\documentclass[reqno,12pt]{amsart} 
\usepackage{mathrsfs}
\usepackage[margin=1in]{geometry}
\usepackage{amssymb,amsmath,amsthm,amsfonts}
\usepackage{latexsym}
\usepackage{amssymb}
\usepackage{amsmath}
\usepackage{amsfonts}
\usepackage{amsthm}
\usepackage{amscd}
\usepackage{texdraw}
\usepackage{color}
\usepackage{cite}
\usepackage[centertags]{amsmath}
\usepackage[colorlinks=true,urlcolor=blue,linkcolor=red,citecolor=red]{hyperref}
\usepackage{cleveref}

\newlength{\defbaselineskip}
\newcommand{\setlinespacing}[1]%
           {\setlength{\baselineskip}{#1 \defbaselineskip}}

\theoremstyle{plain}
\theoremstyle{plain}
\newtheorem{definition}{Definition}[section]
\newtheorem{theorem}[definition]{Theorem}
\newtheorem{lemma}[definition]{Lemma}
\newtheorem{proposition}[definition]{Proposition}
\newtheorem{corollary}[definition]{Corollary}
\newtheorem{remark}[definition]{Remark}

\allowdisplaybreaks
\begin{document}

\title[Wiener type theorems for countable limits of quasi-Beurling algebras]{Wiener type theorems for countable limits of quasi-Beurling algebras and maximizing results on weights}

\author[P. A. Dabhi]{Prakash A. Dabhi}
\address{Department of Basic Sciences, Institute of Infrastructure Technology Research and Management (IITRAM), Maninagar (East), Ahmedabad - 380026, Gujarat, India}
\email{lightatinfinite@gmail.com, prakashdabhi@iitram.ac.in}

\author[K. B. Solanki]{Karishman B. Solanki}
\address{Department of Mathematics, Indian Institute of Technology Ropar, Rupnagar, Punjab - 140001, India}
\email{karishsolanki002@gmail.com, staff.karishman.solanki@iitrpr.ac.in}
%\thanks{}

\subjclass{Primary: 46H05, 46A16, 46A13, 40E05, Secondary: 43A15, 46A45, 47A10, 46L52}

\keywords{Wiener theorem, weight, sequence spaces, Fourier transform, projective limit, inductive limit}

%%% ----------------------------------------------------------------------
\begin{abstract}
We establish the vector-valued Wiener type theorems for countable projective and inductive limits of quasi-Banach algebras in a weighted setting for both finite and infinite dimensional cases. As an application, we extend the notions of rapidly decreasing and exponentially decreasing sequence spaces using quasi-Beurling algebras and show that they are inverse-closed; and obtain a hierarchy of inverse-closed vector-valued algebras using weights. In addition, we derive maximizing results on weights for the nonadmissible weighted version of Wiener's theorem in both discrete and continuous cases.
\end{abstract}
%%% ----------------------------------------------------------------------
\maketitle
\tableofcontents
%%% ----------------------------------------------------------------------

%---------------------------------------------------------------------------
\section{Introduction}
As indicated by the title, the study presented here is for the weighted version of Wiener's classical result \cite{wi} which states that if a continuous function $F:\mathbb{T}\to\mathbb{C}$, where $\mathbb{T}$ is the unit circle, is nowhere zero and has an absolutely convergent Fourier series, then its inverse $\frac{1}{F}$ also has an absolutely convergent Fourier series. This well known result is referred to as \emph{Wiener's theorem}. The convolutive Banach algebra $\ell^1(\mathbb{Z})$ of all absolutely summable sequences on $\mathbb{Z}$ is isomorphic, via the Fourier transform, to the algebra of all continuous functions on $\mathbb{T}$ which have absolutely convergent Fourier series. Thus, Wiener's theorem can be restated as follows: an element $f\in\ell^1(\mathbb{Z})$ is invertible if and only if its Fourier transform $\widehat{f}:\mathbb{T}\to \mathbb{C}$ is nowhere zero. In \cite{Ze}, \.Zelazko obtained its $p$-power analogue for $0<p<1$. Its weighted versions are also investigated and in this direction Domar \cite{Do} showed that the same follows for the weighted algebra $\ell^1_\omega(\mathbb{Z})=\{\{a_n\}_{n\in\mathbb{Z}}:\sum_{n\in\mathbb{Z}}|a_n|\omega(n)<\infty\}$ provided $\omega$ is a non-quasianalytic weight, that is, $\sum_{n\in\mathbb{Z}} \frac{\log \omega(n)}{1+n^2}<\infty$. Note that non-quasianalytic weights are admissible. A weight $\omega$ on $\mathbb{Z}^d$ is \textit{admissible} if it satisfies the \emph{Gel'fand–Ra\v{i}kov–\v{S}hilov (GRS) condition} \cite{GRS}: $\lim_{n\to\infty} \omega(nk)^\frac{1}{n}=1$ for all $k\in\mathbb{Z}^d$; and it is known that Wiener's theorem is true for admissible weights. In fact, the result is two way even in higher dimension and is stated below, see \cite[Theorem 5.24 and Corollary 5.27]{Groadmissible} for proof. A subalgebra $\mathcal{X}$ of $\ell^1(\mathbb{Z}^d)$ is said to be \textit{inverse-closed} if every $f\in\mathcal{X}$ that is invertible in $\ell^1(\mathbb{Z}^d)$ is also invertible in $\mathcal{X}$. 

\begin{theorem} [Wiener-Domar-\.Zelazko-GRS] \label{thm:Wiener-Domar-Zelazko}
For $d\in\mathbb{N}$ and $0<p\leq1$, $\ell^p_\omega(\mathbb{Z}^d)$ is inverse-closed if and only if $\omega$ is an admissible weight on $\mathbb{Z}^d$.
\end{theorem}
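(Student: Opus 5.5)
The plan is to regard $\ell^p_\omega(\mathbb{Z}^d)$, $0<p\le 1$, as a commutative unital $p$-Banach algebra and to identify its Gel'fand space. I take $\omega$ submultiplicative with $\omega\ge 1$, so the $p$-norm $\|f\|=\sum_n |f(n)|^p\omega(n)^p$ satisfies $\|f*g\|\le\|f\|\,\|g\|$ and the algebra sits inside $\ell^1(\mathbb{Z}^d)$. Żelazko's theory of commutative $p$-Banach algebras supplies the tool we need: an element is invertible if and only if no character vanishes on it, and every character is automatically continuous.

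First I determine the characters. A character $\varphi$ is determined by the values $z_j=\varphi(\delta_{e_j})$, and these must satisfy $\varphi(\delta_k)=z^k$. By continuity,
\[
|z^k|^n=|\varphi(\delta_{nk})|\le \|\delta_{nk}\|^{1/p}=\omega(nk).
\]
Taking $n$-th roots and letting $n\to\infty$ shows that $|z^k|\le \rho_k:=\lim_n\omega(nk)^{1/n}$; the limit exists by Fekete's lemma. Conversely, any $z\in(\mathbb{C}^\times)^d$ with $|z^k|\le\rho_k$ for all $k$ gives a character via $f\mapsto\sum_k f(k)z^k$, because $|z^k|\le\omega(k)$. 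So the Gel'fand space is the "generalized annulus"
\[
\Omega_\omega=\{z: |z^k|\le \rho_k \text{ for all } k\in\mathbb{Z}^d\}.
\]
Applying the bound to both $k$ and $-k$ gives $|z^k|\ge 1/\rho_{-k}$, so $\Omega_\omega$ always contains $\mathbb{T}^d$.

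For sufficiency, the GRS condition gives $\rho_k=1$ for all $k$, hence $\Omega_\omega=\mathbb{T}^d$. Now take $f\in\ell^p_\omega$ that is invertible in $\ell^1$. By the classical Wiener theorem, $\widehat f$ has no zero on $\mathbb{T}^d$, so no character of $\ell^p_\omega$ kills $f$, and Żelazko's criterion makes $f$ invertible in $\ell^p_\omega$.

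For necessity, suppose $\omega$ is not admissible, so there is a $k$ with $\rho_k>1$. Using the bound $\omega(nk)\ge\rho_k^n$ I pick $z$ with $|z^k|=\rho_k$; concretely, $z=(r^{k_1},\dots,r^{k_d})$ with $r^{|k|^2}=\rho_k$ satisfies $|z^m|\le\rho_m$ for all $m$. The point is to exhibit an element that is invertible in $\ell^1$ but has a zero in $\Omega_\omega\setminus\mathbb{T}^d$. The test element is
\[
f=\delta_0-c\,\delta_k \quad\text{with } c=\lambda^{-1},\ \lambda=z^k,\ |\lambda|=\rho_k>1.
\]
On $\mathbb{T}^d$ we have $|\widehat f|\ge 1-|c|>0$, so $f$ is invertible in $\ell^1$. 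At the character $z$, however, $\widehat f(z)=1-c\lambda=0$, so $f$ is not invertible in $\ell^p_\omega$, and inverse-closedness fails.

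The main obstacle is the rigorous identification of characters: specifically, verifying that the point $z$ chosen in the necessity step satisfies every constraint $|z^m|\le\rho_m$, not just the one at $m=k$. This can be avoided. Instead, I would show directly that $\lambda$ lies in the spectrum of $\delta_k$ in $\ell^p_\omega$. The spectral radius of $\delta_k$ equals $\lim\|\delta_{nk}\|^{1/(pn)}=\rho_k$, the spectrum is nonempty, and it is stable under rotation (via characters twisted by $\mathbb{T}^d$) and under the symmetry coming from $\delta_{-k}$. Together these yield a spectral value of modulus $\rho_k>1$, and that value can serve as $\lambda$.
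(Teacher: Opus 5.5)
Your argument is correct once one false claim is removed, and it does more than the paper. The paper gives no proof of this statement; it only points to Gr\"ochenig's survey (Theorem 5.24 and Corollary 5.27). Your route is \.Zelazko's Gel'fand theory for commutative $p$-Banach algebras, with the Gel'fand space of $\ell^p_\omega(\mathbb{Z}^d)$ identified as $\{z:|z^m|\le\rho_m \text{ for all } m\}$. It handles every $0<p\le1$ uniformly. The cost is reliance on \.Zelazko's nontrivial results: automatic continuity of characters, the invertibility criterion via characters, and the spectral radius formula. For $p<1$ these cannot be obtained by the usual Hahn--Banach/Liouville argument. The sufficiency half is fine. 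One small correction there: ``$f$ invertible in $\ell^1$ implies $\widehat f\neq0$ on $\mathbb{T}^d$'' is the trivial direction, not Wiener's theorem.

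The sentence ``concretely, $z=(r^{k_1},\dots,r^{k_d})$ with $r^{|k|^2}=\rho_k$ satisfies $|z^m|\le\rho_m$ for all $m$'' is false.
\begin{itemize}
\item Take $d=2$, $\omega(m_1,m_2)=2^{|m_1+m_2|}$ and $k=(1,0)$. Then $\rho_k=2$ and your recipe gives $z=(2,1)$.
\item For $m=(1,-1)$ one has $|z^m|=2>1=\rho_m$, so $z$ is not a character.
\item A valid point here is $z=(2,2)$.
\end{itemize}
So that line must go, and your fallback is what carries the proof. The fallback is simpler than you describe. \.Zelazko's spectral radius formula gives $r(\delta_k)=\lim_n\omega(nk)^{1/n}=\rho_k$. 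Compactness of $\sigma(\delta_k)$ then already yields $\lambda\in\sigma(\delta_k)$ with $|\lambda|=\rho_k$; rotation invariance and the $\delta_{-k}$-symmetry are not needed. Hence $f=\delta_0-\lambda^{-1}\delta_k$ is not invertible in $\ell^p_\omega$. It is invertible in $\ell^1$ by a Neumann series, since $\|\lambda^{-1}\delta_k\|_{\ell^1}<1$.

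In fact the necessity direction needs no spectral theory at all. Fix any $\lambda$ with $1<|\lambda|\le\rho_k$; here $k\neq0$, since $\rho_0=1$.
\begin{itemize}
\item The unique $\ell^1$-inverse of $f=\delta_0-\lambda^{-1}\delta_k$ is $g=\sum_{n\ge0}\lambda^{-n}\delta_{nk}$.
\item The Fekete bound $\omega(nk)\ge\rho_k^n$, which you already note, gives $\|g\|_{\ell^p_\omega}=\sum_{n\ge0}|\lambda|^{-np}\omega(nk)^p\ge\sum_{n\ge0}(\rho_k/|\lambda|)^{np}=\infty$.
\item Since $\ell^p_\omega\subset\ell^1$ and inverses in $\ell^1$ are unique, $f$ has no inverse in $\ell^p_\omega$.
\end{itemize}
Only the sufficiency direction genuinely requires \.Zelazko's theory.
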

In \cite{Bh}, Bhatt and Dedania showed that if $\omega$ is not admissible and $f\in\ell^1_\omega(\mathbb{Z})$ is such that its Fourier transform vanishes nowhere, then $f$ may not be invertible in $\ell^1_\omega(\mathbb{Z})$ but there exists a weight $\nu$ closely related to $\omega$ such that the inverse of $f$ is in $\ell^1_\nu(\mathbb{Z})$. Its $p$-power analogue is obtained in \cite{De} for $0<p<1$, and in \cite{Da} for $1<p<\infty$ putting some conditions on weight $\omega$. The two variable analogue is obtained in \cite{Da2} for $0<p\leq1$. The infinite variable analogue for $p=1$ and without weight is studied by Zagorodnyuk and Mitrofanov in \cite{Zag}. 

All of the above results were concerned with complex valued functions. Since operator and Banach space valued functions are of utmost interest in analysis, vector-valued analogues of Wiener's theorem have also been investigated in literature. Bochner and Phillips, in \cite{Bo}, generalized Wiener's theorem for Banach algebra valued functions and also showed its continuous analogue on the real line. In \cite{kb}, the Banach algebra valued $p$-power weighted analogues of Wiener's theorem for $0<p<\infty$ and its continuous analogue for $1\leq p<\infty$ are obtained. For $0<p\leq1$, the multivariate, both finite and infinite, $p$-power analogues of it are established in \cite{kb2}. Note that the results in \cite{kb,kb2} cover all the results mentioned above and are stated in Section~\ref{subsec:known analogues of Wiener's theorem}. It is important to mention here that these results were established using the classical Banach algebra techniques developed by Gel'fand et. al., we refer the reader to \cite{GRS, Kan} for an exposition.

Weights are very useful in analysis as they are beneficial in many situations, for example, to quantify the growth and decay conditions. Specifically, weights are used to control or ignore oscillations of a function at infinity to analyze its asymptotic behavior or its behavior around a certain point. The most notable examples are the polynomial weights $\omega_s(n)=(1+|n|^2)^\frac{s}{2}$ and the subexponential weights $\omega(n)=e^{|n|^\gamma}$, for $\gamma\in(0,1)$, on $\mathbb{Z}^d$, as well as their extensions to $\mathbb{R}^d$. These weights are widely used in analysis, particularly in time-frequency analysis and in the study of linear and nonlinear PDEs; see, for example, \cite{GroWeight2007, Groadmissible, grafakos2008modern, reich2015nonanalytic, PelinovskyStefanov, Cuccagna} and references therein.

The vector-valued and weighted analogues of Wiener type theorems have been studied extensively in the literature, refer to \cite{Bo, De, Da, Da2, Bh, Krishtal2011, kb, kb2, Zag, Ze} and references therein; of course, the list is not exhaustive. The reason is that it has many applications in various fields of analysis such as signal theory, frame theory, sampling theory, Gabor analysis, etc.; see \cite{Badi,Ba,kb3,Gro,Gr, Bal25,FeiWeight1979, GroWeight2007, Ku, Fageotcountablelimits, Sun2007, Sun2011, SUN2005567, ShinSun2013} and references therein; the list is by no means exhaustive.

Here, we are interested in Wiener type theorems for the countable projective and inductive limits of Banach algebra valued quasi-Banach algebras in weighted setting. Recently, such spaces were studied in weighted setting for admissible weights by Fern\'andez, Galbis, and Toft in \cite{FernandezGalbisToft2014,FernandezGalbisToft2015}.  In \cite{Fageotcountablelimits}, Fageot et al. studied the inverse-closedness of nuclear algebras (algebras without the norm structure): the spaces of rapidly and exponentially decreasing sequences, using algebras of the following types: 
\begin{itemize}
    \item Type-I: countable projective limit of Banach spaces  $\bigcap_{n\in\mathbb{N}}\ell^1_{\omega_n}(\mathbb{Z}^d),$ where $\{\omega_n\}_{n\in\mathbb{N}}$ is a family of weights such that $\omega_n\leq\omega_{n+1}$;
    
    \item Type-II: countable inductive limit of Banach spaces $\bigcup_{n\in\mathbb N} \ell^1_{\omega_n}(\mathbb Z^d)$, where $\{\omega_n\}_{n\in\mathbb{N}}$ is a family of weights such that $\omega_{n+1}\leq\omega_n$.
\end{itemize}
%Nuclear spaces were first introduced in \cite{Grothendieck1955} to better understand aspects of functional analysis beyond Banach spaces. 
The projective and inductive limits of Banach spaces are important for the study of infinite matrices dominated by convolution kernels, particularly in the field of signal and image processing; and in the areas of approximation and sampling theory. The quasi-normed spaces also have several applications in the fields of  coorbit theory and time-frequency analysis, we refer the reader to \cite{Rauhut2007Wiener, Rauhut2007Coorbit, grafakos2008modern, BastianoniCordero2022, GrochenigPfeufferToft2024} and references therein.  This, together with the importance of weights, vector-valued sequences, and various versions of Wiener’s theorem in analysis as mentioned earlier, motivate the present study.

From Theorem~\ref{thm:Wiener-Domar-Zelazko}, it follows that an algebra of Type-I is inverse-closed if and only if each weight $\omega_n$ satisfies the GRS condition. The study in \cite{Fageotcountablelimits} treated the family of weights $\{\omega_n\}_{n\in\mathbb{N}}$ for algebra of Type-II with each $\omega_n$ not satisfying the GRS condition, but imposed the extended GRS condition to remain within the realm of inverse-closedness. A family of weights $\{\omega_n\}_{n\in\mathbb{N}}$ on $\mathbb{Z}^d$ satisfies the \textit{extended GRS condition} \cite{FernandezGalbisToft2014} if
\begin{align} \label{def:extended GRS condition}
    \inf_{n\in\mathbb{N}} \left( \lim_{k\to\infty} \omega_n(km)^\frac{1}{k} \right) =1 \quad \text{for all} \quad m\in\mathbb{Z}^d.
\end{align}   
In particular, they proved the following result.
\begin{theorem} \cite[Theorem 20]{Fageotcountablelimits} \label{thm:Fageot et. al. Type-II}
An algebra of Type-II: $\bigcup_{n\in\mathbb N} \ell^1_{\omega_n}(\mathbb Z^d)$ is inverse-closed if and only if the family of weights $\{\omega_n\}_{n\in\mathbb{N}}$ on $\mathbb{Z}^d$ satisfies the extended GRS condition.
\end{theorem}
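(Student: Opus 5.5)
The plan is to reduce inverse-closedness of $\mathcal{A}=\bigcup_{n}\ell^1_{\omega_n}(\mathbb{Z}^d)$ to two facts: the Gel'fand spectral radius formula in each Banach algebra $\ell^1_{\omega_n}(\mathbb{Z}^d)$, and an identification of the Gel'fand spectrum of $\ell^1_{\omega}(\mathbb{Z}^d)$ with the compact Reinhardt-type set
\[
K_\omega=\{z\in(\mathbb{C}\setminus\{0\})^d:\ |z^m|\le\rho_\omega(m)\ \text{for all } m\in\mathbb{Z}^d\},\qquad \rho_\omega(m)=\lim_{k\to\infty}\omega(km)^{1/k}.
\]
Here $\omega$ is submultiplicative, so $\rho_\omega(m)$ exists and satisfies $\rho_\omega(m)\rho_\omega(-m)\ge1$, and we may assume $\omega\ge1$. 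A character is evaluation of the $z$-transform $\sum_m a_m z^m$ at a point $z\in K_\omega$. Since $\omega_{n+1}\le\omega_n$, the inclusions $\ell^1_{\omega_n}\subseteq\ell^1_{\omega_{n+1}}$ are continuous, so $\rho_{\omega_{n+1}}\le\rho_{\omega_n}$ and the compact sets $K_{\omega_n}$ decrease. Their intersection $K_\infty=\bigcap_n K_{\omega_n}$ is the set of $z$ with $|z^m|\le\inf_n\rho_{\omega_n}(m)$ for all $m$.

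\emph{Sufficiency.} Assume the extended GRS condition \eqref{def:extended GRS condition}. Then $|z^m|\le1$ for every $m$ and every $z\in K_\infty$; applying this to $\pm m$ gives $K_\infty=\mathbb{T}^d$. Let $f\in\ell^1_{\omega_N}(\mathbb{Z}^d)$ be invertible in $\ell^1(\mathbb{Z}^d)$, so $\widehat f$ has no zeros on $\mathbb{T}^d$. The $z$-transform $F(z)=\sum f_m z^m$ is continuous on $K_{\omega_N}$, so its zero set $Z$ is a compact subset of $K_{\omega_N}$ disjoint from $\mathbb{T}^d=\bigcap_{n\ge N}K_{\omega_n}$. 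By compactness and the decreasing property, $Z\cap K_{\omega_n}=\emptyset$ for some $n\ge N$. Then $F$ has no zeros on $K_{\omega_n}$, which is the maximal ideal space of $\ell^1_{\omega_n}$, and Gel'fand theory makes $f$ invertible in $\ell^1_{\omega_n}\subseteq\mathcal{A}$. This inverse coincides with the $\ell^1$-inverse.

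\emph{Necessity.} Suppose some $m_0\ne0$ has $\inf_n\rho_{\omega_n}(m_0)=r>1$. Consider $f=\delta_0-\lambda\delta_{m_0}$ with $1<|\lambda|^{-1}<r$, that is, $1/r<|\lambda|<1$. Its Fourier transform $1-\lambda e^{i\langle m_0,\theta\rangle}$ has no zeros, and its $\ell^1$-inverse is $\sum_{k\ge0}\lambda^k\delta_{km_0}$. Membership in $\ell^1_{\omega_n}$ requires $\sum_k|\lambda|^k\omega_n(km_0)<\infty$. But $\omega_n(km_0)^{1/k}\to\rho_{\omega_n}(m_0)\ge r>|\lambda|^{-1}$, so the series diverges for every $n$. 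Hence the inverse is not in $\mathcal{A}$, and $\mathcal{A}$ is not inverse-closed.

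\emph{Main obstacle.} The hard step is the spectrum identification for a general, possibly non-symmetric, submultiplicative weight on $\mathbb{Z}^d$. For necessity, only the elementary series computation is needed. For sufficiency, we need that every point of $K_\omega$ gives a character and that every character has this form. A character $\varphi$ is determined by $z_j=\varphi(\delta_{e_j})$, and $|\varphi(\delta_m)|^k=|\varphi(\delta_{km})|\le\omega(km)$ yields $|z^m|\le\rho_\omega(m)$. The converse, that each such $z$ gives a bounded character, needs $|z^m|\le C\,\omega(m)$. This follows from $\rho_\omega(m)=\inf_k\omega(km)^{1/k}\le\omega(m)$, by Fekete's lemma, which holds because $\omega$ is submultiplicative. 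With this, the argument above is complete.
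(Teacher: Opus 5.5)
Your proof is correct, and there is no in-text proof to compare it with. The paper quotes this statement from \cite{Fageotcountablelimits} and uses it as a black box: for the ``particular case'' of Theorem~\ref{thm:inductive limit d=1}, and as the ``technique'' invoked for Theorem~\ref{thm:inverse-closedness in higher dimension}. Your route is commutative Gel'fand theory, in four steps:
\begin{enumerate}
\item the maximal ideal space of $\ell^1_\omega(\mathbb{Z}^d)$ is the compact log-convex Reinhardt set $K_\omega$;
\item the extended GRS condition forces $\bigcap_n K_{\omega_n}=\mathbb{T}^d$;
\item a Cantor-intersection argument on the compact zero set of the $z$-transform places the inverse in a single $\ell^1_{\omega_n}$;
\item necessity is witnessed by the explicit element $\delta_0-\lambda\delta_{m_0}$.
\end{enumerate}
The paper's own tools elsewhere are the annuli $\Gamma(\rho_{1,\omega},\rho_{2,\omega})$, their products, and Theorems~\ref{thm1}--\ref{thi}. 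Working with the full set $K_\omega$ instead is what lets you handle every $d$ without side conditions such as property $\ast_\omega$. Incidentally, $K_\omega$ is exactly the explicit Gel'fand space that the paper's concluding questions ask about in dimension $d\ge2$, and it need not be a product of annuli. For $\omega(m_1,m_2)=e^{|m_1+m_2|}$ one gets $K_\omega=\{|z_1|=|z_2|\in[e^{-1},e]\}$. Your argument also transfers essentially verbatim to scalar unions $\bigcup_n\ell^{p_n}_{\omega_n}(\mathbb{Z}^d)$ with $0<p_n\le1$. The characters of $\ell^p_\omega(\mathbb{Z}^d)$ are given by the same points of $K_\omega$, and \.Zelazko's theory for $p$-Banach algebras still detects invertibility by characters. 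This covers the scalar case of part (ii) of Theorem~\ref{thm:inverse-closedness in higher dimension}, which the paper only asserts.

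The steps you leave implicit are routine. $K_\omega$ is compact because it is closed and satisfies $\rho_\omega(-e_j)^{-1}\le|z_j|\le\rho_\omega(e_j)$. In the necessity part, $\delta_0-\lambda\delta_{m_0}$ is finitely supported, so it lies in the union, and any inverse there must equal its $\ell^1$-inverse. Finally, avoid calling the union $\mathcal{A}$, since the paper reserves that letter for the coefficient algebra.
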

Here, we go beyond this framework by not imposing the GRS or the extended GRS conditions. As stated earlier, the vector-valued weighted analogues of Wiener's theorem even for nonadmissible weights are known. Here, we prove similar results for algebras of Type-I and Type-II, where the weights need not satisfy the GRS or extended GRS condition. Also, we extend the analysis to the infinite dimensional case, a scenario not examined in \cite{Fageotcountablelimits}; and consider even more generalized form of these algebras as per the definition given below.

For the infinite dimensional case, we use the set of all finitely supported sequences of integers over $\mathbb{N}$, denoted by $\mathbb{Z}^\mathbb{N}$, that is, an element $\alpha$ of $\mathbb{Z}^\mathbb{N}$ is of the form 
\begin{align*}
    \alpha=(\alpha_1,\alpha_2,\dots,\alpha_n,0,0,0,\dots), \quad \text{where} \quad n\in\mathbb{N} \quad \text{and} \quad \alpha_i\in\mathbb{Z} \quad (1\leq i \leq n).
\end{align*}

\begin{definition} \label{def:Type-I and Type-II}
Let $\mathcal{A}$ be a unital Banach algebra. For $\mathbb{X}\in\{\mathbb{Z}^d,\mathbb{Z}^\mathbb{N}\}$, where $d\in\mathbb{N}$, we define algebras of 
\begin{itemize}
    \item Type-I: countable projective limit of (quasi-)Banach algebras 
    $$\bigcap_{n\in\mathbb{N}}\ell^{p_n}_{\omega_n}(\mathbb{X},\mathcal{A}),$$ 
    where $\{p_n\}_{n\in\mathbb{N}}$ is a decreasing sequence in $(0,1]$ and $\{\omega_n\}_{n\in\mathbb{N}}$ is a family of weights on $\mathbb{X}$ such that $\omega_n\leq\omega_{n+1}$ for all $n\in\mathbb{N}$.

    \item Type-II: countable inductive limit of (quasi-)Banach algebras
    $$\bigcup_{n\in\mathbb{N}}\ell^{p_n}_{\omega_n}(\mathbb{X},\mathcal{A}),$$ 
    where $\{p_n\}_{n\in\mathbb{N}}$ is an increasing sequence in $(0,1]$ and $\{\omega_n\}_{n\in\mathbb{N}}$ is a family of weights on $\mathbb{X}$ such that $\omega_n\geq\omega_{n+1}$ for all $n\in\mathbb{N}$.
\end{itemize}
\end{definition}

The above definition is justified by Lemma~\ref{lem:weighted Fourier algebras inclusions}. To the best of authors' knowledge, these spaces have not been studied in the existing literature. As we require only the algebraic structure, we do not discuss the topological aspects of these algebras. However, it is worth noting that there is no (quasi-)norm structure on the above algebras, and that the quasi-normed spaces appearing in the union and intersection are not locally convex, unlike the normed spaces typically considered in standard definition. But these spaces possess well-defined topologies induced by the corresponding quasi-norms in place of norms. Moreover, the algebra $\mathcal{A}$ need not be commutative, placing us in the realm of noncommutative algebras. This makes this investigation more interesting. The algebras of Type-I and Type-II are also referred as \textit{Fr\'echet algebra} and \textit{LF-algebra}, respectively.

\begin{remark} We consider $\{p_n\}$ to be a monotone sequence, not necessarily strictly monotone. For instance; taking $p_n=1$ and $\mathcal{A}=\mathbb{C}$, we recover the spaces considered in \cite{Fageotcountablelimits,FernandezGalbisToft2014,FernandezGalbisToft2015}. The following remarks highlight some useful observations regarding the above definition.
\begin{enumerate}
    \item If we take $p_n\in(0,1)$, these algebras can be seen as ``perfect quasi-spaces'' in the sense that each member of the limits are quasi-Beurling algebras. 
    
    \item Taking $\{p_n\}$ to be strictly monotone allows one to consider the case where all weights are equal, including the trivial weight, and even then the algebras are well defined, which was not the case earlier. Similarly, by choosing the family of distinct weights, one can fix $p_n=p\in(0,1]$.
    
    \item If $\{p_n\}$ for an algebra of Type-I is such that $\lim_{n\to\infty}p_n\to0$, then it would contain only sequences with finite support; hence, such cases shall be avoided.
    
    \item It is worth highlighting that the case where all weights are not admissible is particularly important, and addressing this scenario adds to the interest and novelty of the present study. In fact, it is the highlight of the work presented here.
\end{enumerate}
\end{remark}

Further, for $\mathbb{X}=\mathbb{Z}$, analogous results for the non-quasi case, that is, when $\{p_n\}_{n\in\mathbb{N}}$ is a monotone sequence in $(1,\infty)$ with all other assumptions as is, are obtained under suitable additional conditions on the weights in the associated family (Theorem~\ref{thm:p>1, proj limit d=1} and Theorem~\ref{thm:p>1 inductive limit d=1}). These results are presented at the end of the paper so as not to disrupt the overall flow.

The results presented here are expected to have applications in the relevant areas of analysis mentioned above. As an immediate application, we extend our work to the vector-valued spaces of rapidly decreasing and exponentially decreasing sequences by defining them as projective and inductive limits of quasi-normed spaces in place of normed spaces and establish their inverse-closedness. We note that the weights used in the definition of these spaces are polynomial and exponential. Considering the importance of these weights along the subexponential weights in various areas of mathematics, we obtain a chain of inclusion showing the hierarchy of inverse-closed vector-valued algebras in quasi-Beurling framework.

%%%%%%%%%%%%%%%%%%%%%%%%%%%%%%%%%%%%%%%%%%%%%%%%%%%%%
The second part of the paper concerns the existence of the smallest and largest ($p$-Beurling) algebras between $\ell^p_\omega(\mathbb{Z},\mathcal{A}) \subset \ell^1(\mathbb{Z},\mathcal{A})$ that contain an inverse of $f\in\ell^p_\omega(\mathbb{Z},\mathcal{A})$, where $0<p\leq1$ and $\omega$ is a weight on $\mathbb{Z}$. If $\omega$ is admissible, then the conclusion is trivial since $\ell^p_\omega(\mathbb{Z},\mathcal{A})$ and $\ell^1(\mathbb{Z},\mathcal{A})$ are the required algebras. In fact, the existence of the largest such algebra is always trivial because of the vector-valued analogue of Wiener's theorem. The case of interest is when $\omega$ is not admissible and $f$ is invertible in $\ell^1(\mathbb{Z},\mathcal{A})$, but not in $\ell^p_\omega(\mathbb{Z},\mathcal{A})$. In this case, we know that there exists a weight $\nu$ such that $\nu\leq\omega$, $\nu$ is constant (nonconstant) if  $\omega$ is constant (nonconstant) and $f$ is invertible in $\ell^p_\nu(\mathbb{Z},\mathcal{A})$ (see Theorem~\ref{thm1}); and in fact, there are infinitely many such weights. Let $\nu'$ be another such weight with $\nu\leq \nu'\leq\omega$. Then we have the following chain of inclusions, from Lemma~\ref{lem:weighted Fourier algebras inclusions},
\begin{align*}
    \ell^p_\omega(\mathbb{Z},\mathcal{A}) \subset \ell^p_{\nu'}(\mathbb{Z},\mathcal{A}) \subset \ell^p_\nu(\mathbb{Z},\mathcal{A}) \subset \ell^1(\mathbb{Z},\mathcal{A})
\end{align*}
with infinitely many such algebras in between. Specifically, the question is to find the smallest algebra of the form $\ell^p_\eta(\mathbb{Z},\mathcal{A})$ in the above chain that contains an inverse of $f$ because if $p<1$  and $q\in(p,1)$, then $f$ will have an inverse in $\ell^p(\mathbb{Z},\mathcal{A}) \subset \ell^q(\mathbb{Z},\mathcal{A})$ by Theorem~\ref{thm:Wiener-Domar-Zelazko}, and so it makes sense to fix the $p$ and work on the weights $\nu$ to find the smallest such algebra, which is further justified by the case of $p=1$ as in that case we have no other option than to change the weight. Due to it, the question can now be equivalently stated as whether the weight $\nu$ obtained above can be maximized and minimized. The above problem of finding the smallest algebra corresponds to the maximizing case, while the minimizing case is related to finding the largest such algebra for which the answer is the trivial weight $\nu\equiv1$ as seen before. The case of maximizing or obtaining the smallest algebra is not obvious and is not true in general. Here, we find this maximized weight that settles the above questions in the sense that its corresponding algebra may not be the smallest algebra that contains the inverse of $f$ but it will be the largest in the above chain that does not contain an inverse of $f$. A similar result is also obtained for the continuous case, Theorem~\ref{thr}.

%%%%%%%%%%%%%%%%%%%%%%%%%%%%%%%%%%%%%%%%%%%%%%%%%%
The paper is structured as follows. In Section~\ref{sec:pre}, we establish the notation and provide the requisite definitions, along with some known results. The technical lemmas required for our analysis are provided in Section~\ref{sec:lemmas}. Section~\ref{sec:CLBS} is devoted to Wiener type theorems for countable projective and inductive limits of (quasi-)Beurling algebras, namely, algebras of Type-I and Type-II, and this section is further divided into three subsections: \ref{subsec:d=1}, \ref{subsec:d=2} and \ref{subsec:d=infty} to deal with the case of one dimension, finite dimension, and infinite dimension, respectively. In Section~\ref{sec:application}, we show that vector-valued spaces of rapidly decreasing sequences and exponentially decreasing sequences defined in quasi setting are inverse-closed and obtain a hierarchy of inverse-closed algebras. The results on maximization of the weights are presented in Section~\ref{sec:Maximizing weight} for both the discrete and continuous case. In Section~\ref{sec:concluding remarks}, we establish Wiener type theorems for algebras of Type-I and Type-II in one dimension with $\{p_n\}_{n\in\mathbb{N}}\subset(1,\infty)$, followed by concluding remarks and some open questions.

\section{Preliminaries} \label{sec:pre}

First, we set the notation and give some basic definitions. 

Here, $\mathbb{Z}$ denotes the set of integers, $\mathbb{N}$ denotes the set of positive integers, $\mathbb{R}$ denotes the set of real numbers, $\mathbb{C}$ denotes the set of complex numbers, $\mathbb{T}=\{z\in\mathbb{C}:|z|=1\}$ denotes the unit circle in $\mathbb{C}$, $B(z,\epsilon)=\{z'\in\mathbb{C}:|z-z'|<\epsilon\}$ denotes the open ball in $\mathbb{C}$ with center $z$ and radius $\epsilon>0$, and for $0\leq a\leq b,\ \Gamma(a,b)=\{z\in\mathbb{C}:a\leq |z|\leq b\}$ denotes the closed annulus. 
For $d\in\mathbb{N}$, $\mathbb{X}^d$ denotes the $d$-copies of a set $\mathbb{X}$, that is, $\mathbb{X}^d=\{(x_1,x_2,\dots,x_d):x_i\in \mathbb{X} \,\ (1\leq i\leq d)\}$. 
%The symbol $\mathcal{X}_{1}\hookrightarrow \mathcal{X}_{2}$ denotes the continuous embedding of the topological linear space $\mathcal{X}_{1}$ into $\mathcal{X}_{2}$.

The following is an important inequality which will be used frequently.
\begin{align} \label{eq:ineqp<1}
    \text{If } 0<p\leq1 \text{ and } a,b\geq0, \text{ then } (a+b)^p\leq a^p +b^p.
\end{align}

\begin{definition}\cite{Ze}
Let $0<p\leq1$, and let $\mathcal{A}$ be an algebra. 
    \begin{itemize}
        \item A mapping $\|\cdot\| : \mathcal{A} \to [0,\infty)$ is a \emph{$p$-norm} on $\mathcal{A}$ if the following conditions hold for all $x,y\in \mathcal{A}$ and $\alpha \in \mathbb{C}$.
        \begin{enumerate}
        \item $\|x\|=0$ if and only if $x=0$.
        \item $\|x+y\|\leq\|x\|+\|y\|$.
        \item $\|\alpha x\|= |\alpha|^p \|x\|$.
        \item $\|xy\|\leq\|x\|\|y\|$.
    \end{enumerate}
    
    \item The algebra $\mathcal{A}$ along with a $p$-norm $\|\cdot\|$ is a \emph{$p$-normed algebra}. 
    
    \item If $\mathcal{A}$ is complete in the $p$-norm, then $(\mathcal{A},\|\cdot\|)$ is a \emph{$p$-Banach algebra}. 
    
    \item When $p=1$, the map $\|\cdot\|$ is a \emph{norm} on $\mathcal{A}$, and $(\mathcal{A},\|\cdot\|)$ is a \emph{normed 
    algebra} or a \emph{Banach algebra} if it is complete. 
    
    \item When $0<p<1$, the $p$-norm and $p$-normed (Banach) algebra are also known as \emph{quasi-norm} and \emph{quasi-normed (Banach) algebra}, respectively.
\end{itemize}
\end{definition}

\begin{definition}\cite{Kan}
Let $\mathcal{A}$ be an algebra without unit element, and let $\mathcal{A}_\mathbf{1}=\mathcal{A}\times\mathbb{C}=\{(a,\alpha):a\in\mathcal{A}, \alpha\in\mathbb{C}\}$. 
Then $\mathcal{A}_\mathbf{1}$ is a unital algebra with the operations defined as follows:
\begin{enumerate}
\item $(a,\alpha)+(b,\beta)=(a+b,\alpha+\beta)$,
\item $\beta(a,\alpha)=(\beta a,\beta\alpha)$,
\item $(a,\alpha)(b,\beta)=(ab+\alpha b+\beta a,\alpha\beta)$,
\end{enumerate}
for all $a,b\in\mathcal{A}$ and $\alpha,\beta\in\mathbb{C}$. The algebra $\mathcal{A}_\mathbf{1}$ is the \emph{unitisation} of $\mathcal{A}$ and the process is known as \emph{adjoining a unit} to $\mathcal{A}$. The element $\mathbf{1}=(0,1)$ is the unit element of $\mathcal{A}_\mathbf{1}$. The element $(a,\alpha)$ of $\mathcal{A}_\mathbf{1}$ is also written as $a+\lambda\mathbf{1}$ or $\lambda\mathbf{1}+a$.
\end{definition}

Throughout the paper, we fix the following unless otherwise stated.
\begin{enumerate}
    \item $\mathcal{A}$ is a unital Banach algebra. 
    \item $\|\cdot\|$ denotes the norm of $\mathcal{A}$.
    \item $1_\mathcal{A}$ denotes the unit or identity of $\mathcal{A}$, that is, $a1_\mathcal{A}=a=1_\mathcal{A}a$ for all $a\in\mathcal{A}$.
    \item All algebras considered are complex.
    \item $d$ is a positive integer.
\end{enumerate}

\subsection{Weights}
Let $\mathbb{X}\in\{\mathbb{Z}^d,\mathbb{Z^\mathbb{N},\mathbb{R}\}}$. A \emph{weight} on $\mathbb{X}$ is a (Borel measurable, if $\mathbb{X}=\mathbb{R}$) map $\omega:\mathbb{X} \to [1,\infty)$ which satisfies 
\begin{align} \label{def:weight}
    \quad \omega(x+y)\leq \omega(x)\omega(y) \quad \text{for all} \quad x,y \in \mathbb{X}.
\end{align}

Next, we define some technical quantities for these weights that facilitate the study of the Gel'fand spaces and the construction of the required weights; see \cite{Bh, Da2, kb2} for details.

\begin{itemize}
    \item For a weight $\omega$ on $\mathbb{Z}$, define 
        \begin{align} \label{def:rho for Z}
            \rho_{1,\omega}=\sup\{\omega(n)^{1/n}:n \leq 0 \} \leq 1 \leq \rho_{2,\omega}=\inf\{\omega(n)^{1/n}:n \geq 0 \}.
        \end{align} 

    \item For a weight $\omega$ on $\mathbb{Z}^2$, define 
        \begin{align} \label{def:rho for Z^2} \nonumber
            \rho_{1,\omega}=\sqrt{\sup_{(m_1,m_2)\in\mathbb{N}\times\mathbb{Z}}\omega(-m_1,m_2)^{-\frac{1}{m_1}}} \leq 1 \leq \rho_{2,\omega}=\sqrt{\inf_{(m_1,m_2)\in\mathbb{N}\times\mathbb{Z}}\omega(m_1,m_2)^{\frac{1}{m_1}}}, \\ 
            \mu_{1,\omega}=\sqrt{\sup_{(m_1,m_2)\in\mathbb{Z}\times\mathbb{N}}\omega(m_1,-m_2)^{-\frac{1}{m_2}}} \leq 1 \leq \mu_{2,\omega}=\sqrt{\inf_{(m_1,m_2)\in\mathbb{Z}\times\mathbb{N}}\omega(m_1,m_2)^{\frac{1}{m_2}}}.
        \end{align}

    \item For a weight $\omega$ on $\mathbb{Z}^\mathbb{N}$ and $i\in\mathbb{N}$, define
        \begin{align} \label{def:rho for Z^infty} \nonumber
            &\rho_{i,\omega}=\sup \left\{\omega(\alpha_1,\alpha_2,\dots,\alpha_{i-1},-\alpha_i,\alpha_{i+1},\dots)^{-\frac{1} {\alpha_i}}:\alpha_i\in\mathbb{N},\alpha_j\in\mathbb{Z}\ (j\neq i) \right\} \\
            \text{and} \quad &\mu_{i,\omega}=\inf \left\{\omega(\alpha_1,\alpha_2,\dots,\alpha_{i-1},\alpha_i,\alpha_{i+1},\dots)^{\frac{1}{\alpha_i}}:\alpha_i\in\mathbb{N},\alpha_j\in\mathbb{Z}\ (j\neq i) \right\}.
        \end{align}
    Then $\rho_{i,\omega}\leq1\leq\mu_{i,\omega}$ for all $i\in\mathbb{N}$. 
\end{itemize}

It is known that a weight $\omega$ on $\mathbb{Z}$ is admissible if and only if $$\rho_{1,\omega}=1=\rho_{2,\omega}.$$

\begin{definition}
For convenience, we adopt the following definitions.
\begin{itemize}
    \item A weight $\omega$ on $\mathbb{Z}^2$ is \emph{admissible} if $\rho_{1,\omega}=\rho_{2,\omega}=\mu_{1,\omega}=\mu_{2,\omega}=1$.

    \item For a weight $\omega$ on $\mathbb{Z}^\mathbb{N}$, define the set 
    \begin{align} \label{def:S_omega}
        S_\omega=\{i\in\mathbb{N}:\mu_{i,\omega} \neq 1 \ \text{or}\ \rho_{i,\omega}\neq1\}.
    \end{align}
    \begin{enumerate}
        \item $\omega$ is admissible if $S_\omega$ is empty, that is, $\mu_{i,\omega} =1=\rho_{i,\omega}$ for all $i\in\mathbb{N}$.

        \item $\omega$ is $m$-nonadmissible for $m\in\mathbb{N}$ if the cardinality of the set $S_\omega$ is $m$.

        \item $\omega$ is $\infty$-nonadmissible if $S_\omega$ is infinite.
    \end{enumerate}
\end{itemize}
\end{definition}

Note that if a weight $\omega$ on $\mathbb{Z}^2$ or $\mathbb{Z^N}$ satisfies the GRS condition, then it is admissible in our sense. But the other way around may not be true. See \cite{DedaniaGoswami2022, Da2} for more details for the case of two variables.

A weight $\omega$ on $\mathbb{Z}^2$ \emph{satisfies the property $\ast_\omega$} \cite{Da2} if any of the following two conditions is satisfied. 
\begin{enumerate}
    \item $\rho_{1,\omega}=\rho_{2,\omega}=\mu_{1,\omega}=\mu_{2,\omega}=1$, and either $\omega(m_1,-m_2)=\omega(m_1,m_2)$ for all $(m_1,m_2)\in\mathbb{Z}^2$ or $\omega(-m_1,m_2)=\omega(m_1,m_2)$ for all $(m_1,m_2)\in\mathbb{Z}^2$.
    
    \item $\sup\{\omega(m_1,m_2)^\frac{1}{m_1}\omega(m_1,-m_2)^\frac{1}{m_1}:m_1<0,m_2\in\mathbb{Z}\}=\inf\{\omega(m_1,m_2)^\frac{1}{m_1}\omega(m_1,-m_2)^\frac{1}{m_1}:m_1\in\mathbb{N},m_2\in\mathbb{Z}\}$ and \\$\sup\{\omega(m_1,m_2)^\frac{1}{m_2}\omega(-m_1,m_2)^\frac{1}{m_2}:n<0,m\in\mathbb{Z}\}=\inf\{\omega(m_1,m_2)^\frac{1}{m_2}\omega(-m_1,m_2)^\frac{1}{m_2}:m_2\in\mathbb{N},m_1\in\mathbb{Z}\}$.
\end{enumerate}

For $n\in\mathbb{N}$, let  $0<r_j\leq 1\leq s_j < \infty$ ($1\leq j \leq n$), and let $i_1<i_2<\dots <i_n$ be positive integers. Define a weight $\omega$ on $\mathbb{Z}^\mathbb{N}$ by 
\begin{align*}
\omega(\alpha)=\prod_{j=1}^n h_j^{\alpha_{i_j}}, \quad  \text{where} \quad 
    h_j=\begin{cases}
    r_j, \ \text{if} \ \alpha_{i_j}\leq 0 \\ s_j, \ \text{if} \ \alpha_{i_j}\geq 0    
    \end{cases}
     \text{and} \,\, \alpha_j\in\mathbb{Z} \ \text{if} \,\, j\notin\{i_1,i_2,\dots,i_l\}.    
\end{align*}
Since $\omega$ depends only on $n$ variables at the $i_1,i_2,\dots,i_n$ positions, we refer to such weights as \emph{weights of order $n$}.

Lastly, for a weight $\omega$ on $\mathbb{R}$, define
\begin{align} \label{eq:rho for R}
    \rho_{1,\omega}=\sup\{\log\omega(x)^{1/x}:x<0\} \leq 0 \leq \rho_{2,\omega}=\inf\{\log\omega(x)^{1/x}:x>0\}.
\end{align} 
A weight $\omega$ on $\mathbb{R}$ is \emph{admissible} if $\rho_{1,\omega}=0=\rho_{2,\omega}$.

\subsection{Quasi-Beurling algebras}
Let $\mathbb{X} \in \{\mathbb{Z}^d,\mathbb{Z}^\mathbb{N},\mathbb{R}\}$ be considered as a group with addition, and let $\mu$ be the Lebesgue measure when $\mathbb{X}=\mathbb{R}$ and the counting measure when $\mathbb{X} \in \{\mathbb{Z}^d,\mathbb{Z}^\mathbb{N}\}$. For $0< p \leq 1$, a weight $\omega$ on $\mathbb{X}$ and a unital Banach algebra $\mathcal{A}$, let 
\begin{align*}
    L^p_\omega(\mathbb{X},\mathcal{A})=\left\{f: \mathbb{X} \to \mathcal{A}: \|f\|_{L^p_\omega}=\int_\mathbb{X} \|f(x)\|^p\omega(x)^p d\mu(x)<\infty \right\}.
\end{align*}
When $\mathbb{X}=\mathbb{R}$, the functions are further supposed to be strongly measurable.
For $f, g \in L^p_\omega(\mathbb{X},\mathcal{A})$, define \emph{convolution} by $$(f\star g)(x)=\int_\mathbb{X} f(x-y)g(y) d\mu(y) \quad (x\in \mathbb{X}).$$ 

The space $L^1_\omega(\mathbb{X},\mathcal{A})$ is always closed under convolution and a Banach algebra with the norm $\|\cdot\|_{L^1_\omega}$. If $0<p<1$, then in \cite[Theorem 2]{De}, it is shown that it is closed under convolution and a $p$-Banach algebra with the $p$-norm $\|\cdot\|_{L^p_\omega}$ if and only if $\mathbb{X}$ is discrete, and so we only consider $\ell^p_\omega(\mathbb{Z}^d,\mathcal{A})$ and $\ell^p_\omega(\mathbb{Z}^\mathbb{N},\mathcal{A})$ when $p<1$. These weighted ($p$-)Banach algebras are known as \textit{($p$-)Beurling algebras} or \textit{(quasi-)Beurling algebras}. Since they are algebras with convolution as a product, they are also referred to as \emph{convolutive algebras}.

Also, note that $\ell^p_\omega(\mathbb{Z}^d,\mathcal{A})$ and $\ell^p_\omega(\mathbb{Z}^\mathbb{N},\mathcal{A})$ are unital algebras with the unit or identity element given by $\delta_0(\mathbf0)=1_\mathcal{A}$ and zero otherwise, where $\mathbf{0}$ is the zero element of $\mathbb{Z}^d$ and $\mathbb{Z}^\mathbb{N}$. But $L^1_\omega(\mathbb{R},\mathcal{A})$ does not have an identity and so we consider the unitisation of $L^1_\omega(\mathbb{R},\mathcal{A})$ by adjoining the unit $\mathbf1$ and denoted by $L^1_\omega(\mathbb{R},\mathcal{A})_\mathbf1$.

For $f\in \ell^1(\mathbb{Z}^d,\mathcal{A})$, the \textit{Fourier transform} of $f$, $\widehat f:\mathbb{T}^d\to\mathcal{A}$, is defined as 
\begin{align*} %\label{def:FT of f in l1(Z^d)}
    \widehat{f}(z)=\sum_{n \in \mathbb{Z}^d} f(n) z^n \quad (z\in\mathbb{T}^d).
\end{align*}
It is well defined as for each $z\in\mathbb{T}^d$,
\begin{align*}
    \|\widehat{f}(z)\| = \left\| \sum_{n \in \mathbb{Z}^d} f(n) z^n \right\| \leq \sum_{n \in \mathbb{Z}^d} \left\|  f(n) \right\| = \|f\|_{\ell^1} <\infty.
\end{align*}
If $0<p\leq1$ and $\omega$ is a weight on $\mathbb{Z}^d$, then we have $\ell^p_\omega(\mathbb{Z}^d,\mathcal{A})\subset \ell^1(\mathbb{Z}^d,\mathcal{A})$ (see Lemma~\ref{lem:weighted Fourier algebras inclusions}) and so the Fourier transform of $f\in\ell^p_\omega(\mathbb{Z}^d,\mathcal{A})$ is well defined. 

Let $\mathbb{T}^\infty=\{(z_1,z_2,z_3,\dots)\in\mathbb{C}^\infty:|z_i|=1\ \text{for all}\ i\in\mathbb{N}\}$ be the countable product of $\mathbb{T}$. For non-zero $\alpha=(\alpha_1,\alpha_2,\alpha_3,\dots)\in\mathbb{Z}^\mathbb{N}$ and $z=(z_1,z_2,z_3,\dots)\in\mathbb{T}^\infty$, define $$z^\alpha=z_1^{\alpha_1}z_2^{\alpha_2}z_3^{\alpha_3}\cdots.$$ Then $z^\mathbf0=1$, and since nonzero $\alpha$ has finite length, this product is well defined. Indeed, for nonzero $\alpha\in\mathbb{Z}^\mathbb{N}$, let $n_\alpha$ be the positive integer such that $\alpha_{n_\alpha}\neq0$ and $\alpha_{k}=0$ for all $k>n_\alpha$. Then
$$z^\alpha=z_1^{\alpha_1}z_2^{\alpha_2}z_3^{\alpha_3} \cdots z_{n_\alpha}^{\alpha_{n_\alpha}} \quad \text{and} \quad |z^\alpha|=|z_1|^{\alpha_1} |z_2|^{\alpha_2} |z_3|^{\alpha_3} \cdots |z_{n_\alpha}|^{\alpha_{n_\alpha}}.$$
If $f\in \ell^1(\mathbb{Z}^\mathbb{N},\mathcal{A})$, then the \textit{Fourier transform} of $f$, $\widehat f:\mathbb{T}^\infty\to\mathcal{A}$, is defined as 
\begin{align*} %\label{def:FT of f in l1(Z^N)}
    \widehat{f}(z)=\sum_{\alpha \in \mathbb{Z}^\mathbb{N}} f(n) z^\alpha \quad (z\in\mathbb{T}^\infty).
\end{align*}
Again, it is well defined for all $f\in\ell^p_\omega(\mathbb{Z}^\mathbb{N},\mathcal{A})\subset\ell^1(\mathbb{Z}^\mathbb{N},\mathcal{A})$ for  $0<p\leq1$ and a weight $\omega$ on $\mathbb{Z}^\mathbb{N}$.

A similar analysis follows for $L^1_\omega(\mathbb{R},\mathcal{A})$ with a weight $\omega$ on $\mathbb{R}$. In fact, let $T_\omega=\{x+iy\in\mathbb{C}:\rho_{1,\omega}\leq x \leq\rho_{2,\omega}, y\in\mathbb{R}\}$, and let $a+ib\in T_\omega$. Then $\sup\{\log\omega(x)^{1/x}:x<0\} \leq a \leq \inf\{\log\omega(x)^{1/x}:x>0\}$ gives $e^{ax}\leq\omega(x)$ for all $x\in\mathbb{R}$. So, for $f\in L^1_\omega(\mathbb{R},\mathcal{A})$, 
$$\int_\mathbb{R} \|f(x)\||e^{(a+ib)x}|dx \leq \int_\mathbb{R} \|f(x)\|e^{ax}dx \leq \int_\mathbb{R} \|f(x)\|\omega(x)dx<\infty.$$ 
Thus, we define the \emph{Fourier transform} $\widehat{f}$ of $f\in L^1_\omega(\mathbb{R},\mathcal{A})$ by $$\widehat{f}(z)=\int_\mathbb{R} f(x) e^{zx}dx \in \mathcal{A} \quad (z\in T_\omega).$$
Observe that the usual definition corresponds to $T_\omega=i\mathbb{R}$, which occurs when $\omega$ is an admissible weight, particularly when $\omega\equiv1$. Here, the continuous analogue, that is, the case of $\mathbb{R}$ is considered only for maximizing the weight in Section~\ref{sec:Maximizing weight} and so we do not consider the multivariate case of $\mathbb{R}$.

Since we work here with $\mathcal{A}$-valued algebras, we extend the definition of inverse-closedness to the vector-valued case by the following definition.

\begin{definition} \label{def:inverse-closed}
Let $\mathbb{X}$ be one of the sets $\mathbb{Z}^d$ or $\mathbb{Z}^\mathbb{N}$. A subalgebra $\mathcal{X}$ of $\ell^1(\mathbb{X},\mathcal{A})$ is said to be \emph{inverse-closed} if every $f\in\mathcal{X}$ that is invertible in $\ell^1(\mathbb{X},\mathcal{A})$ is also invertible in $\mathcal{X}$, or equivalently, if $f\in\mathcal{X}$ is such that $\widehat{f}(z)$ is invertible for all $z$, then $f$ is invertible in $\mathcal{X}$.
\end{definition}

\subsection{Some known results} \label{subsec:known analogues of Wiener's theorem}
First, state the results from \cite{kb,kb2} that summarize most of the known analogues of Wiener's theorem for $0<p\leq1$.

\begin{theorem} \cite[Theorem 1]{kb} \label{thm1}
Let $0<p\leq1$, $\omega$ be a weight on $\mathbb{Z}$, $\mathcal A$ be a unital Banach algebra, and let $f \in \ell^p_\omega(\mathbb{Z},\mathcal A)$. If $\widehat{f}(z)$ is left invertible (respectively, right invertible, invertible) in $\mathcal A$ for all $z \in \mathbb{T}$, then there is a weight $\nu$ on $\mathbb{Z}$ such that $\nu \leq \omega$, $\nu$ is constant if and only if $\omega$ is constant, $\nu$ is admissible if and only if $\omega$ is admissible, and a left inverse (respectively, right inverse, inverse) of $f$ is in $\ell^p_\nu(\mathbb{Z},\mathcal A)$. 
\end{theorem}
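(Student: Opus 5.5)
We prove Theorem~\ref{thm1} with Gel'fand theory applied to a suitable commutative quasi-Banach algebra. The Gel'fand space of $\ell^p_\omega(\mathbb{Z})$ is the annulus $\Gamma(\rho_{1,\omega},\rho_{2,\omega})$, and evaluation is $f\mapsto\sum_n f(n)z^n$. The hypothesis, however, only controls $\widehat f$ on $\mathbb{T}$. So the plan is to shrink the annulus to a thin one around $\mathbb{T}$ on which $\widehat f$ stays invertible, and to realise this thin annulus as the Gel'fand space of a smaller weight $\nu$.

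\emph{Step 1 (extension off the circle).} For $z$ in the annulus, $\widehat f(z)=\sum_n f(n)z^n$ converges absolutely in $\mathcal A$, because $\sum_n \|f(n)\|\,|z|^n\le\sum_n\|f(n)\|\omega(n)<\infty$; this uses $\ell^p_\omega\subset\ell^1_\omega$ for $p\le1$. The map $z\mapsto\widehat f(z)$ is therefore continuous on $\Gamma(\rho_{1,\omega},\rho_{2,\omega})$. The set of left invertible (right invertible, invertible) elements of $\mathcal A$ is open, and $\mathbb{T}$ is compact. Hence there are $\rho_1\le r_1\le 1\le r_2\le\rho_2$ such that $\widehat f(z)$ is left invertible (right invertible, invertible) for all $z\in\Gamma(r_1,r_2)$. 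We choose $r_1<1$ when $\rho_{1,\omega}<1$, and $r_2>1$ when $\rho_{2,\omega}>1$.

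\emph{Step 2 (the weight).} Define $\nu(n)=\min\{\omega(n), r_2^n\}$ for $n\ge0$ and $\nu(n)=\min\{\omega(n),r_1^n\}$ for $n<0$. A more robust alternative is the product form $\nu(n)=\max(r_1^{n},r_2^{n})$ capped suitably by $\omega$.
\begin{itemize}
\item We must check that $\nu\ge1$, that $\nu\le\omega$, and that $\nu$ is submultiplicative. Submultiplicativity of such a minimum is the delicate point; if it fails, we replace $\nu$ by $\max(r_1^n,r_2^n)$. This is a genuine weight and satisfies $\nu\le\omega$, because $r_2^n\le\rho_2^n\le\omega(n)$ for $n\ge0$ and similarly for $n<0$.
\item We must verify $\rho_{i,\nu}=r_i$. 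Then $\nu$ is admissible if and only if $r_1=r_2=1$, which by our choice happens if and only if $\omega$ is admissible.
\item For the constancy clause, we take $\nu\equiv1$ if $\omega$ is constant; a constant weight is $\equiv1$ here since $\omega(0)\ge1$ and $\omega$ is submultiplicative. If $\omega$ is not constant, we must ensure $\nu$ is nonconstant. If $\omega$ is admissible but nonconstant, $\max(r_1^n,r_2^n)$ would be constant, so instead we take a nonconstant admissible weight below $\omega$, for example $\nu=\omega$ itself. In that case the classical Wiener/Bochner--Phillips theorem already applies.
\end{itemize}

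\emph{Step 3 (inversion in $\ell^p_\nu(\mathbb{Z},\mathcal A)$).} This is the Bochner--Phillips step and the main obstacle, since $\mathcal A$ is noncommutative. We treat the invertible case first and follow Bochner--Phillips. Consider the closed subalgebra $\mathcal B$ of $\ell^p_\nu(\mathbb{Z},\mathcal A)$ generated by $f$ together with $\delta_{1}\otimes1_{\mathcal A}$, $\delta_{-1}\otimes1_{\mathcal A}$ and a maximal commutative subalgebra of $\mathcal A$ containing the coefficients. Alternatively, we use the standard device of representing $\ell^p_\nu(\mathbb{Z},\mathcal A)$ as a projective tensor-type algebra $\ell^p_\nu(\mathbb{Z})\widehat\otimes\mathcal A$, in which $\ell^p_\nu(\mathbb{Z})$ is central. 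We then apply Allan/Bochner--Phillips: an element is invertible if its image under every character $\varphi$ of the central subalgebra, extended as $\varphi\otimes\mathrm{id}:\ell^p_\nu(\mathbb{Z},\mathcal A)\to\mathcal A$, is invertible. The characters of $\ell^p_\nu(\mathbb{Z})$ are the evaluations at $z\in\Gamma(r_1,r_2)$, by \.Zelazko's $p$-normed Gel'fand theory. The image of $f$ under such a character is exactly $\widehat f(z)$, which is invertible by Step 1. Hence $f$ is invertible in $\ell^p_\nu(\mathbb{Z},\mathcal A)$.

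For one-sided inverses we use the left-ideal version. If $f$ had no left inverse, $\ell^p_\nu f$ would lie in a maximal left ideal $L$. The Bochner--Phillips argument then produces a $z\in\Gamma(r_1,r_2)$ and a proper left ideal of $\mathcal A$ containing $\widehat f(z)$, which is a contradiction. Proving this localisation in the $p$-normed setting is where most of the work lies. We would adapt Bochner--Phillips' proof verbatim, using the inequality \eqref{eq:ineqp<1} in place of the triangle inequality.
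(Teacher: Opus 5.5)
Steps~1 and~2 are sound. They match the construction the paper uses when it invokes this theorem (proof of Theorem~\ref{thm:proj limit d=1}, Remark~\ref{rem:inverse closedness if omega is GRS}):
\begin{enumerate}
\item take a thin annulus $\Gamma(r_1,r_2)$, obtained from continuity plus compactness;
\item set $\nu(n)=r_1^n$ for $n\le 0$ and $\nu(n)=r_2^n$ for $n\ge 0$ when $\omega$ is nonadmissible;
\item set $\nu=\omega$ when $\omega$ is admissible.
\end{enumerate}
Your hedging in Step~2 is unnecessary. From $r_1\ge\rho_{1,\omega}$ and $r_2\le\rho_{2,\omega}$ you get $r_1^n\le\omega(n)$ for $n\le 0$ and $r_2^n\le\omega(n)$ for $n\ge 0$. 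So your minimum and your $\max(r_1^n,r_2^n)$ are this same weight, which is plainly submultiplicative with $\rho_{1,\nu}=r_1$ and $\rho_{2,\nu}=r_2$. A constant weight need not be $\equiv 1$ (any constant $c\ge1$ is a weight), but that slip is harmless.

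\emph{The genuine gap is Step~3.} It carries the entire analytic content of the theorem, and you only announce it.
\begin{itemize}
\item Your first device is unavailable: a commutative subalgebra of $\mathcal A$ containing all the $f(n)$ exists only if the coefficients commute pairwise.
\item Your second device cites a local principle for the central subalgebra $\ell^p_\nu(\mathbb Z)1_{\mathcal A}$, which you do not prove for $p$-Banach algebras. Even granted, it only reduces matters to invertibility of $f$ modulo the closed ideal generated by $\ker\varphi_z$. To use your hypothesis you must still show that every $g$ with $\widehat g(z)=0$ lies in that ideal, and you never address this.
\item Adapting Bochner--Phillips ``verbatim'' is risky if that means Wiener's partition-of-unity localisation. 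For nonadmissible $\nu$ the algebra $\ell^p_\nu(\mathbb Z)$ is not regular: its Gel'fand transforms are holomorphic inside $\Gamma(r_1,r_2)$, so no bump functions exist.
\end{itemize}

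The missing argument is short. Let $B=\ell^p_\nu(\mathbb Z,\mathcal A)$, and let $\delta_n$ be the sequence equal to $1_{\mathcal A}$ at $n$ and $0$ elsewhere. Identify $c\in\ell^p_\nu(\mathbb Z)$ with $c\,1_{\mathcal A}$, and suppose $f$ has no left inverse.
\begin{enumerate}
\item By the Neumann series, the left invertible elements of the $p$-Banach algebra $B$ form an open set. Hence $B\star f$ lies in a maximal left ideal $L$, and $L$ is closed.
\item Each scalar $c$ is central in $B$, so $T_c(b+L)=c\star b+L$ is a bounded module endomorphism of the simple $B$-module $B/L$. By Schur's lemma and the open mapping theorem, these endomorphisms form a $p$-Banach division algebra.
\item By \.Zelazko's Gel'fand--Mazur theorem, $T_c=\varphi(c)I$ for a multiplicative functional $\varphi$ on $\ell^p_\nu(\mathbb Z)$.
\item Put $z=\varphi(\delta_1)$. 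The estimate $|\varphi(c)|^p\le\|c\|$ gives $|z|^n\le\nu(n)$ for all $n$, so $z\in\Gamma(r_1,r_2)$. Moreover, $\delta_n-z^n\delta_0\in L$ for every $n$.
\item Since $|z|^n\le\nu(n)$, the series $\sum_n f(n)(\delta_n-z^n\delta_0)$ converges in $B$. As $L$ is a closed left ideal, $f-\widehat f(z)\delta_0\in L$.
\item If $a\widehat f(z)=1_{\mathcal A}$, then $\delta_0=(a\delta_0)\star f-(a\delta_0)\star\bigl(f-\widehat f(z)\delta_0\bigr)\in L$, a contradiction.
\end{enumerate}
Hence $\widehat f(z)$ fails to be left invertible at some $z\in\Gamma(r_1,r_2)$, contradicting Step~1. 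Right inverses are handled with maximal right ideals. In the two-sided case, the left and right inverses coincide. With this inserted, your argument is complete.
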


\begin{theorem} \cite[Theorem 3.1]{kb2} \label{2var}
Let $0<p\leq1$, $\mathcal{A}$ be a unital Banach algebra, and let $\omega$ be a weight on $\mathbb{Z}^2$ such that either at least one of $\rho_{1,\omega},\rho_{2,\omega},\mu_{1,\omega}$ and $\mu_{2,\omega}$ is not $1$ or $\omega$ satisfies property $\ast_\omega$. Let $f\in \ell^p_\omega(\mathbb{Z}^2,\mathcal{A})$ be such that $\widehat{f}(z_1,z_2)$ is left invertible (respectively, right invertible, invertible) in $\mathcal{A}$ for all $z_1,z_2\in\mathbb{T}$. Then there is a weight $\nu$ on $\mathbb{Z}^2$ such that $\nu\leq\omega$, $\nu$ is constant if and only if $\omega$ is constant, and a left inverse (respectively, right inverse, inverse) of $f$ is in $\ell^p_\nu(\mathbb{Z}^2,\mathcal A)$. 
\end{theorem}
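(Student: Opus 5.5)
The plan follows the Gel'fand-theoretic route of \cite{kb} for one variable, run separately in each variable and then combined. Two cases are distinguished.

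\textbf{Case 1: at least one of $\rho_{1,\omega},\rho_{2,\omega},\mu_{1,\omega},\mu_{2,\omega}$ differs from $1$.} The idea is to shrink $\omega$ to a weight whose ``spectral region'' is a torus-like set on which $\widehat f$ stays invertible. First, $\widehat f$ extends, as a series with coefficients in $\mathcal A$, to the product of annuli $\Gamma(\rho_{1,\omega},\rho_{2,\omega})\times\Gamma(\mu_{1,\omega},\mu_{2,\omega})$. Indeed, for $(z_1,z_2)$ in this set and all $m$, one has $|z_1^{m_1}z_2^{m_2}|\le\omega(m)$, and together with \eqref{eq:ineqp<1} this gives $\ell^p_\omega\subset\ell^1$ with absolute convergence. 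The extension is continuous.

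Since $\widehat f$ is invertible on the compact set $\mathbb{T}^2$, and invertibility is an open condition in $\mathcal A$, $\widehat f$ remains invertible on a product of thin annuli $\Gamma(r_1,s_1)\times\Gamma(r_2,s_2)$ around $\mathbb T^2$. Here I choose $\rho_{1,\omega}\le r_i\le1\le s_i\le\rho_{2,\omega}$ (respectively $\le\mu_{2,\omega}$), taking $r_i<1$ or $s_i>1$ exactly where the corresponding $\rho$ or $\mu$ differs from $1$.

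Next, define $\nu(m_1,m_2)=h_1^{m_1}h_2^{m_2}$, with $h_i=r_i$ for negative exponents and $h_i=s_i$ for positive ones. This is a submultiplicative weight. It satisfies $\nu\le\omega$ by the definition of the $\rho$'s and $\mu$'s, after a suitable choice of the square roots in \eqref{def:rho for Z^2}. It is nonconstant precisely when some parameter differs from $1$, that is, precisely when $\omega$ is nonconstant in this case.

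The Gel'fand space of the commutative algebra $\ell^p_\nu(\mathbb Z^2)$ is then identified with $\Gamma(r_1,s_1)\times\Gamma(r_2,s_2)$. Finally I apply the vector-valued Bochner--Phillips/\.Zelazko-type principle used in \cite{kb}: an element of $\ell^p_\nu(\mathbb Z^2)\widehat\otimes\mathcal A$ whose Gel'fand transform is pointwise left invertible (respectively, right invertible, invertible) is itself left invertible (respectively, right invertible, invertible). One proves this by working in maximal left ideals of $\ell^p_\nu(\mathbb Z^2,\mathcal A)$, using that the scalar subalgebra is central. This gives the inverse in $\ell^p_\nu(\mathbb Z^2,\mathcal A)$.

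\textbf{Case 2: $\omega$ satisfies property $\ast_\omega$.} Under condition (1) of $\ast_\omega$, the symmetry of $\omega$ in one variable combined with admissibility forces the Gel'fand space of $\ell^p_\omega$ to be exactly $\mathbb T^2$, as in \cite{Da2}. Then $\nu=\omega$ already works, and the statement reduces to the two-variable Bochner--Phillips theorem. Under condition (2), the sup/inf equalities again identify the Gel'fand space with $\mathbb T^2$, or with a set on which $\widehat f$ is determined by its values on $\mathbb T^2$ through the symmetric products $\omega(m_1,m_2)\omega(m_1,-m_2)$. So either $\nu=\omega$ works directly, or one shrinks to a product weight exactly as in Case 1.

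The main obstacle is the Gel'fand-space identification in two variables. A general weight on $\mathbb Z^2$ need not be of product type, so its character space can be strictly smaller than the product of annuli and is not directly described by the four numbers $\rho_{i,\omega},\mu_{i,\omega}$. This is exactly why the construction replaces $\omega$ by the explicit product weight $\nu$, whose character space is computable. It is also why property $\ast_\omega$ is needed in the admissible case, where no shrinking is possible. I would first try to reduce Case 2 to the results of \cite{Da2} for the scalar Gel'fand space, and then invoke the vector-valued lifting.
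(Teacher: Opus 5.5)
Your proposal is correct and follows the same route as the proof in \cite{kb2}, whose construction this paper reuses in the proof of Theorem~\ref{thm:proj limit d=2}: extend $\widehat f$ to $\Gamma(\rho_{1,\omega},\rho_{2,\omega})\times\Gamma(\mu_{1,\omega},\mu_{2,\omega})$, use compactness of $\mathbb{T}^2$ to find a product of annuli on which $\widehat f$ stays left invertible, pass to the corresponding product weight, and lift by the Bochner--Phillips argument, taking $\nu=\omega$ when $\ast_\omega$ holds.

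Two small tidy-ups are needed. First, under condition (2) of $\ast_\omega$ the supremum is $\le1$ and the infimum is $\ge1$ because $\omega\geq1$, so both equal $1$; then $|z_1|^{2m_1}\le\omega(m_1,m_2)\omega(m_1,-m_2)$ and its analogue in $z_2$ force the character space to be exactly $\mathbb{T}^2$, so $\nu=\omega$ always works there, and your alternative of shrinking to a product weight is neither needed nor available (it would only give $\nu\equiv1$). Second, as your own first step shows, the product of annuli is always contained in the character space of $\ell^p_\omega(\mathbb{Z}^2)$, which may be strictly larger, not smaller; that is the real reason for passing to the product weight.
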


\begin{theorem} \cite[Theorem 4.1 and Theorem 4.2]{kb2} \label{thi}
Let $0<p\leq1$, $\mathcal{A}$ be a unital Banach algebra, $\omega$ be a weight on $\mathbb{Z}^\mathbb{N}$, and let $f\in \ell^p_{\omega}(\mathbb{Z}^\mathbb{N},\mathcal{A})$ be such that $\widehat{f}(z)$ is left invertible (respectively, right invertible, invertible) in $\mathcal{A}$ for all $z\in\mathbb{T}^\infty$. Then there is a weight $\nu$ on $\mathbb{Z}^\mathbb{N}$ such that $\nu\leq\omega$, $\nu$ is constant if and only if $\omega$ is constant, and a left inverse (respectively, right inverse, inverse) of $f$ is in $\ell^p_\nu(\mathbb{Z}^\mathbb{N},\mathcal{A})$.
Furthermore, suppose that the cardinality of the set $S_\omega$ (see \eqref{def:S_omega}) is $k\in\mathbb{N}\cup\{\infty\}$. Then there is a weight $\nu$ of order $l\in\{1,2,\dots,k\}$ if $k\in\mathbb{N}$, or of order $l\in\mathbb{N}$ if $k=\infty$, such that $\nu\leq\omega$, $\nu$ is constant if and only if $\omega$ is constant, and a left inverse of $f$ is in $\ell^p_\nu(\mathbb{Z}^\mathbb{N},\mathcal{A})$. 
\end{theorem}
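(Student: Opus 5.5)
The plan is to run the Gel'fand-theoretic argument of Bhatt--Dedania \cite{Bh}, in the vector-valued form of Bochner--Phillips \cite{Bo}, on the commutative $p$-Banach algebra $\ell^p_\omega(\mathbb{Z}^\mathbb{N})$.

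\textbf{Step 1: Gel'fand space.} I would first identify the Gel'fand space of $\ell^p_\omega(\mathbb{Z}^\mathbb{N})$ with
$$K_\omega=\Bigl\{z\in\mathbb{C}^\infty:\ \rho_{i,\omega}\le|z_i|\le\mu_{i,\omega}\ (i\in\mathbb{N}),\ \ |z^\alpha|\le\omega(\alpha)\ \text{for all}\ \alpha\in\mathbb{Z}^\mathbb{N}\Bigr\},$$
equipped with the product topology. The correspondence sends a character $\varphi$ to $z$ with $z_i=\varphi(\delta_{e_i})$. The bound $|\varphi(\delta_\alpha)|\le \omega(\alpha)$ follows from $|\varphi(g)|\le\|g\|^{1/p}$ (Żelazko's version of Gel'fand theory, \cite{Ze}). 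Finitely supported sequences are dense, so $\varphi$ is determined by $z$.

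This $K_\omega$ is compact and contains $\mathbb{T}^\infty$. Each $f$ extends to a continuous function $\widehat f(z)=\sum_\alpha f(\alpha)z^\alpha$ on $K_\omega$, because the series converges uniformly there: $\|f(\alpha)z^\alpha\|\le\|f(\alpha)\|\omega(\alpha)$ and $\ell^p_\omega\subset\ell^1_\omega$.

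\textbf{Step 2: vector-valued Wiener lemma.} Next I would invoke the Bochner--Phillips/Allan type lemma. If $\mathcal{B}$ is a commutative unital $p$-Banach algebra and $f\in\ell^p_\eta(\mathbb{Z}^\mathbb{N},\mathcal{A})$, then $f$ is left invertible whenever $\widehat f(\varphi)$ is left invertible in $\mathcal{A}$ for every character $\varphi$ of $\ell^p_\eta(\mathbb{Z}^\mathbb{N})$. This is the tool already used for Theorem~\ref{thm1} and Theorem~\ref{2var}. I would take it as the engine for the proof and only check that the $p$-norm setting causes no change.

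\textbf{Step 3: compactness and the weight $\nu$.} The set of left invertible elements of $\mathcal{A}$ is open, and $\widehat f$ is continuous on $K_\omega$. Hence $U=\{z\in K_\omega:\widehat f(z)\ \text{left invertible}\}$ is an open subset of $K_\omega$ containing the compact set $\mathbb{T}^\infty$.

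By the tube lemma in the product topology, $U$ contains a basic neighbourhood of $\mathbb{T}^\infty$ that restricts only finitely many coordinates $i_1<\dots<i_l$. This neighbourhood contains a set of the form
$$\{z\in K_\omega:\ r_j\le |z_{i_j}|\le s_j\ (1\le j\le l),\ |z_i|=1\ (i\notin\{i_1,\dots,i_l\})\},$$
with $\rho_{i_j,\omega}\le r_j\le1\le s_j\le\mu_{i_j,\omega}$. Only coordinates in $S_\omega$ can have a nontrivial interval, so I take $i_j\in S_\omega$. This gives $l\le k$ when $k$ is finite. I choose $r_j<1$ or $s_j>1$ whenever $\rho_{i_j,\omega}<1$ or $\mu_{i_j,\omega}>1$.

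Let $\nu$ be the weight of order $l$ built from these $r_j,s_j$. Its Gel'fand space is $\prod_j\Gamma(r_j,s_j)\times\mathbb{T}^{\infty}$, which lies inside $U$, so Step 2 gives a left inverse of $f$ in $\ell^p_\nu(\mathbb{Z}^\mathbb{N},\mathcal{A})$.

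\textbf{Step 4: $\nu\le\omega$ and the side conditions.} To get $\nu\le\omega$, I would test $|z^\alpha|\le\omega(\alpha)$ at the point $z$ with $|z_{i_j}|=s_j$ if $\alpha_{i_j}\ge0$, $|z_{i_j}|=r_j$ if $\alpha_{i_j}<0$, and $|z_i|=1$ otherwise. This requires such points to lie in $K_\omega$. Equivalently, $K_\omega$ should contain the full product of annuli $\prod\Gamma(\rho_{i,\omega},\mu_{i,\omega})$, restricted to finitely many coordinates.

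I expect this to be the main obstacle. My first attempt would be to pick the $r_j,s_j$ close enough to $1$ that submultiplicativity of $\omega$ forces the needed inequalities. If that fails, I would shrink the annuli further, which is allowed since it only shrinks the Gel'fand space.

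The remaining side conditions are handled as follows.
\begin{itemize}
\item If $\omega$ is admissible, $K_\omega=\mathbb{T}^\infty$ and $\nu=\omega$ works; then $\nu$ is constant exactly when $\omega$ is.
\item If $\omega$ is constant, then $S_\omega=\emptyset$ and $\nu\equiv1$ works.
\item If $\omega$ is nonadmissible, it is nonconstant and $S_\omega\neq\emptyset$. Choosing at least one nontrivial annulus makes $\nu$ nonconstant.
\end{itemize}
The first part of the theorem follows from the same construction, and the right-invertible and invertible cases are symmetric.
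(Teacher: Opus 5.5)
\textbf{Gap 1: the product of annuli is never shown to lie in the domain of $\omega$.} Your overall plan (characters, a Bochner--Phillips type lemma, compactness around $\mathbb{T}^\infty$, a weight of order $l$) is the right one. What is missing is the step that makes the construction work: you never show that $\prod_j\Gamma(r_j,s_j)\times\mathbb{T}^\infty$ lies in the region where $\widehat f$ converges, i.e.\ that $\nu\le\omega$.

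Step~3 already assumes this. The tube lemma only gives you the set $\{z\in K_\omega:\dots\}$, and that equals the full product only if the product sits inside $K_\omega$. Step~4 then leaves it open.

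Your proposed remedy points the wrong way. Submultiplicativity bounds $\omega$ from above, but you need the lower bound $\prod_j h_j^{\alpha_{i_j}}\le\omega(\alpha)$. The only lower bounds available are the coordinatewise ones encoded in $\rho_{i,\omega},\mu_{i,\omega}$, which are uniform in the other coordinates: if $\rho_{i,\omega}\le t\le\mu_{i,\omega}$, then $t^{\alpha_i}\le\omega(\alpha)$ for every $\alpha$. Multiplying $l$ of these gives only $\omega(\alpha)^l$, and this loss is genuine. For $\omega(\alpha)=e^{\max_i|\alpha_i|}$ one has $\mu_{i,\omega}=e$ for every $i$, yet $s_1=s_2=e$ gives $\nu(e_1+e_2)=e^2>e=\omega(e_1+e_2)$.

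The missing idea is the $l$-th root used in the paper's construction (proof of Theorem~\ref{thm:proj limit d=infty}; in two variables it is the square root built into the definitions of $\rho_{i,\omega},\mu_{i,\omega}$).
\begin{enumerate}
\item Fix $l$ and $i_1<\dots<i_l$ in $S_\omega$ first.
\item Work on the compact set $D=\prod_t\Gamma(\rho_{i_t,\omega}^{1/l},\mu_{i_t,\omega}^{1/l})\times\mathbb{T}^\infty$. There $|z^\alpha|=\prod_t\bigl(|z_{i_t}|^{l\alpha_{i_t}}\bigr)^{1/l}\le\omega(\alpha)$, so $\widehat f$ converges absolutely and uniformly on $D$.
\item Run the compactness argument inside $D$. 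Any $r_t,s_t$ it produces automatically give $\nu\le\omega$.
\item Nontriviality survives, because $\rho^{1/l}<1$ if and only if $\rho<1$, and likewise for $\mu$.
\end{enumerate}
This route never needs to identify the Gel'fand space of $\omega$.

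\textbf{Gap 2: Step~1 misidentifies the Gel'fand space, which breaks the admissible case.} The characters of $\ell^p_\omega(\mathbb{Z}^\mathbb{N})$ are exactly the $z$ with $|z^\alpha|\le\omega(\alpha)$ for all $\alpha$. This does not force $\rho_{i,\omega}\le|z_i|\le\mu_{i,\omega}$. So $K_\omega$ can be a proper subset of the Gel'fand space, and $S_\omega=\emptyset$ does not make the Gel'fand space equal to $\mathbb{T}^\infty$.

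Concretely, take $\omega(\alpha)=e^{|\alpha_1-\alpha_2|}$, which has $S_\omega=\emptyset$. Let $f=2\delta_{\mathbf 0}-\delta_{e_1-e_2}$, so $\widehat f(z)=2-z_1z_2^{-1}$ is invertible on $\mathbb{T}^\infty$. Yet $z=(\sqrt2,1/\sqrt2,1,1,\dots)$ satisfies $|z^\alpha|=2^{(\alpha_1-\alpha_2)/2}\le\omega(\alpha)$ and $\widehat f(z)=0$. Hence $f$ is not invertible in $\ell^p_\omega$, and your choice $\nu=\omega$ fails. The paper's proof of Theorem~\ref{thm:proj limit d=infty} takes the same shortcut.

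For the first part of the statement this is easy to repair. Take $\nu=1+\log\omega$:
\begin{enumerate}
\item it is a weight, since $1+a+b\le(1+a)(1+b)$ for $a,b\ge0$;
\item $\nu\le\omega$, and $\nu$ is constant if and only if $\omega$ is;
\item it satisfies the GRS condition, so its Gel'fand space is $\mathbb{T}^\infty$ and your Step~2 applies.
\end{enumerate}
The order-$l$ part still requires the $1/l$-root construction above.
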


\begin{theorem} \cite[Theorem 3]{kb} \label{thr}
Let $\mathcal{A}$ be a unital Banach algebra with unit $1_\mathcal{A}$, $\omega$ be a weight on $\mathbb{R}$, and let $f\in L^1_\omega(\mathbb{R},\mathcal{A})$ be such that $1_\mathcal{A}+\widehat{f}(t)$ is left invertible (respectively, right invertible, invertible) in $\mathcal{A}$ for all $t\in\mathbb{R}$. Then there is a weight $\nu$ on $\mathbb{R}$ such that $\nu\leq\omega$, $\nu$ is constant if and only if $\omega$ is constant, $\nu$ is admissible if and only if $\omega$ is admissible, and a left inverse (respectively, right inverse, inverse) of $\mathbf1+f$ is in $L^1_\nu(\mathbb{R},\mathcal{A})$. In particular, if $\omega$ is admissible, then $\mathbf{1}+f$ is left invertible (respectively, right invertible, invertible) in $L^1_\omega(\mathbb{R},\mathcal{A})_\mathbf1$.
\end{theorem}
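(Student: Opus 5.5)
The statement is \cite[Theorem 3]{kb}. The plan has two parts: a noncommutative Gel'fand-type reduction, which shows that $\mathbf1+f$ is left invertible in $L^1_\nu(\mathbb{R},\mathcal{A})_\mathbf1$ as soon as $1_\mathcal{A}+\widehat f(z)$ is left invertible for every $z$ in the strip $T_\nu$, and an explicit construction of an exponential-type weight $\nu\le\omega$ whose strip $T_\nu$ is thin enough that this invertibility holds.

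\emph{Choice of $\nu$.} First I record the properties of $\widehat f$ that I need.
\begin{itemize}
\item For every $a+ib\in T_\omega$ we have $e^{ax}\le\omega(x)$, so $\widehat f$ is defined and continuous on the closed strip $T_\omega$.
\item By the vector-valued Riemann--Lebesgue lemma (approximate $f$ by $\mathcal{A}$-valued step functions), $\widehat f(a+ib)\to0$ as $|b|\to\infty$, uniformly for $a$ in compact subsets of $[\rho_{1,\omega},\rho_{2,\omega}]$.
\end{itemize}
The hypothesis says that $1_\mathcal{A}+\widehat f$ is left invertible on the imaginary axis $i\mathbb{R}$. Left invertible elements of $\mathcal{A}$ form an open set. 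Away from a compact set, $\widehat f$ is uniformly small, so $1_\mathcal{A}+\widehat f$ is invertible there by a Neumann series. Combining these, I obtain $\rho_{1,\omega}\le a_1\le0\le a_2\le\rho_{2,\omega}$ such that $1_\mathcal{A}+\widehat f(z)$ is left invertible for all $z$ with $a_1\le\operatorname{Re}z\le a_2$. The endpoints are chosen as follows:
\begin{itemize}
\item if $\rho_{j,\omega}\neq0$, I choose $a_j\neq0$ (small enough);
\item if $\rho_{j,\omega}=0$, I take $a_j=0$.
\end{itemize}
Then I set $\nu(x)=e^{a_2x}$ for $x\ge0$ and $\nu(x)=e^{a_1x}$ for $x<0$.

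\emph{Properties of $\nu$.} I check the following directly.
\begin{itemize}
\item $\nu\ge1$, and $\nu$ is submultiplicative. The only nontrivial case is when $x$ and $y$ have opposite signs, where it follows from $a_1\le0\le a_2$.
\item $\nu\le\omega$, because $e^{a_jx}\le\omega(x)$ on the corresponding half-lines.
\item $T_\nu=\{a_1\le\operatorname{Re}z\le a_2\}$.
\item $\nu$ is admissible iff $a_1=a_2=0$, iff $\omega$ is admissible.
\end{itemize}
In the admissible case, the condition ``constant iff constant'' needs separate care. A constant weight on $\mathbb{R}$ is admissible, but an admissible weight need not be constant, so here I simply take $\nu=\omega$; then $T_\nu=i\mathbb{R}$ and the hypothesis applies verbatim. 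This also yields the ``in particular'' clause. In the nonadmissible case some $a_j\neq0$, so $\nu$ is nonconstant, and $\omega$ is nonconstant as well.

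\emph{Gel'fand step (the main obstacle).} It remains to show: if $1_\mathcal{A}+\widehat f(z)$ is left invertible for all $z\in T_\nu$, then $\mathbf1+f$ is left invertible in $B=L^1_\nu(\mathbb{R},\mathcal{A})_\mathbf1$. The difficulty is that $\mathcal{A}$, and hence $B$, is noncommutative. I would follow Bochner--Phillips.
\begin{enumerate}
\item Suppose $\mathbf1+f$ is not left invertible. Then it lies in a maximal left ideal $L$ of $B$.
\item The scalar algebra $C=L^1_\nu(\mathbb{R})_\mathbf1\otimes1_\mathcal{A}$ is commutative and commutes with every element of $B$. So each $c\in C$ acts on the simple left $B$-module $B/L$ by a $B$-module endomorphism.
\item By Schur's lemma and Gel'fand--Mazur, applied to the commutant of a simple Banach module (Rickart), each such $c$ acts as a scalar $\varphi(c)$. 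Thus $\varphi$ is a character of $L^1_\nu(\mathbb{R})_\mathbf1$.
\item The characters of $L^1_\nu(\mathbb{R})_\mathbf1$ are known: either evaluation of the Laplace transform $g\mapsto\widehat g(z)$ at some $z\in T_\nu$, or the character at infinity, which kills $L^1_\nu$.
\end{enumerate}
Next I approximate $f$ in $L^1_\nu(\mathbb{R},\mathcal{A})$ by finite sums $\sum_k g_k\otimes b_k$ with $g_k\in L^1_\nu(\mathbb{R})$ and $b_k\in\mathcal{A}$. Modulo $L$, this gives
\[
\mathbf1+f\equiv\bigl(1_\mathcal{A}+\widehat f(z)\bigr)\cdot\mathbf1 \quad\text{in } B/L,
\]
or $\mathbf1+f\equiv\mathbf1$ for the character at infinity. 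If $u$ is a left inverse of $1_\mathcal{A}+\widehat f(z)$ in $\mathcal{A}$, then $(u\cdot\mathbf1)(\mathbf1+f)-\mathbf1\in L$. Since $\mathbf1+f\in L$, this forces $\mathbf1\in L$, a contradiction.

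The right-invertible case is the same argument with right ideals, and two-sided invertibility follows by combining the two. The delicate points are step 3, namely that $C$ acts by scalars on the simple module (this is where the Banach-algebra Schur lemma is needed), and the bookkeeping that the approximation is compatible with passing to the quotient $B/L$.
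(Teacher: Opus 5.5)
Your plan follows the route behind the cited result \cite[Theorem 3]{kb}. It has the same ingredients. First, the Riemann--Lebesgue, openness and compactness argument gives $\rho_{1,\omega}\le a_1\le 0\le a_2\le\rho_{2,\omega}$; this step is reproduced in the proof of Theorem~\ref{thm:max weight continuous}. Second, you use the two-sided exponential weight, with $\nu=\omega$ in the admissible case (cf.\ Remark~\ref{rem:inverse closedness if omega is GRS}). Third, Bochner--Phillips type Gel'fand theory is applied over the strip $T_\nu$. Your checks are correct: $\nu\ge1$, submultiplicativity, $\nu\le\omega$, the identification of $T_\nu$, and the constant and admissible equivalences. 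The central-subalgebra and Schur--Rickart step is also sound.

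One step does not work as written. In this paper, $L^1_\nu(\mathbb{R},\mathcal{A})_\mathbf1=L^1_\nu(\mathbb{R},\mathcal{A})\times\mathbb{C}$ adjoins only scalar multiples of $\mathbf1$. So $u\cdot\mathbf1$ and $(1_\mathcal{A}+\widehat f(z))\cdot\mathbf1$ are not elements of $B$ unless $u\in\mathbb{C}1_\mathcal{A}$. Your congruence modulo $L$ and your final contradiction are therefore not statements in $B$.

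The simplest repair is to run the maximal-left-ideal argument in $\widetilde B=L^1_\nu(\mathbb{R},\mathcal{A})\oplus\mathcal{A}\mathbf1$, normed by $\|h\|_{L^1_\nu}+\|a\|$. There:
\begin{enumerate}
\item $C$ is still central.
\item The identity $g\otimes b=(g\otimes 1_\mathcal{A})(b\mathbf1)$ makes $f-\widehat f(z)\mathbf1\in L$ literal, and gives $f\in L$ for the character at infinity.
\item If $(h+a\mathbf1)(\mathbf1+f)=\mathbf1$ in $\widetilde B$, comparing $\mathcal{A}\mathbf1$-components forces $a=1_\mathcal{A}$. So the left inverse $\mathbf1+h$ already lies in $B$.
\end{enumerate}
Alternatively, you can stay inside $B$:
\begin{enumerate}
\item Replace $u\cdot\mathbf1$ by $k\otimes u$, where $k\in L^1_\nu(\mathbb{R})$ satisfies $\widehat k(z)=1$. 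Then $(k\otimes u)(\mathbf1+f)\equiv k\otimes 1_\mathcal{A}\equiv\mathbf1$ modulo $L$, which gives the contradiction.
\item For the character at infinity, use an approximate identity $(e_n)$ of $L^1_\nu(\mathbb{R})$. Since $g\otimes b=\lim_n (g\otimes 1_\mathcal{A})(e_n\otimes b)$, we get $L^1_\nu(\mathbb{R},\mathcal{A})\subset L$.
\end{enumerate}
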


\begin{remark} \label{rem:inverse closedness if omega is GRS}
It follows from the construction of $\nu$ in the proofs of above results that $\nu=\omega$ whenever $\omega$ satisfies the GRS condition, or equivalently the corresponding algebra is inverse-closed.
\end{remark}

\begin{remark}
    Note that $\mathcal{A}$ is not required to be commutative which means that the corresponding algebras may be noncommutative. Therefore, a left inverse, if it exists, is not required to be unique.
\end{remark}

Lastly, we state some results that will be used here to prove our results.

\begin{lemma} \cite[Lemma 1.2.7 (b)]{Kan} \label{lem:norm less than 1 invertible}
Let $(\mathcal{A},\|\cdot\|)$ be a unital Banach algebra, and let $x\in\mathcal{A}$ be such that $\|1_\mathcal{A}-x\|<1$. Then $x$ is invertible in $\mathcal{A}$.
\end{lemma}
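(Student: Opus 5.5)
The statement is \cite[Lemma 1.2.7 (b)]{Kan}: if $\|1_\mathcal{A}-x\|<1$ in a unital Banach algebra, then $x$ is invertible. I will prove it with the Neumann series. Put $y=1_\mathcal{A}-x$, so that $\|y\|<1$, and consider the partial sums
\begin{align*}
s_N=\sum_{k=0}^{N} y^k, \qquad y^0=1_\mathcal{A}.
\end{align*}

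The first step is to show that $(s_N)$ converges. Submultiplicativity of the norm gives $\|y^k\|\leq\|y\|^k$. Hence for $M>N$,
\begin{align*}
\|s_M-s_N\|\leq\sum_{k=N+1}^{M}\|y\|^k\leq \frac{\|y\|^{N+1}}{1-\|y\|}.
\end{align*}
This tends to $0$ as $N\to\infty$ because $\|y\|<1$, so $(s_N)$ is Cauchy. Since $\mathcal{A}$ is complete, $s_N\to s$ for some $s\in\mathcal{A}$.

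The second step is the telescoping identity
\begin{align*}
(1_\mathcal{A}-y)s_N = s_N(1_\mathcal{A}-y) = 1_\mathcal{A}-y^{N+1}.
\end{align*}
Multiplication in a Banach algebra is jointly continuous, since $\|ab-a'b'\|\leq\|a\|\|b-b'\|+\|a-a'\|\|b'\|$. Letting $N\to\infty$, and using $\|y^{N+1}\|\leq\|y\|^{N+1}\to0$, we obtain $xs=sx=1_\mathcal{A}$. Thus $x$ is invertible with $x^{-1}=s$.

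There is no genuine obstacle here. The only points that need care are the use of completeness, which is where the Banach hypothesis enters, and the continuity of multiplication, which is needed to pass to the limit on both sides. The argument does not use commutativity, so it applies to the possibly noncommutative $\mathcal{A}$ considered throughout the paper.
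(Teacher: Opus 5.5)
Your Neumann series argument is correct and complete. The Cauchy estimate only needs $\|y^k\|\leq\|y\|^k$ for $k\geq 1$, the two-sided telescoping identity passes to the limit to give $xs=sx=1_\mathcal{A}$, and no commutativity is used.

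The paper gives no proof of its own; it simply cites \cite{Kan}. Your geometric-series argument is essentially the standard proof behind that citation.
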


\begin{lemma} \cite[Lemma 1.3.3]{Kan} \label{lem:weights bounded on compact sets}
Let $G$ be a locally compact group, $\omega$ be a weight on $G$, and let $K$ be a compact subset of $G$. Then there are $0<a\leq b<\infty$ such that $a \leq \omega(x) \leq b $ for all $x\in K$, that is, $\omega$ is bounded away and bounded on every compact subsets of $G$.
\end{lemma}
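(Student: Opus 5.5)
The plan is to derive the two-sided bound from the submultiplicativity $\omega(xy)\leq\omega(x)\omega(y)$ together with measurability of $\omega$, using a Steinhaus-type argument for the upper bound; the lower bound then follows by inverting. In the setting of this paper $\omega\geq 1$, so $a=1$ works trivially. I would still give the general argument, since a weight on a locally compact group is in general only positive, measurable and submultiplicative.

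\emph{Upper bound near the identity.} For $n\in\mathbb{N}$ let
\begin{align*}
E_n=\{x\in G:\ \omega(x)\leq n \text{ and } \omega(x^{-1})\leq n\}.
\end{align*}
Each $E_n$ is measurable, and since $\omega$ is finite at every point, $G=\bigcup_n E_n$. Haar measure is $\sigma$-finite on the $\sigma$-compact open subgroup generated by a compact neighbourhood of $e$, so some $E_n$ meets that subgroup in a set of positive measure. By inner regularity it then contains a compact set $C$ with positive Haar measure. Steinhaus' theorem gives a neighbourhood $V$ of $e$ with $V\subseteq CC^{-1}$. For $v=xy^{-1}$ with $x,y\in C$,
\begin{align*}
\omega(v)\leq\omega(x)\,\omega(y^{-1})\leq n^2 .
\end{align*}
Hence $\omega\leq n^2$ on $V$.

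\emph{Upper bound on $K$.} By compactness, $K\subseteq\bigcup_{i=1}^m x_iV$ for some $x_1,\dots,x_m$. Then for $x_iv\in K$ we get $\omega(x_iv)\leq\omega(x_i)\,n^2$. So $b=n^2\max_i\omega(x_i)$ is a bound on $K$. Applying the same argument to the compact set $K^{-1}$ gives $b'$ with $\omega(x^{-1})\leq b'$ for all $x\in K$.

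\emph{Lower bound.} From $\omega(e)\leq\omega(x)\,\omega(x^{-1})$ and $\omega(e)>0$, we get for $x\in K$
\begin{align*}
\omega(x)\geq\frac{\omega(e)}{b'}.
\end{align*}
So $a=\min\{\omega(e)/b',\,b\}$ works.

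The main obstacle is the first step: obtaining a neighbourhood of $e$ on which $\omega$ is bounded using only measurability. This is exactly where the Steinhaus theorem (or continuity of $x\mapsto\lambda(C\cap xC)$) is needed. Choosing $E_n$ so that it controls both $\omega$ and $\omega\circ\mathrm{inv}$ is what makes the bound on $CC^{-1}$ go through. For $G=\mathbb{Z}^d$ or $\mathbb{R}$ the argument simplifies: in the discrete case compact sets are finite, and on $\mathbb{R}$ one can use the classical Steinhaus theorem directly.
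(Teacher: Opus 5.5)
Your proof is correct and complete. The paper gives no proof of its own here; the lemma is simply quoted from Kaniuth. So the comparison is with the standard measure-theoretic argument, and that is essentially what you have written:
\begin{itemize}
\item level sets exhausting $G$, and a compact $C$ of positive Haar measure inside one of them;
\item Steinhaus, giving a neighbourhood $V$ of $e$ with $\omega\le n^2$ on $V$;
\item a finite cover of $K$ by translates $x_iV$;
\item finally $\omega(e)\le\omega(x)\omega(x^{-1})$, together with the bound on $K^{-1}$, for the lower estimate.
\end{itemize}

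Putting $\omega(x^{-1})\le n$ into $E_n$ makes $E_n=E_n^{-1}$. So it does not matter whether your form of Steinhaus yields $CC^{-1}$ or $C^{-1}C$.

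A common alternative is the product form of Steinhaus ($AB$ has nonempty interior when $\lambda(A),\lambda(B)>0$), applied to $\{\omega\le n\}$ alone. This avoids controlling $\omega\circ\mathrm{inv}$. The price is that the open set $W$ on which $\omega\le n^2$ need not contain $e$, so $K$ must then be covered by sets $ku_0^{-1}W$ with $u_0\in W$ fixed.

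The $\sigma$-compact subgroup is unnecessary. A compact neighbourhood $U$ of $e$ has $0<\lambda(U)<\infty$, so some $E_n\cap U$ already has positive finite measure, and inner regularity applies directly.

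As you observe, in the paper's convention $\omega\ge 1$, so $a=1$. Also, compact subsets of $\mathbb{Z}^d$ and $\mathbb{Z}^\mathbb{N}$ are finite. Hence the lemma has genuine content in the paper only for Borel weights on $\mathbb{R}$, in the continuous maximization theorem, and your argument covers that case.

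For the same reason, the lemma says nothing about infinite sets such as $\{m\in\mathbb{Z}^\mathbb{N}:|m|\le M\}$. Where the paper invokes it on such sets in the hierarchy theorem, the bounds it needs actually come from the explicit form of the polynomial, subexponential and exponential weights.
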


\section{Lemmas} \label{sec:lemmas}
In this section, we provide the lemmas required to prove our results. First, we state a lemma that gives the inclusion of the quasi-Banach spaces under study.

\begin{lemma} \label{lem:weighted Fourier algebras inclusions}
Let $0 < p \leq q \leq 1$, $\mathbb{X}$ be $\mathbb{Z}^d$ or $\mathbb{Z}^\mathbb{N}$, $\omega$ and $\nu$ be weights on $\mathbb{X}$ such that $\nu \leq \omega$, and let $\mathcal{A}$ be a Banach algebra. Then we have the following chain of inclusions
\begin{align*} 
    \ell^{p}_{\omega}(\mathbb{X},\mathcal{A}) \subset \ell^p_{\nu}(\mathbb{X},\mathcal{A}) \subset \ell^q_\nu(\mathbb{X},\mathcal{A}) \subset \ell^1(\mathbb{X},\mathcal{A})    
\end{align*}
 with the norm inequality
\begin{align} \label{eq:inequality of norms}
    \|f\|_{\ell^1(\mathbb{X},\mathcal{A})} \leq \|f\|_{\ell^q_{\nu}(\mathbb{X},\mathcal{A})}^\frac{1}{q} \leq \|f\|_{\ell^p_{\nu}(\mathbb{X},\mathcal{A})}^\frac{1}{p} \leq \|f\|_{\ell^p_{\omega}(\mathbb{X},\mathcal{A})}^\frac{1}{p}.
\end{align}
\end{lemma}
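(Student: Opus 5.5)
The plan is to prove the chain of norm inequalities \eqref{eq:inequality of norms}; each set inclusion then follows at once. Recall that $\|f\|_{\ell^p_\omega}=\sum_{x\in\mathbb{X}}\|f(x)\|^p\omega(x)^p$. We handle the three inequalities one at a time, from right to left.

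\emph{The rightmost inequality.} Since $\nu\leq\omega$ pointwise and $t\mapsto t^p$ is increasing on $[0,\infty)$, we have $\|f(x)\|^p\nu(x)^p\leq\|f(x)\|^p\omega(x)^p$ for every $x$. Summing over $x$ gives $\|f\|_{\ell^p_\nu}\leq\|f\|_{\ell^p_\omega}$, and taking $1/p$-th powers gives the claim.

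\emph{The middle inequality.} This is the standard nesting $\ell^p\subset\ell^q$ for $p\leq q$, applied to the scalar sequence $a_x=\|f(x)\|\nu(x)\geq0$. We must show $\left(\sum_x a_x^q\right)^{1/q}\leq\left(\sum_x a_x^p\right)^{1/p}$.

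The cleanest route uses \eqref{eq:ineqp<1} with exponent $p/q\in(0,1]$. Applied to finitely many terms by induction, and then passing to the limit over finite partial sums, it gives
\[
\Big(\sum_x b_x\Big)^{p/q}\leq\sum_x b_x^{p/q}\quad\text{for } b_x\geq 0.
\]
Taking $b_x=a_x^q$ yields $\left(\sum_x a_x^q\right)^{p/q}\leq\sum_x a_x^p$, and raising both sides to the power $1/p$ finishes this step.

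Alternatively, one can normalise so that $\sum_x a_x^p=1$. Then each $a_x\leq1$, hence $a_x^q\leq a_x^p$, and homogeneity gives the general case.

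\emph{The leftmost inequality.} Since $\nu\geq1$, we have $\|f(x)\|\leq\|f(x)\|\nu(x)$ for every $x$. Therefore
\[
\|f\|_{\ell^1}\leq\sum_x\|f(x)\|\nu(x)=\|f\|_{\ell^1_\nu}.
\]
The previous step with exponents $q\leq1$ then gives $\|f\|_{\ell^1_\nu}\leq\|f\|_{\ell^q_\nu}^{1/q}$.

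None of these steps is a real obstacle. The only point needing care is the countable-sum version of \eqref{eq:ineqp<1}. We get it from finite sums by monotone convergence, which is valid because all terms are nonnegative; this works for $\mathbb{Z}^\mathbb{N}$ too, since that set is countable. Finiteness of the larger quasi-norm then forces finiteness of the smaller ones, which gives the inclusions.
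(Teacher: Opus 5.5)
Your proposal is correct and matches the paper's proof: it uses $\nu\geq1$, the inequality \eqref{eq:ineqp<1} with exponents $q$ and $p/q$, and the pointwise bound $\nu\leq\omega$, in that order. Your explicit passage from finite to countable sums via nonnegativity is a detail the paper leaves implicit, but the argument is the same.
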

\begin{proof}
Let $f\in\ell^{p}_{\omega}(\mathbb{X},\mathcal{A})$. Then using $1\leq \nu \leq \omega$, and \eqref{eq:ineqp<1} for $0<p\leq q\leq 1$ and $\frac{p}{q}\leq1$, it follows that
\begin{align*}
    \|f\|_{\ell^1(\mathbb{X},\mathcal{A})} = \sum_{\alpha \in \mathbb{X}} \|f(\alpha)\| &\leq  \left( \sum_{\alpha \in \mathbb{X}} \|f(\alpha)\| \nu(\alpha) \right)^\frac{q}{q} \\
    &\leq  \left( \sum_{\alpha \in \mathbb{X}} \|f(\alpha)\|^q \nu(\alpha)^q \right)^\frac{1}{q} = \|f\|_{\ell^q_{\nu}(\mathbb{X},\mathcal{A})}^\frac{1}{q} \\
    & =  \left( \sum_{\alpha \in \mathbb{X}} \|f(\alpha)\|^q \nu(\alpha)^q \right)^\frac{p}{pq}  \\
    &\leq  \left( \sum_{\alpha \in \mathbb{X}} \|f(\alpha)\|^p \nu(\alpha)^p \right)^\frac{1}{p} = \|f\|_{\ell^p_{\nu}(\mathbb{X},\mathcal{A})}^\frac{1}{p} \\
    &\leq  \left( \sum_{\alpha \in \mathbb{X}} \|f(\alpha)\|^p \omega(\alpha)^p \right)^\frac{1}{p} = \|f\|_{\ell^p_{\omega}(\mathbb{X},\mathcal{A})}^\frac{1}{p}. \\
\end{align*}
\end{proof}

The following lemma depends on $\mathcal{A}$ being a complex algebra, which is necessary for the theorems in Section~\ref{subsec:known analogues of Wiener's theorem} to hold.

\begin{lemma} \label{lem:l^p(X,C) subset l^p(X,A)}
Let $0 < p \leq 1$, $\mathbb{X}$ be $\mathbb{Z}^d$ or $\mathbb{Z}^\mathbb{N}$, $\omega$ be a weight on $\mathbb{X}$, and let $\mathcal{A}$ be a unital Banach algebra. Then $\ell^p_\omega(\mathbb{X},\mathbb{C})$ is identified as a unital subalgebra of $\ell^p_\omega(\mathbb{X},\mathcal{A})$, both having the same unit. In particular, $\ell^p_\omega(\mathbb{X},\mathbb{C}) \subset \ell^p_\omega(\mathbb{X},\mathcal{A})$. Moreover, $f \in \ell^p_\omega(\mathbb{X},\mathbb{C})$ is invertible in $\ell^p_\omega(\mathbb{X},\mathbb{C})$ if and only if $f$ is invertible in $\ell^p_\omega(\mathbb{X},\mathcal{A})$.
\end{lemma}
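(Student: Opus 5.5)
The identification is the map $\iota:\ell^p_\omega(\mathbb{X},\mathbb{C})\to\ell^p_\omega(\mathbb{X},\mathcal{A})$ given by $\iota(f)(\alpha)=f(\alpha)1_\mathcal{A}$. First I check that it is well defined and injective. Since $\|1_\mathcal{A}\|$ is a positive constant (normalised to $1$ as usual, and in any case finite and nonzero), we have $\|\iota(f)\|_{\ell^p_\omega}=\|1_\mathcal{A}\|^p\|f\|_{\ell^p_\omega}$. This gives both finiteness and injectivity. The map is clearly linear.

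Next I show that $\iota$ is multiplicative. For $f,g\in\ell^p_\omega(\mathbb{X},\mathbb{C})$, the convolution sum defining $(\iota f\star\iota g)(\alpha)=\sum_\beta f(\alpha-\beta)g(\beta)1_\mathcal{A}1_\mathcal{A}$ converges absolutely, because both sequences lie in $\ell^1$ by Lemma~\ref{lem:weighted Fourier algebras inclusions}. Using $1_\mathcal{A}1_\mathcal{A}=1_\mathcal{A}$, this equals $(f\star g)(\alpha)1_\mathcal{A}=\iota(f\star g)(\alpha)$. The unit $\delta_{\mathbf 0}$ of the scalar algebra is mapped to the $\mathcal{A}$-valued $\delta_{\mathbf 0}$ with value $1_\mathcal{A}$, which is the unit described earlier. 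So $\iota$ is a unital algebra embedding, which gives the inclusion.

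For the invertibility statement, one direction is immediate: if $f\star g=\delta_{\mathbf0}=g\star f$ in the scalar algebra, then applying $\iota$ shows that $\iota f$ is invertible. The converse is the main point. Suppose $\iota f\star h=\delta_{\mathbf0}=h\star\iota f$ for some $h\in\ell^p_\omega(\mathbb{X},\mathcal{A})$. The inverse $h$ need not a priori be scalar-valued, so I use a functional to push it back into $\mathbb{C}$.

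By Hahn--Banach, choose $\phi\in\mathcal{A}^*$ with $\phi(1_\mathcal{A})=1$, and define $g(\alpha)=\phi(h(\alpha))$. Then
\[
\sum_\alpha|g(\alpha)|^p\omega(\alpha)^p\le\|\phi\|^p\|h\|_{\ell^p_\omega}<\infty,
\]
so $g\in\ell^p_\omega(\mathbb{X},\mathbb{C})$. Since $f$ is scalar-valued, $(\iota f\star h)(\alpha)=\sum_\beta f(\alpha-\beta)h(\beta)$. The series converges absolutely in $\mathcal{A}$, so $\phi$ commutes with it, which gives $\phi((\iota f\star h)(\alpha))=(f\star g)(\alpha)$. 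Applying $\phi$ to the identity $\iota f\star h=\delta_{\mathbf0}$ therefore yields $f\star g=\delta_{\mathbf0}$, and similarly $g\star f=\delta_{\mathbf0}$. Hence $f$ is invertible in the scalar algebra.

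The only delicate step is this scalarisation. It requires $\phi$ to be linear and continuous, and that is where the use of complex scalars and the Hahn--Banach theorem enters. The alternative of showing directly that $h$ is scalar-valued is unnecessary, though it follows a posteriori from uniqueness of two-sided inverses: $h=\iota g$.
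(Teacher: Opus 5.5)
Your proof is correct. For the converse direction it takes a different route from the paper.

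\textbf{The paper's route.} The paper takes the inverse $g\in\ell^p_\omega(\mathbb{X},\mathcal{A})$ and notes that $\widehat f$ is a nowhere-vanishing scalar function. Hence $\widehat g(z)=\widehat f(z)^{-1}1_\mathcal{A}$, and the paper concludes that $g$ itself is scalar-valued. That last step silently relies on uniqueness of Fourier coefficients for $\mathcal{A}$-valued absolutely summable sequences on $\mathbb{T}^d$ and $\mathbb{T}^\infty$. Justifying it typically means applying functionals that annihilate $1_\mathcal{A}$, invoking scalar uniqueness, and then Hahn--Banach.

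\textbf{Your route.} You scalarise the inverse directly with a single functional $\phi$ satisfying $\phi(1_\mathcal{A})=1$. You then verify $f\star g=\delta_{\mathbf 0}=g\star f$ by passing $\phi$ through the absolutely convergent convolution sums. This gives a self-contained, purely algebraic argument:
\begin{itemize}
\item it needs no Fourier transform and no uniqueness theorem on $\mathbb{T}^\infty$;
\item it treats $\mathbb{Z}^d$ and $\mathbb{Z}^\mathbb{N}$ identically;
\item it yields slightly more: since $\ell^p_\omega(\mathbb{X},\mathbb{C})$ is commutative, even a one-sided inverse of $\iota f$ in the vector-valued algebra produces a two-sided inverse of $f$ in the scalar algebra.
\end{itemize}

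\textbf{What the paper's route gives directly.} It shows at once that the given inverse is scalar-valued, with Fourier transform $1/\widehat f$. You recover the scalar-valuedness a posteriori from uniqueness of two-sided inverses ($h=\iota g$), as you note.
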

\begin{proof}
Note that $\mathbb{C}$ can be viewed as a subalgebra of $\mathcal{A}$ by the map $\mathbb{C} \ni \alpha \mapsto \alpha 1_\mathcal{A}\in\mathcal{A}$. For $f\in\ell^p_\omega(\mathbb{X},\mathbb{C})$, define $\mathbf{f}(n)=f(n) 1_\mathcal{A}\in\mathcal{A}$ for $n\in \mathbb{X}$. It is clear that $\mathbf{f}\in \ell^p_\omega(\mathbb{X},\mathcal{A})$. Using the map $\ell^p_\omega(\mathbb{X},\mathbb{C}) \ni f \mapsto \mathbf{f} \in \ell^p_\omega(\mathbb{X},\mathcal{A})$, it follows that $\ell^p_\omega(\mathbb{X},\mathbb{C})$ is a unital subalgebra of $\ell^p_\omega(\mathbb{X},\mathcal{A})$ and both have the same unit.

If $f \in \ell^p_\omega(\mathbb{X},\mathbb{C})$ is invertible in $\ell^p_\omega(\mathbb{X},\mathbb{C})$, then clearly it is invertible in $\ell^p_\omega(\mathbb{X},\mathcal{A})$. Let $f \in \ell^p_\omega(\mathbb{X},\mathbb{C})$ be invertible in $\ell^p_\omega(\mathbb{X},\mathcal{A})$ with an inverse $g$. Then $\widehat{f}$, the Fourier transform of $f$, is a complex valued function that is nowhere zero and so $\widehat{g}=1/\widehat{f}$ implies that $g\in \ell^p_\omega(\mathbb{X},\mathbb{C})$.
\end{proof}

We have the following result for inverse-closedness.
\begin{proposition} \label{prop:inverse-closedness}
Let $0 < p \leq 1$, $\mathbb{X}$ be $\mathbb{Z}^d$ or $\mathbb{Z}^\mathbb{N}$, $\omega$ be a weight on $\mathbb{X}$, and let $\mathcal{A}$ be a unital Banach algebra. Then $\ell^p_\omega(\mathbb{X},\mathcal{A})$ is inverse-closed if and only if $\omega$ satisfies the GRS condition.
\end{proposition}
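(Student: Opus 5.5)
We prove both directions by reducing to the scalar case, using Theorem~\ref{thm:Wiener-Domar-Zelazko}, Lemma~\ref{lem:l^p(X,C) subset l^p(X,A)} and the known vector-valued results.

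\emph{Sufficiency.} Suppose $\omega$ satisfies the GRS condition. Let $f\in\ell^p_\omega(\mathbb{X},\mathcal{A})$ be invertible in $\ell^1(\mathbb{X},\mathcal{A})$. Then $\widehat f(z)$ is invertible in $\mathcal{A}$ for every $z$, since the Fourier transform is multiplicative and $\widehat{\delta_0}\equiv 1_\mathcal{A}$. Theorems~\ref{thm1}, \ref{2var} and \ref{thi} (the last for $\mathbb{X}=\mathbb{Z}^\mathbb{N}$) then give a weight $\nu$ with an inverse of $f$ in $\ell^p_\nu$. By Remark~\ref{rem:inverse closedness if omega is GRS}, $\nu=\omega$ when $\omega$ satisfies GRS, so the inverse lies in $\ell^p_\omega(\mathbb{X},\mathcal{A})$.

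Two cases need care.
\begin{itemize}
\item For $\mathbb{Z}^d$ with $d\ge 3$, no vector-valued theorem was quoted. Here I would invoke the Bochner--Phillips style Gel'fand argument directly. Under GRS, the Gel'fand space of the commutative $p$-Banach algebra $\ell^p_\omega(\mathbb{Z}^d)$ is $\mathbb{T}^d$, because characters $\chi(e_j)=z_j$ satisfy $|z_j|^{\pm n}\le \omega(\pm n e_j)^{1/n}\to 1$. The vector-valued invertibility criterion (Bochner--Phillips, which holds for $p$-Banach algebras via \.Zelazko) then yields invertibility from pointwise invertibility of $\widehat f$.
\item For $\mathbb{Z}^2$, Theorem~\ref{2var} assumes property $\ast_\omega$ when all $\rho,\mu$ equal $1$. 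To avoid this, I would again rely on the direct Bochner--Phillips argument, with $\mathbb{T}^2$ as the Gel'fand space under GRS.
\end{itemize}
The same argument handles $\mathbb{Z}^\mathbb{N}$, with Gel'fand space $\mathbb{T}^\infty$.

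\emph{Necessity.} Suppose $\ell^p_\omega(\mathbb{X},\mathcal{A})$ is inverse-closed and let $f\in\ell^p_\omega(\mathbb{X},\mathbb{C})$ be invertible in $\ell^1(\mathbb{X},\mathbb{C})$. Via the embedding of Lemma~\ref{lem:l^p(X,C) subset l^p(X,A)}, $\mathbf f$ is invertible in $\ell^1(\mathbb{X},\mathcal{A})$, so it is invertible in $\ell^p_\omega(\mathbb{X},\mathcal{A})$. By the same lemma, $f$ is then invertible in $\ell^p_\omega(\mathbb{X},\mathbb{C})$. Hence the scalar algebra is inverse-closed.

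For $\mathbb{Z}^d$, Theorem~\ref{thm:Wiener-Domar-Zelazko} gives that $\omega$ satisfies GRS. For $\mathbb{Z}^\mathbb{N}$, I would argue by contradiction. If GRS fails at some $k$, then $\lim_n\omega(nk)^{1/n}=R>1$, a limit that exists by submultiplicativity (Fekete). Set $f=\delta_0-\lambda^{-1}\delta_k$ with $1<|\lambda|<R$. Its Fourier transform $1-\lambda^{-1}z^k$ never vanishes on $\mathbb{T}^\infty$, so $f$ is invertible in $\ell^1$, with inverse $\sum_{n\ge0}\lambda^{-n}\delta_{nk}$. This inverse is not in $\ell^p_\omega$, because $|\lambda|^{-n}\omega(nk)\ge (R/|\lambda|)^n\to\infty$ along $n$. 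Since inverses are unique in the commutative scalar algebra, $f$ is not invertible in $\ell^p_\omega$, a contradiction.

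The same explicit example actually proves necessity in all dimensions, which is cleaner. The main obstacle is the sufficiency direction for $\mathbb{Z}^d$ with $d\ge3$ and for $\mathbb{Z}^2$ without $\ast_\omega$. There the cited theorems do not directly apply, and we must rely on the Gel'fand/Bochner--Phillips machinery for $p$-Banach algebras together with the identification of the maximal ideal space under GRS.
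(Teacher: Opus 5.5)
Your proof is correct. Its skeleton matches the paper's: sufficiency comes from the vector-valued Wiener theorems together with the remark that the constructed $\nu$ equals $\omega$ under GRS, and necessity comes from passing to the scalar algebra through the embedding lemma. The differences lie in how much you prove rather than cite.

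For necessity, the paper invokes Theorem~\ref{thm:Wiener-Domar-Zelazko} for $\mathbb{Z}^d$. For $\mathbb{Z}^\mathbb{N}$ it simply asserts the scalar statement as a known fact. Your element $\delta_0-\lambda^{-1}\delta_k$, with Fekete giving $\omega(nk)\geq R^n$, proves this from scratch and uniformly for every $\mathbb{X}$, so it fills a genuine citation gap. You can even skip the scalar reduction. The element $(\delta_0-\lambda^{-1}\delta_k)1_\mathcal{A}$ has the two-sided inverse $\sum_{n\geq 0}\lambda^{-n}\delta_{nk}1_\mathcal{A}$ in $\ell^1(\mathbb{X},\mathcal{A})$. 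Two-sided inverses are unique in any unital algebra, and this inverse is not in $\ell^p_\omega$.

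For sufficiency, your direct Gel'fand/Bochner--Phillips argument for $d\geq 3$ supplies what the paper leaves as ``can be modified accordingly''. The character estimate has a slip: it should read $|z_j|^{\pm n}\leq\omega(\pm n e_j)$, which follows from $|\chi(x)|^p\leq\|x\|_{\ell^p_\omega}$. Equivalently, $|z_j|^{\pm 1}\leq\omega(\pm n e_j)^{1/n}\to 1$, with $\chi(\delta_{e_j})=z_j$.

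For $\mathbb{Z}^2$ the detour is unnecessary, because GRS already implies condition (2) of property $\ast_\omega$. For $m_1<0$, each product $\omega(m_1,m_2)^{1/m_1}\omega(m_1,-m_2)^{1/m_1}$ is at most $1$. Taking $m_2=0$ and $m_1=-n$ gives $\omega(-n,0)^{-2/n}\to 1$ by GRS, so the supremum equals $1$. Symmetrically, the infimum over $m_1\in\mathbb{N}$ equals $1$ (use $\omega(n,0)^{2/n}\to 1$), and the same holds in the second variable. Hence Theorem~\ref{2var} applies directly to every GRS weight on $\mathbb{Z}^2$.
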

\begin{proof}
The case of $\mathbb{X}=\mathbb{Z}^d$ follows from particular case of Remark \ref{rem:inverse closedness if omega is GRS} and Theorem \ref{thm:Wiener-Domar-Zelazko} along with Lemma \ref{lem:l^p(X,C) subset l^p(X,A)}. The case of $\mathbb{X}=\mathbb{Z}^\mathbb{N}$ follows from Remark \ref{rem:inverse closedness if omega is GRS} and the fact that $\ell^p_\omega(\mathbb{Z}^\mathbb{N},\mathbb{C})$ is inverse-closed if and only if $\omega$ satisfies the GRS condition combined with Lemma \ref{lem:l^p(X,C) subset l^p(X,A)}.
\end{proof}

Next, we state some lemmas describing the relations between the weights in the family when a weight in the family is admissible or not admissible.

\begin{lemma} \label{lem:weights on Z admissible} 
Let $\{\omega_n\}_{n\in\mathbb{N}}$ be a family of weights on $\mathbb{Z}$ such that $\omega_n\leq\omega_{n+1}$ for all $n\in\mathbb{N}$, and let $\{\nu_n\}_{n\in\mathbb{N}}$ be a family of weights on $\mathbb{Z}$ such that $\nu_n\geq\nu_{n+1}$ for all $n\in\mathbb{N}$. 
\begin{enumerate}
    \item \label{lem:increasing weights on Z admissible} If $\omega_k$ is admissible for some $k$, then $\omega_n$ is admissible for all $n\leq k$.
        
    \item \label{lem:increasing weights on Z not admissible} If $\omega_k$ is not admissible for some $k$, then $\omega_n$ is not admissible for all $n\geq k$.

    \item \label{lem:decreasing weights on Z admissible} If $\nu_k$ is admissible for some $k$, then $\nu_n$ is admissible for all $n\geq k$.

    \item \label{lem:decreasing weights on Z not admissible} If $\nu_k$ is not admissible for some $k$, then $\nu_k$ is not admissible for all $n\leq k$.
\end{enumerate}
\end{lemma}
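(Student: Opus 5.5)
The whole lemma rests on one monotonicity observation: if $\eta\leq\omega$ are weights on $\mathbb{Z}$, then
\[
\rho_{1,\omega}\leq\rho_{1,\eta}\leq 1\leq\rho_{2,\eta}\leq\rho_{2,\omega}.
\]
Combined with the characterization recalled in the preliminaries — a weight $\omega$ on $\mathbb{Z}$ is admissible if and only if $\rho_{1,\omega}=1=\rho_{2,\omega}$ — this gives all four parts. The plan is to prove the observation first and then read off each item.

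\emph{The observation.} For $n>0$ the map $t\mapsto t^{1/n}$ is increasing on $[1,\infty)$, so $\eta(n)\leq\omega(n)$ gives $\eta(n)^{1/n}\leq\omega(n)^{1/n}$. Taking the infimum over $n>0$ yields $\rho_{2,\eta}\leq\rho_{2,\omega}$. For $n<0$ the exponent $1/n$ is negative, so $t\mapsto t^{1/n}$ is decreasing, and $\eta(n)\leq\omega(n)$ gives $\eta(n)^{1/n}\geq\omega(n)^{1/n}$. Taking the supremum over $n<0$ yields $\rho_{1,\eta}\geq\rho_{1,\omega}$.

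Two technical points need care.
\begin{itemize}
\item The index $n=0$ in \eqref{def:rho for Z} makes $\omega(0)^{1/0}$ meaningless. I will interpret the defining sets as ranging over $n<0$ and $n>0$ respectively, as in \eqref{eq:rho for R}.
\item The bounds $\rho_{1,\cdot}\leq1\leq\rho_{2,\cdot}$ come from $\omega\geq1$: for $n>0$, $\omega(n)^{1/n}\geq1$, and for $n<0$, $\omega(n)^{1/n}\leq1$.
\end{itemize}
This ordering of the $\rho$'s is the main step. The only real obstacle is tracking the sign reversal for negative $n$.

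\emph{Parts \eqref{lem:increasing weights on Z admissible} and \eqref{lem:decreasing weights on Z admissible}.} In part \eqref{lem:increasing weights on Z admissible}, for $n\leq k$ we have $\omega_n\leq\omega_k$ by transitivity of the chain $\omega_1\leq\omega_2\leq\cdots$. Admissibility of $\omega_k$ gives $\rho_{1,\omega_k}=\rho_{2,\omega_k}=1$. The observation then squeezes $1=\rho_{1,\omega_k}\leq\rho_{1,\omega_n}\leq1$ and $1\leq\rho_{2,\omega_n}\leq\rho_{2,\omega_k}=1$, so $\omega_n$ is admissible. Part \eqref{lem:decreasing weights on Z admissible} is the same argument: for $n\geq k$ we have $\nu_n\leq\nu_k$.

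\emph{Parts \eqref{lem:increasing weights on Z not admissible} and \eqref{lem:decreasing weights on Z not admissible}.} These are the contrapositives. In part \eqref{lem:increasing weights on Z not admissible}, if some $\omega_n$ with $n\geq k$ were admissible, then part \eqref{lem:increasing weights on Z admissible} applied with $n$ in place of $k$ would make $\omega_k$ admissible, a contradiction. Part \eqref{lem:decreasing weights on Z not admissible} follows from part \eqref{lem:decreasing weights on Z admissible} in the same way. I read its conclusion as "$\nu_n$ is not admissible for all $n\leq k$", correcting an apparent typo in the statement.
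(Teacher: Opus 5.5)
Your proposal is correct and is essentially the paper's proof: the paper also reduces everything to the monotonicity $\rho_{1,\omega_{n+1}}\leq\rho_{1,\omega_n}\leq 1\leq\rho_{2,\omega_n}\leq\rho_{2,\omega_{n+1}}$ (reversed for $\{\nu_n\}$), combined with the characterization of admissibility by $\rho_{1}=1=\rho_{2}$. The differences are only cosmetic: the paper compares consecutive indices and iterates, and proves parts (ii) and (iv) by directly propagating $\rho_1<1$ or $\rho_2>1$ rather than by contraposition, whereas you compare $\omega_n$ with $\omega_k$ in one step via transitivity.
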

\begin{proof}
For $n\in\mathbb{N}$, $\omega_n(m)\leq\omega_{n+1}(m)$ for all $m\in\mathbb{Z}$ and so
\begin{align*} 
    \omega_{n+1}(-m)^{-\frac{1}{m}} \leq \omega_n(-m)^{-\frac{1}{m}} \quad \text{and} \quad \omega_n(m)^\frac{1}{m}\leq\omega_{n+1}(m)^\frac{1}{m} \quad \text{for all} \quad m\in\mathbb{N}. 
\end{align*}
This implies that, from \eqref{def:rho for Z}, 
\begin{align} \label{eq:rho inequality for increasing weights on Z}
    \rho_{1,\omega_{n+1}}\leq\rho_{1,\omega_n} \leq 1 \leq \rho_{2,\omega_n} \leq \rho_{2,\omega_{n+1}}.
\end{align}
Similarly, since $\nu_{n+1} \leq \nu_n$, we have
\begin{align} \label{eq:rho inequality for decreasing weights on Z}
    \rho_{1,\nu_n} \leq \rho_{1,\nu_{n+1}} \leq 1 \leq \rho_{2,\nu_{n+1}} \leq \rho_{2,\nu_{n}}.
\end{align}

\eqref{lem:increasing weights on Z admissible} 
If $k=1$, there is nothing to prove. Let $k>1$.  Since $\omega_k$ is admissible, $\rho_{1,\omega_k}=1=\rho_{2,\omega_k}$. Taking $n=k-1$ in \eqref{eq:rho inequality for increasing weights on Z}, it follows that $\omega_{k-1}$ is admissible. The result follows by repeating the same argument.

\eqref{lem:increasing weights on Z not admissible} 
Since $\omega_k$ is not admissible, at least one of $\rho_{1,\omega_k}<1$ or $\rho_{2,\omega_k}>1$ holds. Taking $n=k$ in \eqref{eq:rho inequality for increasing weights on Z}, it follows that at least one of $\rho_{1,\omega_{k+1}}<1$ or $\rho_{2,\omega_{k+1}}>1$ holds; equivalently $\omega_{k+1}$ is also not admissible. Again, the result follows by repeated arguments.

\eqref{lem:decreasing weights on Z admissible} and \eqref{lem:decreasing weights on Z not admissible} follows by similar arguments using \eqref{eq:rho inequality for decreasing weights on Z}.

\end{proof}

\begin{lemma} \label{lem:weights on Z^2 admissible} 
Let $\{\omega_n\}_{n\in\mathbb{N}}$ be a family of weights on $\mathbb{Z}^2$ such that $\omega_n\leq\omega_{n+1}$ for all $n\in\mathbb{N}$, and let $\{\nu_n\}_{n\in\mathbb{N}}$ be a family of weights on $\mathbb{Z}^2$ such that $\nu_n\geq\nu_{n+1}$ for all $n\in\mathbb{N}$. 
\begin{enumerate}
    \item \label{lem:increasing weights on Z^2 admissible} If $\omega_k$ is admissible for some $k$, then $\omega_n$ is admissible for all $n\leq k$.
        
    \item \label{lem:increasing weights on Z^2 not admissible} If $\omega_k$ is not admissible for some $k$, then $\omega_n$ is not admissible for all $n\geq k$.

    \item \label{lem:decreasing weights on Z^2 admissible} If $\nu_k$ is admissible for some $k$, then $\nu_n$ is admissible for all $n\geq k$.

    \item \label{lem:decreasing weights on Z^2 not admissible} If $\nu_k$ is not admissible for some $k$, then $\nu_k$ is not admissible for all $n\leq k$.

    % \item \label{lem:increasing weights on Z^2 admissible} If $\omega_k$ is admissible some $k$, then $\omega_n$ is admissible for all $n\leq k$.
        
    % \item \label{lem:increasing weights on Z^2 not admissible} If $\omega_k$ is such that at least one of $\rho_{1,\omega_k}<1$, $1<\rho_{2,\omega_k}$, $\mu_{1,\omega_k}<1$ or $1<\mu_{2,\omega_k}$ holds for some $k$, then the same follows for $\omega_n$ for all $n\geq k$.

    % \item \label{lem:decreasing weights on Z^2 admissible} If $\nu_k$ is such that $\rho_{1,\nu_k}=\rho_{2,\nu_k}=\mu_{1,\nu_k}=\mu_{2,\nu_k}=1$ for some $k$, then the same follows for $\nu_n$ for all $n\geq k$.

    % \item \label{lem:decreasing weights on Z^2 not admissible} If $\nu_k$ is such that at least one of $\rho_{1,\nu_k}<1$, $1<\rho_{2,\nu_k}$, $\mu_{1,\nu_k}<1$ or $1<\mu_{2,\nu_k}$ holds for some $k$, then the same follows for $\nu_n$ for all $n\leq k$.
\end{enumerate}
\end{lemma}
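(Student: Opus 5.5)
The plan is to mirror the proof of Lemma~\ref{lem:weights on Z admissible}, using the two-variable quantities in \eqref{def:rho for Z^2} in place of those in \eqref{def:rho for Z}. The first step is a monotonicity inequality for these four quantities.

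Suppose $\omega_n\leq\omega_{n+1}$ pointwise on $\mathbb{Z}^2$. For $(m_1,m_2)\in\mathbb{N}\times\mathbb{Z}$, raising to the positive power $1/m_1$ preserves order, so
\begin{align*}
\omega_n(m_1,m_2)^{\frac{1}{m_1}}\leq\omega_{n+1}(m_1,m_2)^{\frac{1}{m_1}}.
\end{align*}
Raising to the negative power $-1/m_1$ reverses order, so
\begin{align*}
\omega_{n+1}(-m_1,m_2)^{-\frac{1}{m_1}}\leq\omega_n(-m_1,m_2)^{-\frac{1}{m_1}}.
\end{align*}
Taking infima and suprema over $\mathbb{N}\times\mathbb{Z}$, and then square roots (which are monotone), gives
\begin{align*}
\rho_{1,\omega_{n+1}}\leq\rho_{1,\omega_n}\leq1\leq\rho_{2,\omega_n}\leq\rho_{2,\omega_{n+1}}.
\end{align*}
The identical argument with the roles of the coordinates swapped, that is, with exponent $1/m_2$ over $\mathbb{Z}\times\mathbb{N}$, gives
\begin{align*}
\mu_{1,\omega_{n+1}}\leq\mu_{1,\omega_n}\leq1\leq\mu_{2,\omega_n}\leq\mu_{2,\omega_{n+1}}.
\end{align*}
For the decreasing family $\{\nu_n\}$, the same reasoning yields the reversed chains
\begin{align*}
\rho_{1,\nu_n}\leq\rho_{1,\nu_{n+1}}\leq1\leq\rho_{2,\nu_{n+1}}\leq\rho_{2,\nu_n},\qquad \mu_{1,\nu_n}\leq\mu_{1,\nu_{n+1}}\leq1\leq\mu_{2,\nu_{n+1}}\leq\mu_{2,\nu_n}.
\end{align*}

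Then the four items follow by induction.
\begin{enumerate}
\item If $\omega_k$ is admissible, all four of its quantities equal $1$. The chains with $n=k-1$ squeeze $\rho_{1,\omega_{k-1}},\rho_{2,\omega_{k-1}},\mu_{1,\omega_{k-1}},\mu_{2,\omega_{k-1}}$ between $1$ and $1$, so $\omega_{k-1}$ is admissible. Iterating downward gives the claim for all $n\leq k$.
\item If $\omega_k$ is not admissible, at least one strict inequality holds among $\rho_{1,\omega_k}<1$, $\rho_{2,\omega_k}>1$, $\mu_{1,\omega_k}<1$, $\mu_{2,\omega_k}>1$. The chains with $n=k$ propagate that strict inequality to $\omega_{k+1}$, and iterating upward gives the claim for all $n\geq k$.
\item and (4) are symmetric to (1) and (2), using the reversed chains. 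Item (4) is to be read as ``$\nu_n$ is not admissible for all $n\leq k$'', correcting the evident typo in the statement.
\end{enumerate}

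No step is a genuine obstacle. The only point needing care is the sign bookkeeping: negative exponents reverse the order, and this is what makes the $\rho_1,\mu_1$ chains run opposite to the $\rho_2,\mu_2$ chains. One should also note that the supremum and infimum in \eqref{def:rho for Z^2} range over the full sets $\mathbb{N}\times\mathbb{Z}$ and $\mathbb{Z}\times\mathbb{N}$, so the pointwise inequality transfers term by term.
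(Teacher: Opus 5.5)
Your proposal is correct and follows the paper's proof essentially verbatim: you establish the monotonicity chains for $\rho_{1},\rho_{2},\mu_{1},\mu_{2}$ under the pointwise ordering of the weights, then run the squeeze and propagation argument from the one-variable lemma. Your reading of item (4) as ``$\nu_n$ is not admissible for all $n\leq k$'' is the intended one.
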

\begin{proof}
Let $n\in\mathbb{N}$. Recall from \eqref{def:rho for Z^2}, 
\begin{align*} 
    \rho_{1,\omega_n}=\sqrt{\sup_{(m_1,m_2)\in\mathbb{N}\times\mathbb{Z}}\omega_n(-m_1,m_2)^{-\frac{1}{m_1}}}, \quad \rho_{2,\omega_n}=\sqrt{\inf_{(m_1,m_2)\in\mathbb{N}\times\mathbb{Z}}\omega_n(m_1,m_2)^{\frac{1}{m_1}}}, \\ 
    \mu_{1,\omega_n}=\sqrt{\sup_{(m_1,m_2)\in\mathbb{Z}\times\mathbb{N}}\omega_n(m_1,-m_2)^{-\frac{1}{m_2}}}, \quad \mu_{2,\omega_n}=\sqrt{\inf_{(m_1,m_2)\in\mathbb{Z}\times\mathbb{N}}\omega_n(m_1,m_2)^{\frac{1}{m_2}}}.
\end{align*}
Since $\omega_n(m_1,m_2)\leq\omega_{n+1}(m_1,m_2)$ for all $(m_1,m_2)\in\mathbb{Z}^2$, we have
\begin{align*} 
\omega_{n+1}(-m_1,m_2)^{-\frac{1}{m_1}} \leq \omega_n(-m_1,m_2)^{-\frac{1}{m_1}}, \quad 
\omega_n(m_1,m_2)^{\frac{1}{m_1}} \leq \omega_{n+1}(m_1,m_2)^{\frac{1}{m_1}}, \\
\omega_{n+1}(m_1,-m_2)^{-\frac{1}{m_2}} \leq \omega_n(m_1,-m_2)^{-\frac{1}{m_2}} , \quad 
\omega_n(m_1,m_2)^{\frac{1}{m_2}} \leq \omega_{n+1}(m_1,m_2)^{\frac{1}{m_2}}.
\end{align*}
This implies that 
\begin{align} \label{eq:rho inequality for increasing weights on Z^2}
    \rho_{1,\omega_{n+1}}\leq\rho_{1,\omega_n} \leq 1 \leq \rho_{2,\omega_n} \leq \rho_{2,\omega_{n+1}} \quad \text{and} \quad \mu_{1,\omega_{n+1}}\leq\mu_{1,\omega_n} \leq 1 \leq \mu_{2,\omega_n} \leq \mu_{2,\omega_{n+1}}.
\end{align}
Similarly, $\nu_{n+1} \leq \nu_n$ gives
\begin{align} \label{eq:rho inequality for decreasing weights on Z^2}
    \rho_{1,\nu_n} \leq \rho_{1,\nu_{n+1}} \leq 1 \leq \rho_{2,\nu_{n+1}} \leq \rho_{2,\nu_n} \quad \text{and} \quad \mu_{1,\nu_n}\leq\mu_{1,\nu_{n+1}} \leq 1 \leq \mu_{2,\nu_{n+1}} \leq \mu_{2,\nu_n}.
\end{align}
Now the arguments are same as that given in previous Lemma~\ref{lem:weights on Z admissible}, using \eqref{eq:rho inequality for increasing weights on Z^2} and \eqref{eq:rho inequality for decreasing weights on Z^2} in place of \eqref{eq:rho inequality for increasing weights on Z} and \eqref{eq:rho inequality for decreasing weights on Z}, respectively.
    
\end{proof}

\begin{lemma} \label{lem:weights on Z^N admissible} 
Let $\{\omega_n\}_{n\in\mathbb{N}}$ be a family of weights on $\mathbb{Z}^\mathbb{N}$ such that $\omega_n\leq\omega_{n+1}$ for all $n\in\mathbb{N}$, and let $\{\nu_n\}_{n\in\mathbb{N}}$ be a family of weights on $\mathbb{Z}$ such that $\nu_n\geq\nu_{n+1}$ for all $n\in\mathbb{N}$. 
\begin{enumerate}
    \item \label{lem:increasing weights on Z^N admissible} If $\omega_k$ is admissible for some $k$, then $\omega_n$ is admissible for all $n\leq k$.
        
    \item \label{lem:increasing weights on Z^N not admissible} If $\omega_k$ is such that at least one of $\rho_{i,\omega_k}<1$, $1<\rho_{i,\omega_k}$, $\mu_{i,\omega_k}<1$ or $1<\mu_{i,\omega_k}$ holds for some $i,k\in\mathbb{N}$, then the same follows for $\omega_n$ for all $n\geq k$. Equivalently, $S_{\omega_k}\subset S_{\omega_n}$ for all $n\geq k$, or if $\omega_k$ is $m$-nonadmissible for some $k\in\mathbb{N}$ and $m\in\mathbb{N}\cup\{\infty\}$, then $\omega_n$ is $m$-nonadmissible for all $n\geq k$.

    \item \label{lem:decreasing weights on Z^N admissible} If $\nu_k$ is admissible for some $k$, then $\nu_n$ is admissible for all $n\geq k$.

    \item \label{lem:decreasing weights on Z^N not admissible} If $\nu_k$ is such that at least one of $\rho_{i,\nu_k}<1$, $1<\rho_{i,\nu_k}$, $\mu_{i,\nu_k}<1$ or $1<\mu_{i,\nu_k}$ holds for some $k$, then the same follows for $\nu_n$ for all $n\leq k$. Equivalently, $S_{\omega_k}\subset S_{\omega_n}$ for all $n\leq k$, or if $\nu_k$ is $m$-nonadmissible for some $k\in\mathbb{N}$ and $m\in\mathbb{N}\cup\{\infty\}$, then $\nu_n$ is $m$-nonadmissible for all $n\leq k$.
\end{enumerate}
\end{lemma}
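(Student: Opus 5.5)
The argument runs parallel to Lemma~\ref{lem:weights on Z admissible} and Lemma~\ref{lem:weights on Z^2 admissible}: first compare the quantities $\rho_{i,\cdot}$ and $\mu_{i,\cdot}$ coordinatewise, then read off the admissibility statements from the definition of $S_\omega$ in \eqref{def:S_omega}. (In the statement, $\nu_n$ is evidently meant to be a weight on $\mathbb{Z}^\mathbb{N}$, and in the last item $S_{\nu_k}\subset S_{\nu_n}$ is intended; I treat it that way.)

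Fix $i,n\in\mathbb{N}$ and $\alpha\in\mathbb{Z}^\mathbb{N}$ with $\alpha_i\in\mathbb{N}$. Since $\omega_n\le\omega_{n+1}$ pointwise and $t\mapsto t^{1/\alpha_i}$ is increasing on $[1,\infty)$, we get
\begin{align*}
\omega_n(\alpha)^{1/\alpha_i}\le\omega_{n+1}(\alpha)^{1/\alpha_i}.
\end{align*}
Applying the same comparison at the point with $i$-th coordinate $-\alpha_i$ and raising to the power $-1/\alpha_i$ reverses it:
\begin{align*}
\omega_{n+1}(\dots,-\alpha_i,\dots)^{-1/\alpha_i}\le\omega_n(\dots,-\alpha_i,\dots)^{-1/\alpha_i}.
\end{align*}
Taking the supremum, respectively the infimum, over the admissible index sets in \eqref{def:rho for Z^infty} yields
\begin{align*}
\rho_{i,\omega_{n+1}}\le\rho_{i,\omega_n}\le1\le\mu_{i,\omega_n}\le\mu_{i,\omega_{n+1}}\quad(i\in\mathbb{N}).
\end{align*}
The same reasoning with $\nu_{n+1}\le\nu_n$ gives
\begin{align*}
\rho_{i,\nu_n}\le\rho_{i,\nu_{n+1}}\le1\le\mu_{i,\nu_{n+1}}\le\mu_{i,\nu_n}.
\end{align*}

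For (1), admissibility of $\omega_k$ means $\rho_{i,\omega_k}=1=\mu_{i,\omega_k}$ for every $i$. The chain squeezes $\rho_{i,\omega_n}$ and $\mu_{i,\omega_n}$ to $1$ for each $n\le k$, by induction downward from $k$, so $S_{\omega_n}=\emptyset$.

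For (2), note that by $\rho_{i,\cdot}\le1\le\mu_{i,\cdot}$ the only possible deviations are $\rho_{i,\omega_k}<1$ or $\mu_{i,\omega_k}>1$; the other listed alternatives are vacuous. Either deviation persists for $\omega_n$ with $n\ge k$ by the monotone chain. Hence $i\in S_{\omega_k}$ implies $i\in S_{\omega_n}$, that is, $S_{\omega_k}\subset S_{\omega_n}$.

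Items (3) and (4) follow symmetrically from the chain for $\{\nu_n\}$.

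The one point needing care is the phrase ``$m$-nonadmissible for all $n\ge k$''. Inclusion of sets only gives $|S_{\omega_n}|\ge m$, so $\omega_n$ is $m'$-nonadmissible for some $m'\ge m$; equality of cardinalities need not hold. I would state the conclusion in this form: the cardinality is monotone, and $\infty$-nonadmissibility propagates exactly. I expect this reinterpretation to be the main, albeit minor, obstacle; everything else is routine.
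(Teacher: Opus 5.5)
Your proposal is correct and is essentially the paper's proof: the paper's argument consists of the two monotone chains $\rho_{i,\omega_{n+1}}\le\rho_{i,\omega_n}\le1\le\mu_{i,\omega_n}\le\mu_{i,\omega_{n+1}}$ and the reversed chain for $\{\nu_n\}$, with the admissibility statements then read off exactly as in the one- and two-variable lemmas. Your caveat is also well founded: the inclusion $S_{\omega_k}\subset S_{\omega_n}$ only gives $|S_{\omega_n}|\ge m$ (e.g.\ $\omega_1(\alpha)=2^{|\alpha_1|}\le\omega_2(\alpha)=2^{|\alpha_1|+|\alpha_2|}$ are $1$- and $2$-nonadmissible), so that clause should read ``at least $m$-nonadmissible'', which is all that the later constructions need.
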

\begin{proof}
The proof shall be clear now as it essentially follows the same techniques as in last two lemmas. The two necessary observations are as follows.
For each $i,n\in\mathbb{N}$, 
\begin{align} \label{eq:rho inequality for increasing weights on Z^N}
    \rho_{i,\omega_{n+1}}\leq\rho_{i,\omega_n} \leq 1 \leq \mu_{i,\omega_n} \leq \mu_{i,\omega_{n+1}}
\end{align}
and
\begin{align} \label{eq:rho inequality for decreasing weights on Z^N}
    \rho_{i,\nu_n} \leq \rho_{i,\nu_{n+1}} \leq 1 \leq \mu_{i,\nu_{n+1}} \leq \mu_{i,\nu_n}.
\end{align}
\end{proof}

\section{Wiener type theorem for countable limit of quasi-Beurling algebras} \label{sec:CLBS}
Now, we are ready for our main results for countable projective and inductive limits of (quasi-)Beurling algebras. First, we see the case of one dimension.

\subsection{One dimensional case} \label{subsec:d=1}
In this subsection, we provide the results for one dimension, that is, for $d=1$. The following is our first main theorem which is for the projective limit.

\begin{theorem} \label{thm:proj limit d=1}
Let $\displaystyle \mathcal{X}=\bigcap_{n\in\mathbb{N}}\ell^{p_n}_{\omega_n}(\mathbb{Z},\mathcal{A})$ be an algebra of Type-I. If $f\in\mathcal{X}$ is such that $\widehat{f}(z)$ is left invertible (respectively, right invertible, invertible) in $\mathcal{A}$ for all $z\in\mathbb{T}$, then there exists a family of weights $\{\nu_n\}_{n\in\mathbb{N}}$ on $\mathbb{Z}$ such that for all $n\in \mathbb N$,
\begin{enumerate}
    \item $\nu_n\leq\omega_n$,
    \item $\nu_n$ is constant (nonconstant) if $\omega_n$ is constant (nonconstant),
    \item $\nu_n$ is admissible (nonadmissible) if $\omega_n$ is constant (nonadmissible),
    \item $\nu_n\leq\nu_{n+1}$, and 
    \item a left inverse of (respectively, right inverse, inverse) $f$ is in $\bigcap_{n\in\mathbb{N}}\ell^{p_n}_{\nu_n}(\mathbb{Z},\mathcal{A})$, which is an algebra of Type-I.
\end{enumerate} 
In particular, $\mathcal{X}$ is inverse-closed if and only if $\omega_n$ is admissible for all $n\in\mathbb{N}$.
\end{theorem}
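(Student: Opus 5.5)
The plan is to use a single left inverse $g$ of $f$ for all indices and to build each $\nu_n$ from the Gel'fand data $\rho_{1,\omega_n},\rho_{2,\omega_n}$ of \eqref{def:rho for Z}. This matters because in the noncommutative setting left inverses are not unique. Applying Theorem~\ref{thm1} to each $n$ separately would give weights $\nu_n'$ and left inverses $g_n$ that may differ and need not be monotone in $n$, so that alone is not enough.

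\emph{Step 1 (splitting into admissible and nonadmissible indices).} By Lemma~\ref{lem:weights on Z admissible}, the indices $n$ with $\omega_n$ admissible form an initial segment $\{1,\dots,K\}$, with $K\in\{0,1,2,\dots\}\cup\{\infty\}$.
\begin{itemize}
\item If $K=\infty$, I apply Theorem~\ref{thm1} in each $\ell^{p_n}_{\omega_n}(\mathbb{Z},\mathcal{A})$ and take $\nu_n=\omega_n$, using Remark~\ref{rem:inverse closedness if omega is GRS}.
\item If $K\geq1$ is finite, I take a left inverse $g\in\ell^{p_K}_{\omega_K}(\mathbb{Z},\mathcal{A})$ and set $\nu_n=\omega_n$ for $n\le K$. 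Then $g\in\ell^{p_n}_{\omega_n}$ for all $n\le K$ by Lemma~\ref{lem:weighted Fourier algebras inclusions}, since $p_n\ge p_K$ and $\omega_n\le\omega_K$.
\end{itemize}

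\emph{Step 2 (nonadmissible indices, where the real work lies).} For $n>K$, \eqref{eq:rho inequality for increasing weights on Z} shows that the annuli $\Gamma(\rho_{1,\omega_n},\rho_{2,\omega_n})$ increase with $n$ and are nondegenerate. On $\Gamma(\rho_{1,\omega_{K+1}},\rho_{2,\omega_{K+1}})$ the transform $\widehat f(z)=\sum_m f(m)z^m$ is defined and continuous. Since it is left invertible on $\mathbb{T}$ and left invertible elements form an open set, $\widehat f$ stays left invertible on a thin annulus $\Gamma(r,s)$ with
$$\rho_{1,\omega_{K+1}}\le r\le 1\le s\le\rho_{2,\omega_{K+1}},$$
and with $r<1$ (resp.\ $s>1$) whenever $\rho_{1,\omega_{K+1}}<1$ (resp.\ $\rho_{2,\omega_{K+1}}>1$). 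Following the construction behind Theorem~\ref{thm1}, I take the exponential weight $w(m)=s^m$ for $m\ge0$ and $w(m)=r^m$ for $m<0$. Then $w\le\omega_{K+1}$, because $\omega(m)\ge\rho_{2,\omega}^m$ for $m\ge0$ and $\omega(m)\ge\rho_{1,\omega}^{m}$ for $m<0$. I obtain $g$ as a left inverse of $f$ in $\ell^{p}_{w'}$, where $w'$ is the exponential weight of a slightly larger annulus on which $\widehat f$ is still left invertible. The coefficients of $g$ then decay geometrically, faster than any $w$ attached to a strictly smaller annulus, so $g\in\ell^{q}_{w}$ for every $q>0$, and in particular for every $p_n$.

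\emph{Step 3 (defining $\nu_n$ for $n>K$).} I set $\nu_n=\max(\omega_K,w)$ when $K\ge1$, and $\nu_n=w$ when $K=0$. This weight has the required properties:
\begin{itemize}
\item it is independent of $n$, hence monotone, and it dominates $\nu_K=\omega_K$;
\item it is nonconstant and nonadmissible because $w$ is;
\item it satisfies $\nu_n\le\omega_n$, since $\omega_K\le\omega_n$ and $w\le\omega_{K+1}\le\omega_n$.
\end{itemize}

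\emph{Main obstacle.} The step I expect to be hardest is twofold.
\begin{itemize}
\item The maximum of two weights is not automatically submultiplicative. To fix this I would instead use $\omega_K\cdot w$ if it stays below $\omega_n$, or else shrink $(r,s)$ further.
\item One must check that the $g$ chosen in Step 1 (from the admissible part) coincides with the $g$ chosen in Step 2. I would resolve this by choosing $g$ once in Step 2, inside the smallest algebra $\ell^{p_K}_{\max(\omega_K,w')}$, via Theorem~\ref{thm1} applied to that weight.
\end{itemize}

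\emph{Inverse case and the final equivalence.} In the inverse case uniqueness removes the second issue. For the last assertion:
\begin{itemize}
\item If all $\omega_n$ are admissible, then $\nu_n=\omega_n$ for every $n$, so $\mathcal X$ is inverse-closed.
\item Conversely, if some $\omega_k$ is nonadmissible, Proposition~\ref{prop:inverse-closedness} gives $h\in\ell^1_{\omega_k}$ that is invertible in $\ell^1$ but whose inverse is not in $\ell^1_{\omega_k}$. Concretely $h=\delta_0-c\,\delta_{\pm1}$, with $c$ chosen so that $\widehat h$ vanishes inside the annulus but not on $\mathbb{T}$. This $h$ is finitely supported, so it lies in $\mathcal X$, while its inverse does not, contradicting inverse-closedness.
\end{itemize}
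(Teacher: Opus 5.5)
\textbf{Gap in item (v) for one-sided invertibility.} Your weights are fine, and your geometric-decay argument does give a common left inverse when $\rho_{1,\omega_{K+1}}<1<\rho_{2,\omega_{K+1}}$. It fails when the nonadmissibility is one-sided and $\{p_n\}$ is not eventually constant.

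Nonadmissibility only gives $\rho_{1,\omega_{K+1}}<1$ \emph{or} $\rho_{2,\omega_{K+1}}>1$. Suppose $\rho_{2,\omega_n}=1$ for all $n$. Then $s=s'=1$, so $w=w'\equiv 1$ on $m\ge 0$, and there is no geometric decay on that side. From $g\in\ell^{p}_{w'}$ (or $g\in\ell^{p_K}_{\max(\omega_K,w')}$) at one fixed exponent you only get summability of $\|g(m)\|^{p}$ (weighted by $\omega_K$ when $K\ge1$) on $m\ge0$ for that $p$, not for the smaller $p_n$. So the claim ``$g\in\ell^q_w$ for every $q>0$'' is false there.

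Since left inverses are not unique, an existence result at one exponent cannot repair this. Take $\mathcal A=B(\ell^2)$, $S$ the unilateral shift, $P_0$ the projection onto $\ker S^*$, $f=S\delta_0$, and $\omega_n(m)=\max(1,2^{-m})$ for all $n$. Then every $g=S^*\delta_0+\sum_{m\ge0}c_mP_0\delta_m$ with $(c_m)\in\ell^{p_1}$ is a left inverse of $f$ in $\ell^{p_1}_{w'}(\mathbb Z,\mathcal A)$. Such a $g$ is not in $\ell^{p_2}$ when $(c_m)\notin\ell^{p_2}$.

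Your case $K=\infty$ has the same unresolved problem, because there you apply Theorem~\ref{thm1} index by index despite your own warning. Only for two-sided inverses does uniqueness in $\ell^1(\mathbb Z,\mathcal A)$ settle this. That is also the only case in which the paper's proof is complete. It applies Theorem~\ref{thm1} separately in each $\ell^{p_n}_{\nu_n}$, using nested annuli $\Gamma(r_n,s_n)$ and $\nu_n\equiv 1$ before the first nonadmissible index, and never produces a common left inverse.

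\textbf{How to close it.} What is missing is a reduction from one-sided to two-sided invertibility. Your choice makes this easy: $\nu_n$ is independent of $n$ for $n>K$, and $\nu_n=\omega_n$ is admissible for $n\le K$. Put $\mathcal F=\bigcap_n\ell^{p_n}_{\nu_n}(\mathbb Z,\mathcal A)$ and $\mathcal B=\ell^{p_{K+1}}_{\nu_{K+1}}(\mathbb Z,\mathcal A)$.

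First, $\mathcal F$ is inverse-closed in $\mathcal B$. If $h\in\mathcal F$ is invertible in $\mathcal B$, then $\widehat h$ is invertible on $\Gamma(r,s)\supset\mathbb T$. Hence $h$ is invertible in every $\ell^{p_n}_{\nu_n}$, and these inverses coincide.

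Next, take any left inverse $g\in\mathcal B$ and a finitely supported $g'$ with $\|g-g'\|_{\mathcal B}\|f\|_{\mathcal B}<1$. Then $g'\star f\in\mathcal F$ and $\|\delta_0-g'\star f\|_{\mathcal B}<1$. So $g'\star f$ is invertible in $\mathcal B$, hence in $\mathcal F$, and $(g'\star f)^{-1}\star g'\in\mathcal F$ is a left inverse of $f$. For $K=\infty$, run the same argument with $\mathcal B=\ell^1(\mathbb Z,\mathcal A)$.

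With this step, the rest of your construction goes through, and it is cleaner than the paper's. Keeping $\nu_n=\omega_n$ on the admissible segment respects item (ii), which the paper's choice $\nu_n\equiv1$ can violate. Your worry about $\max(\omega_K,w)$ is also unfounded, since
\[
\max(a,b)(x+y)\le\max\bigl(a(x)a(y),\,b(x)b(y)\bigr)\le\max(a,b)(x)\,\max(a,b)(y),
\]
so the maximum of two weights is again a weight.
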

\begin{proof}
If $\omega_n$ is admissible for all $n\in\mathbb{N}$, then taking $\nu_n=\omega_n$ ($n\in\mathbb{N}$) suffices due to Theorem~\ref{thm1}. Suppose that not all $\omega_n$ are admissible. Let $k\in\mathbb{N}$ be the first positive integer such that $\omega_k$ is not admissible. Since $\omega_k$ is not admissible, by Theorem~\ref{thm1}, there is a weight $\nu$ on $\mathbb{Z}$ such that $f$ is left invertible in $\ell^{p_k}_{\nu}(\mathbb{Z},\mathcal{A})$. Note that this weight $\nu$ is not unique (cf. Theorem~\ref{thm:max weight discrete}). We therefore construct a weight $\nu_k$ in accordance with Theorem~\ref{thm1} that also fulfills the requisite conditions. Let $z\in\Gamma(\rho_{1,\omega_k},\rho_{2,\omega_k})$, see \eqref{def:rho for Z}. Then $|z|^n\leq\omega_k(n)$ for all $n\in\mathbb{Z}$. Since $f\in\ell^{p_k}_{\omega_k}(\mathbb{Z},\mathcal{A})$, we have $\sum_{n\in \mathbb{Z}}\|f(n)\|^{p_k}\omega_k(n)^{p_k}<\infty$. Using it along with \eqref{eq:ineqp<1} for $p_k\leq 1$, we get 
$$\sum_{n\in\mathbb{Z}}\|f(n)\||z|^n \leq \left( \sum_{n\in\mathbb{Z}} \|f(n)\|^{p_k}\omega_k(n)^{p_k} \right)^\frac{1}{{p_k}} < \infty,$$ 
that is, $\displaystyle \sum_{n\in\mathbb{Z}}f(n)z^n$ converges absolutely for all $z\in\Gamma(\rho_{1,\omega_k},\rho_{2,\omega_k})$.

Define $F:\Gamma(\rho_{1,\omega_k},\rho_{2,\omega_k}) \to \mathcal{A}$ by $$F(z)=\sum_{n \in \mathbb{Z}} f(n)z^n \quad (z \in \Gamma (\rho_{1,\omega_k},\rho_{2,\omega_k})).$$ Then $F$ is continuous and $F(z)=\widehat{f}(z)$ for all $z \in \mathbb T$. It means that $\widehat{f}$ can be extended continuously to $\Gamma (\rho_{1,\omega_k},\rho_{2,\omega_k})$. Take any $z_0 \in \mathbb{T}$. Since $\widehat{f}(z_0)$ is left invertible in $\mathcal{A}$ and the set $U_l$ of all left invertible elements of $\mathcal{A}$ is open, there is $\epsilon>0$ such that the open ball $B(\widehat{f}(z_0),\epsilon)$ is contained in $U_l$. Since $\widehat{f}$ is continuous, there exists $\delta_{z_0}>0$ such that $\widehat{f}(z)\in B(\widehat{f}(z_0),\epsilon)$ for all $z \in B(z_0,\delta_{z_0})\cap \Gamma(\rho_{1,\omega_k},\rho_{2,\omega_k})$, that is, $\widehat{f}(z)\in U_l$ for all $z \in B(z_0,\delta_{z_0})\cap \Gamma(\rho_{1,\omega_k},\rho_{2,\omega_k})$. By compactness of $\mathbb{T}$, we get positive real numbers $r_k,s_k$ such that $\rho_{1,\omega_k} \leq r_k \leq 1 \leq s_k \leq \rho_{2,\omega_k}$ and $\widehat{f}(z)\in U_l$ for all $z \in \Gamma(r_k,s_k)$. Since $\omega_k$ is not admissible, $\rho_{1,\omega_k}<1$ or $\rho_{2,\omega_k}>1$. If $\rho_{1,\omega_k}<1$, then choose $r_k<1$, and if $\rho_{2,\omega_k}>1$, then choose $s_k>1$. Define a weight $\nu_k$ on $\mathbb{Z}$ by 
\begin{align*}
    \nu_k(n)= 
    \begin{cases}
    r_k^n \quad \text{if} \ \ n\leq0, \\ 
    s_k^n \quad \text{if} \ \ n\geq0    
    \end{cases}.
\end{align*} 
Then $\nu_k\leq\omega_k$ and it is one of the weight from Theorem~\ref{thm1}. So, $f$ is left invertible in $\ell^{p_k}_{\nu_k}(\mathbb{Z},\mathcal{A})$. 

By Lemma~\ref{lem:weights on Z admissible} \eqref{lem:increasing weights on Z not admissible}, $\omega_n$ is not admissible for all $n\geq k$. And, by \eqref{eq:rho inequality for increasing weights on Z}, 
$$\rho_{1,\omega_{k+1}}\leq\rho_{1,\omega_k}\leq r_k \leq 1 \leq s_k \leq \rho_{2,\omega_k} \leq \rho_{2,\omega_{k+1}}.$$
So, we may choose $r_{k+1},s_{k+1}$ for $\omega_{k+1}$, as done for $\omega_k$, such that
\begin{align*}
    \rho_{1,\omega_{k+1}} \leq r_{k+1} \leq r_k \leq 1 \leq s_k \leq s_{k+1} \leq \rho_{2,\omega_{k+1}},
\end{align*} and define $\nu_{k+1}$ in the same manner as $\nu_k$. This along with $\nu_k(0)=1=\nu_{k+1}(0)$ gives $\nu_k\leq\nu_{k+1}$. Inductively, define $\nu_n$ for $n\geq k$, and it will follow that $\nu_n\leq\nu_{n+1}$ for all $n\geq k$. Now, if $k=1$, then we are done. If not, then take $\nu_n\equiv 1$ for all $1\leq n<k$. This guarantees that $\nu_{n}\leq\nu_{n+1}$ for $1\leq n<k$. 

Thus, we get a family of weights $\{\nu_n\}_{n\in\mathbb{N}}$ on $\mathbb{Z}$ that satisfies $\nu_n\leq\omega_n$ and $\nu_n\leq\nu_{n+1}$ for all $n\in \mathbb N$. Also, the weights are constructed in a such a way that Theorem~\ref{thm1} ensures that $f$ is left invertible in $\ell^{p_n}_{\nu_n}(\mathbb{Z},\mathcal{A})$ for all $n\in\mathbb{N}$ which in turn implies that $f$ is left invertible in $\bigcap_{n\in\mathbb{N}}\ell^{p_n}_{\nu_n}(\mathbb{Z},\mathcal{A})$, which is an algebra of Type-I. 

The particular case follows from Proposition~\ref{prop:inverse-closedness}.
\end{proof}

The following is our second main theorem and is for the inductive limit.

\begin{theorem} \label{thm:inductive limit d=1}
Let $\displaystyle \mathcal{X}=\bigcup_{n\in\mathbb{N}}\ell^{p_n}_{\omega_n}(\mathbb{Z},\mathcal{A})$ be an algebra of Type-II. Let $f\in\mathcal{X}$ be such that $\widehat{f}(z)$ is left invertible (respectively, right invertible, invertible) in $\mathcal{A}$ for all $z\in\mathbb{T}$. Assume further that if $f\in\ell^{p_j}_{\omega_j}(\mathbb{Z},\mathcal{A})$ for some $j\in\mathbb{N}$ such that $\omega_j$ is not admissible and $\omega_{j+1}$ is admissible, then the sequence $\{p_n\}_{n>j}$ is strictly increasing. Then there exists a family of weights $\{\nu_n\}_{n\in\mathbb{N}}$ on $\mathbb{Z}$ such that for all $n\in \mathbb{N}$,
\begin{enumerate}
    \item $\nu_n\leq\omega_n$, 
    \item $\nu_n$ is constant (nonconstant) if $\omega_n$ is constant (nonconstant), provided that all $\omega_n$ are admissible or none of them are.
    \item $\nu_n$ is admissible (nonadmissible) if $\omega_n$ is constant (nonadmissible),
    \item $\nu_{n+1} \leq \nu_n$, and 
    \item a left inverse (respectively, right inverse, inverse) of $f$ is in $\bigcup_{n\in\mathbb{N}}\ell^{p_n}_{\nu_n}(\mathbb{Z},\mathcal{A})$, which is an algebra of Type-II.
\end{enumerate}
In particular, $\mathcal{X}$ is inverse-closed if and only if $\{\omega_n\}_{n\in \mathbb{N}}$ satisfies the extended GRS condition.
\end{theorem}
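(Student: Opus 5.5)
We mirror the proof of Theorem~\ref{thm:proj limit d=1}, running the construction in the reverse direction. Since $f\in\mathcal{X}$, there is $j\in\mathbb{N}$ with $f\in\ell^{p_j}_{\omega_j}(\mathbb{Z},\mathcal{A})$. Because $\omega_n\geq\omega_{n+1}$ and $p_n\leq p_{n+1}$, Lemma~\ref{lem:weighted Fourier algebras inclusions} gives $f\in\ell^{p_n}_{\omega_n}(\mathbb{Z},\mathcal{A})$ for every $n\geq j$. It therefore suffices to produce a decreasing family $\{\nu_n\}$ with $\nu_n\leq\omega_n$ and a left inverse of $f$ in $\ell^{p_n}_{\nu_n}(\mathbb{Z},\mathcal{A})$ for a single index $n$; the union then contains it. To get the structural properties (2)--(3) for every $n$, we still construct all $\nu_n$.

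\textbf{Case 1: all $\omega_n$ are nonadmissible.} By \eqref{eq:rho inequality for decreasing weights on Z}, the annuli $\Gamma(\rho_{1,\omega_n},\rho_{2,\omega_n})$ shrink as $n$ grows, but each still contains $\mathbb{T}$. For $n\geq j$ we argue as in the proof of Theorem~\ref{thm:proj limit d=1}: $\widehat f$ extends continuously to $\Gamma(\rho_{1,\omega_n},\rho_{2,\omega_n})$, and by openness of $U_l$ together with compactness of $\mathbb{T}$ we find $r,s$ with $\widehat f(z)\in U_l$ on $\Gamma(r,s)$. We then choose $r_n,s_n$ inside $[\rho_{1,\omega_n},\rho_{2,\omega_n}]$, with $r_n<1$ whenever $\rho_{1,\omega_n}<1$ and $s_n>1$ whenever $\rho_{2,\omega_n}>1$. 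The choice is made monotone, $r_n\leq r_{n+1}$ and $s_{n+1}\leq s_n$, for instance by taking $r_n=\max(r,\rho_{1,\omega_n})$-type choices. This defines $\nu_n(m)=r_n^m$ for $m\leq 0$ and $\nu_n(m)=s_n^m$ for $m\geq0$. Then $\nu_n\leq\omega_n$, $\nu_{n+1}\leq\nu_n$, and Theorem~\ref{thm1} gives a left inverse of $f$ in $\ell^{p_n}_{\nu_n}$. For $n<j$ the same recipe works, since the annulus for $\omega_n$ contains the one for $\omega_j$ and we may take $r_n\leq r_j$ and $s_n\geq s_j$.

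\textbf{Case 2: all $\omega_n$ are admissible.} Take $\nu_n=\omega_n$ and use Theorem~\ref{thm1} together with Remark~\ref{rem:inverse closedness if omega is GRS}.

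\textbf{Mixed case.} By Lemma~\ref{lem:weights on Z admissible}, there is a threshold $k$ such that $\omega_n$ is nonadmissible for $n\leq k$ and admissible for $n>k$. On the nonadmissible block we build $\nu_n$ as in Case 1. On the admissible tail we set $\nu_n=\omega_n$ and need a left inverse there. If $j>k$, then $f\in\ell^{p_j}_{\omega_j}$ with $\omega_j$ admissible, so $\nu_j=\omega_j$ works.

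The main obstacle is $j\leq k$. Then $f$ may lie only in nonadmissible levels, and the weights $\nu_n$ for $n\leq k$ must dominate those for $n>k$ so that the family is decreasing, while the tail $\omega_n$ may not dominate an exponential weight. To handle this, we set $\nu_n=\omega_n$ for $n>k$ and use $\nu_n=\max(\text{exponential weight},\omega_{k+1})$-type modifications for $n\leq k$, checking submultiplicativity. The left inverse is still obtained in $\ell^{p_k}_{\nu_k}$ provided $\nu_k$ is controlled. This is where the strict-increase hypothesis on $\{p_n\}_{n>j}$ should enter: when no weight change is allowed, the strict increase of the exponents is what keeps the successive algebras distinct and makes the Type-II structure nondegenerate. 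This is also why property (2) is only claimed in the non-mixed cases.

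Finally, for the particular statement we use Theorem~\ref{thm:Fageot et. al. Type-II}, transferred to the $\mathcal{A}$-valued setting via Lemma~\ref{lem:l^p(X,C) subset l^p(X,A)}. Suppose the extended GRS condition holds. Then for each $m$ some $\omega_n$ has $\lim_k\omega_n(km)^{1/k}=1$, and in dimension one this forces some $\omega_n$ to be admissible, so $\rho$'s equal to $1$ give $\nu_n=\omega_n$ on the tail. The converse follows from the scalar case.
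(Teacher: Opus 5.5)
Your Cases 1 and 2 essentially coincide with the paper's argument. The mixed case, however, is not proved, and the obstacle you single out there does not exist.

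You noted at the outset that $f\in\ell^{p_n}_{\omega_n}(\mathbb{Z},\mathcal{A})$ for every $n\geq j$. So when $j\leq k$, the element $f$ also lies in the admissible level $\ell^{p_{k+1}}_{\omega_{k+1}}(\mathbb{Z},\mathcal{A})$, and Theorem~\ref{thm1} with Remark~\ref{rem:inverse closedness if omega is GRS} already puts a left inverse there. Your claim that ``$f$ may lie only in nonadmissible levels'' is therefore false, and $\nu_n=\omega_n$ for all $n$ satisfies (1)--(5) throughout the mixed case. What you propose instead is an unspecified ``$\max(\text{exponential weight},\omega_{k+1})$-type modification'', with invertibility in $\ell^{p_k}_{\nu_k}$ left as ``provided $\nu_k$ is controlled''. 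That is not an argument.

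Your subcase $j>k$ has a separate defect. Gluing Case-1 exponential weights for $n\leq k$ to $\nu_n=\omega_n$ for $n>k$ can violate $\nu_{k+1}\leq\nu_k$. For example, if $\rho_{2,\omega_k}=1$ (forcing $s_k=1$) and $\omega_{k+1}(m)=(1+|m|)^s$, then $\nu_k(m)=1<\omega_{k+1}(m)$ for $m>0$. The paper avoids this junction problem as follows:
\begin{itemize}
\item When the first level containing $f$ is admissible, it takes $\nu=\omega$ everywhere.
\item Otherwise it sets $\nu_l=\omega_l$ below the first level containing $f$, uses exponential weights from that level up to the last nonadmissible index, and puts $\nu_l\equiv1$ on the admissible tail.
\end{itemize}
The strict increase of $\{p_n\}_{n>j}$ serves only to keep this modified union from collapsing to a single space.

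Your justification of the ``in particular'' statement is also wrong. The extended GRS condition is an infimum over $n$ that need not be attained, so it does not force any $\omega_n$ to be admissible. The exponential weights $\omega_n(m)=e^{|m|/(n+1)}$ of Section~\ref{sec:application} satisfy it although every $\omega_n$ is nonadmissible, so there is no admissible tail on which to take $\nu_n=\omega_n$.

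In dimension one the condition says exactly that $\rho_{1,\omega_n}\to1$ and $\rho_{2,\omega_n}\to1$, and the ``if'' direction should be argued from that:
\begin{itemize}
\item From $f\in\ell^{p_j}_{\omega_j}(\mathbb{Z},\mathcal{A})$ you get an annulus $\Gamma(r,s)$ on which $\widehat f$ is left invertible, with $r<1$ unless $\rho_{1,\omega_j}=1$ and $s>1$ unless $\rho_{2,\omega_j}=1$.
\item Choose $n\geq j$ with $r\leq\rho_{1,\omega_n}$ and $\rho_{2,\omega_n}\leq s$.
\item The Gelfand-space argument underlying Theorem~\ref{thm1} then gives a left inverse of $f$ in $\ell^{p_n}_{\omega_n}(\mathbb{Z},\mathcal{A})$ itself, hence in $\mathcal{X}$.
\end{itemize}
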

\begin{proof}
Let $k$ be the smallest positive integer such that $f\in\ell^{p_k}_{\omega_k}(\mathbb{Z},\mathcal{A})$. For each $n\in\mathbb{N}$, we have $\omega_{k+n} \leq \omega_{k}$ and $p_k \leq p_{k+n}$. Thus, by Lemma~\ref{lem:weighted Fourier algebras inclusions}, $f\in\ell^{p_n}_{\omega_n}(\mathbb{Z},\mathcal{A})$ for all $n \geq k$. If $\omega_k$ is admissible, then, by Lemma~\ref{lem:weights on Z admissible} \eqref{lem:decreasing weights on Z admissible}, $\omega_j$ is admissible for all $j\geq k$. In this case, set $\nu_j=\omega_j$ for all $j\in\mathbb{N}$ and the theorem follows from Theorem~\ref{thm1}. 

If $\omega_k$ is not admissible, then as in Theorem~\ref{thm:proj limit d=1}, there are positive reals $r_k,s_k$ such that $\rho_{1,\omega_k} \leq r_k \leq 1 \leq s_k \leq \rho_{2,\omega_k}$ and $\widehat{f}(z)$ is left invertible for all $z\in\Gamma(r_k,s_k)$ with at least one of $r_k$ or $s_k$ not $1$. Again, define the weight $\nu_k$ on $\mathbb{Z}$ by 
\begin{align*}
    \nu_k(m)= 
    \begin{cases}
    r_k^m \quad \text{if} \ \ m \leq 0, \\ 
    s_k^m \quad \text{if} \ \ m \geq 0    
    \end{cases}.
\end{align*} 
Then $\nu_k\leq\omega_k$ and $f$ is left invertible in $\ell^{p_k}_{\nu_k}(\mathbb{Z},\mathcal{A})$. Suppose $\omega_{k+1}$ is also not admissible. Since $\omega_{k+1}\leq\omega_k$, using \eqref{eq:rho inequality for decreasing weights on Z}, we may choose $r_{k+1}$ and $s_{k+1}$, where at least one of them is not $1$, such that  
\begin{align*}
\rho_{1,\omega_k} \leq \rho_{1,\omega_{k+1}} \leq r_{k+1} \leq 1 \leq s_{k+1} \leq \rho_{2,\omega_{k+1}} \leq \rho_{2,\omega_{k}}, 
\end{align*}
along with 
\begin{align*}
    r_k \leq r_{k+1} \leq 1 \leq s_{k+1} \leq s_k,
\end{align*}
and construct $\nu_{k+1}$ accordingly. This implies that $\nu_{k+1}\leq\nu_k$. If all $\omega_j$ are not admissible for $j\geq k$, then construct $\nu_j$ inductively. Otherwise, let $j>k$ be the smallest integer such that $\omega_j$ is admissible and set $\nu_l\equiv1$ for all $l\geq j$. For $l<k$, set $\nu_l=\omega_l$. It is clear from the construction of $\nu_l$ $(l\geq k)$ that 
\begin{align*}
    \nu_l\leq\omega_l \quad \text{for all} \quad l\in\mathbb{N}, \quad \text{and} \quad \nu_{l+1} \leq \nu_l \quad \text{for all} \quad l\in\mathbb{N}\setminus\{k-1\}.
\end{align*}
Note that $\nu_k\leq\omega_k\leq\omega_{k-1}=\nu_{k-1}$. This completes the construction of the family of weights $\{\nu_n\}$. Consider the algebra $\displaystyle \mathcal{X}'=\bigcup_{n\in\mathbb{N}}\ell^{p_n}_{\nu_n}(\mathbb{Z},\mathcal{A})$. Then $\mathcal{X}'$ is an algebra of Type-II and $f$ is left invertible in $\mathcal{X}'$.

The particular case follows from Theorem~\ref{thm:Fageot et. al. Type-II} and Lemma~\ref{lem:l^p(X,C) subset l^p(X,A)}.
\end{proof}

\begin{remark} \label{rem:remarks for X' to be algebra of Type-II} Some remarks for $\mathcal{X}'$ in above Theorem \ref{thm:inductive limit d=1}.
\begin{enumerate}
    \item If we consider the algebra $\mathcal{X}'=\cup_{n=1}^k\ell^{p_n}_{\nu_n}(\mathbb{Z},\mathcal{A})$ in the last theorem, then too a left inverse of $f$ is in $\mathcal{X}'$ but then $\mathcal{X}'$ is nothing but $\ell^{p_k}_{\nu_k}(\mathbb{Z},\mathcal{A})$ which is not an algebra of Type-II.

    \item If $\{p_n\}_{n\in\mathbb{N}}$ is strictly increasing, then taking $\mathcal{X}'=\cup_{n\in\mathbb{N}}\ell^{p_n}_{\nu_n}(\mathbb{Z},\mathcal{A})$ with $\nu_n\equiv1$ for $n>k$ will also suffice, but it violates the condition (ii) and (iii).
    
    \item The further assumption is taken to make $\mathcal{X}'$ an algebra of Type-II as we need to set $\nu_l\equiv1$ for $l\geq j$ when there is some $j>k$ such that $\omega_j$ is admissible but $\omega_k$ is not. If it is dropped, then $\mathcal{X}'$ might become a finite union and hence a single space. It can be further substituted by taking $\{p_n\}_{n\geq j}$ such that it contains a strictly increasing subsequence, but we took $\{p_n\}_{n\geq j}$ to be strictly increasing for simplicity.
\end{enumerate}

\end{remark}     

\begin{remark} \label{rem:interesting case all weight not admissible}
In view of Lemma~\ref{lem:weights on Z admissible} \eqref{lem:decreasing weights on Z admissible}, for analysis on weights, it follows that the case of all $\omega_n$ not admissible is of more interest for algebra of Type-II and it is further justified by the construction of $\{\nu_n\}$ in the above theorem. Moreover, it is clear that $\{\omega_n\}_{n\in\mathbb{N}}$ satisfies the extended GRS condition if and only if $\{\nu_n\}_{n\in\mathbb{N}}$ satisfies the extended GRS condition.
\end{remark}

\subsection{Finite dimensional case} \label{subsec:d=2}
We shall only deal with the case of $d=2$ as the case of arbitrary $d\in\mathbb{N}\setminus\{1,2\}$ can be modified accordingly. It should be noted that it differs from the case of $d=1$ as we need to impose some additional conditions on the weights here.

\begin{theorem} \label{thm:proj limit d=2}
Let $\displaystyle \mathcal{X}=\bigcap_{n\in\mathbb{N}}\ell^{p_n}_{\omega_n}(\mathbb{Z}^2,\mathcal{A})$ be an algebra of Type-I. Suppose that, for each $n\in\mathbb{N}$, $\omega_n$ either satisfies the property $\ast_{\omega_n}$ or at least one of $\rho_{1,\omega_n},\rho_{2,\omega_n},\mu_{1,\omega_n}$ and $\mu_{2,\omega_n}$ is not $1$. If $f\in\mathcal{X}$ is such that $\widehat{f}(z)$ is left invertible (respectively, right invertible, invertible) in $\mathcal{A}$ for all $z\in\mathbb{T}^2$, then there exists a family of weights $\{\nu_n\}_{n\in\mathbb{N}}$ on $\mathbb{Z}^2$ such that 
\begin{enumerate}
    \item $\nu_n\leq\omega_n$ for all $n\in \mathbb{N}$,
    \item $\nu_n\leq\nu_{n+1}$ for all $n\in \mathbb{N}$, and 
    \item a left inverse (respectively, right inverse, inverse) of $f$ is in $\bigcap_{n\in\mathbb{N}}\ell^{p_n}_{\nu_n}(\mathbb{Z}^2,\mathcal{A})$, which is an algebra of Type-I.
\end{enumerate} 
\end{theorem}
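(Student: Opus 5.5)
The proof will follow the pattern of Theorem~\ref{thm:proj limit d=1}, with Theorem~\ref{2var} taking the place of Theorem~\ref{thm1} and Lemma~\ref{lem:weights on Z^2 admissible} taking the place of Lemma~\ref{lem:weights on Z admissible}. We treat left invertibility; the right and two-sided cases are identical.

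\emph{Step 1: the admissible indices.} Let $n$ be such that $\omega_n$ is admissible. By hypothesis it then satisfies property $\ast_{\omega_n}$, since all four of $\rho_{1,\omega_n},\rho_{2,\omega_n},\mu_{1,\omega_n},\mu_{2,\omega_n}$ equal $1$. Theorem~\ref{2var} therefore applies, and its proof gives that $f$ is left invertible in $\ell^{p_n}_{\omega_n}(\mathbb{Z}^2,\mathcal{A})$ itself, because the constructed weight coincides with $\omega_n$ in the admissible case. If every $\omega_n$ is admissible, we take $\nu_n=\omega_n$ and are done. Otherwise, let $k$ be the least index with $\omega_k$ nonadmissible. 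By Lemma~\ref{lem:weights on Z^2 admissible}, $\omega_n$ is nonadmissible for every $n\ge k$. For $n<k$ we set $\nu_n\equiv 1$; by Theorem~\ref{thm:Wiener-Domar-Zelazko}, Lemma~\ref{lem:l^p(X,C) subset l^p(X,A)} and the vector-valued version in \cite{kb2}, $f$ is left invertible in $\ell^{p_n}(\mathbb{Z}^2,\mathcal{A})$.

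\emph{Step 2: the nonadmissible indices, built inductively.} For $n\ge k$, every $z=(z_1,z_2)$ with $\rho_{1,\omega_n}^2\le|z_1|\le\rho_{2,\omega_n}^2$ and $\mu_{1,\omega_n}^2\le|z_2|\le\mu_{2,\omega_n}^2$ satisfies $|z^m|\le\omega_n(m)$. (This product of annuli is contained in the Gel'fand-type region used in \cite{kb2}, up to the square-root normalization in \eqref{def:rho for Z^2}.) On this region the series $\sum f(m)z^m$ converges absolutely and defines a continuous extension of $\widehat f$. Left invertible elements of $\mathcal A$ form an open set and $\mathbb{T}^2$ is compact, so we can find radii
\[
\rho_{1,\omega_n}^2\le r^{(1)}_n\le1\le s^{(1)}_n\le\rho_{2,\omega_n}^2,\qquad \mu_{1,\omega_n}^2\le r^{(2)}_n\le1\le s^{(2)}_n\le\mu_{2,\omega_n}^2
\]
such that $\widehat f(z)$ is left invertible on $\Gamma(r^{(1)}_n,s^{(1)}_n)\times\Gamma(r^{(2)}_n,s^{(2)}_n)$, with a radius taken strictly different from $1$ whenever the corresponding parameter differs from $1$.

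We then define $\nu_n(m_1,m_2)=h_1(m_1)h_2(m_2)$, where $h_i(m)=(r^{(i)}_n)^m$ for $m\le0$ and $h_i(m)=(s^{(i)}_n)^m$ for $m\ge0$. This is a weight with $\nu_n\le\omega_n$; the inequality follows from the definitions of $\rho,\mu$ by taking the sup/inf over both coordinates together. Its Gel'fand space is exactly the product of annuli above, so the Gel'fand theory argument of \cite{kb2} (the Bochner--Phillips type theorem on that polyannulus) gives that $f$ is left invertible in $\ell^{p_n}_{\nu_n}(\mathbb{Z}^2,\mathcal{A})$.

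Monotonicity comes from \eqref{eq:rho inequality for increasing weights on Z^2}: the admissible ranges for index $n+1$ contain those for index $n$. So once $r^{(i)}_n,s^{(i)}_n$ are fixed, we may keep them for index $n+1$ (or enlarge the annuli further inside the allowed range), ensuring $r^{(i)}_{n+1}\le r^{(i)}_n$ and $s^{(i)}_{n+1}\ge s^{(i)}_n$. This gives $\nu_n\le\nu_{n+1}$ for $n\ge k$. Since $\nu_n\equiv1\le\nu_k$ for $n<k$, the whole family is increasing.

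\emph{Step 3: conclusion.} The left inverse of $f$ in $\ell^1(\mathbb{Z}^2,\mathcal A)$ need not be unique, so we should not simply intersect the per-index inverses. Instead we fix one left inverse $g$ obtained in $\ell^{p_n}_{\nu_n}$ and argue that it lies in every $\ell^{p_m}_{\nu_m}$. The cleanest way is to note that each of these algebras embeds in $\ell^1(\mathbb{Z}^2,\mathcal A)$, and to take $g$ as the inverse of $\widehat f$ in the Banach algebra generated by $f$ in the smallest space needed. In the invertible case uniqueness settles this immediately. In the one-sided case we repeat the paper's convention from Theorem~\ref{thm:proj limit d=1}. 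This step, making a single inverse serve all indices, together with verifying that the product-annulus weight lies under $\omega_n$ given the square-root normalization in \eqref{def:rho for Z^2}, is where I expect the main care to be needed.
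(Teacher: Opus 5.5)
Your overall plan matches the paper's: $\nu_n\equiv1$ before the first nonadmissible index $k$, product weights $h_1(m_1)h_2(m_2)$ from radii chosen by compactness afterwards, and nested radii via \eqref{eq:rho inequality for increasing weights on Z^2}. The gap is the range of the radii in Step 2. You allow $\rho_{1,\omega_n}^2\le|z_1|\le\rho_{2,\omega_n}^2$ and $\mu_{1,\omega_n}^2\le|z_2|\le\mu_{2,\omega_n}^2$. You then assert $|z_1|^{m_1}|z_2|^{m_2}\le\omega_n(m)$, and hence $\nu_n\le\omega_n$, ``by taking the sup/inf over both coordinates together''. But \eqref{def:rho for Z^2} only controls one coordinate at a time. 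On your region you get $|z_1|^{m_1}\le\omega_n(m)$ and $|z_2|^{m_2}\le\omega_n(m)$ separately, and multiplying gives $\omega_n(m)^2$, not $\omega_n(m)$.

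This really fails. Take $\omega(m_1,m_2)=2^{\max(|m_1|,|m_2|)}$, a nonadmissible weight with $\rho_{1,\omega}=\mu_{1,\omega}=1/\sqrt2$ and $\rho_{2,\omega}=\mu_{2,\omega}=\sqrt2$. Your range allows $s^{(1)}_n=s^{(2)}_n=s\in(\sqrt2,2]$. Then $\nu_n(j,j)=s^{2j}$ while $\omega(j,j)=2^j$, so $\nu_n\le\omega$ fails even up to a constant. Likewise $f(j,j)=2^{-j}j^{-2/p}1_{\mathcal A}$ lies in $\ell^p_\omega(\mathbb{Z}^2,\mathcal{A})$, but $\sum_m\|f(m)\|\,|z^m|$ diverges at $z=(2,2)$. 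Your parenthetical also has the containment backwards. Since $\rho_{1,\omega_n}\le1\le\rho_{2,\omega_n}$, your squared polyannulus \emph{contains} $\Gamma(\rho_{1,\omega_n},\rho_{2,\omega_n})\times\Gamma(\mu_{1,\omega_n},\mu_{2,\omega_n})$ rather than being contained in it.

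The paper avoids this by working on the unsquared polyannulus. For $\rho_{1,\omega_k}\le|z|\le\rho_{2,\omega_k}$ and $\mu_{1,\omega_k}\le|\eta|\le\mu_{2,\omega_k}$, the square-root normalization gives $|z|^{2m_1}\le\omega_k(m)$ and $|\eta|^{2m_2}\le\omega_k(m)$ for all $m\in\mathbb{Z}^2$. Multiplying and taking square roots yields $|z|^{m_1}|\eta|^{m_2}\le\omega_k(m)$. The radii are then chosen inside these unsquared ranges, which gives both the extension of $\widehat f$ and $\nu_k\le\omega_k$. Your compactness step only needs radii near $1$, so replacing $\rho^2,\mu^2$ by $\rho,\mu$ repairs Step 2 with no other change. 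Your description of the Gel'fand space of the product weight remains correct.

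On Step 3, the paper also just asserts that left invertibility in each $\ell^{p_n}_{\nu_n}$ passes to the intersection, so you are no worse off. However, your ``smallest space'' remedy is not meaningful, because these spaces decrease and there is no smallest one. If you want to close the point, let $L_n=\{g\in\ell^{p_n}_{\nu_n}(\mathbb{Z}^2,\mathcal{A}):g\star f=\delta_0\}$, and take $h\in L_{n+1}$, $g\in L_n$ and $a$ finitely supported. Then $b=a+(\delta_0-a\star f)\star h$ lies in $L_{n+1}$ and $b-g=(a-g)\star(\delta_0-f\star h)$. So $L_{n+1}$ is dense in $L_n$, and a Mittag-Leffler (successive approximation) argument yields a single $g\in\bigcap_n L_n$.
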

\begin{proof}
Let $k\in\mathbb{N}$ be the smallest positive integer such that at least one of $\rho_{1,\omega_k},\rho_{2,\omega_k},\mu_{1,\omega_k}$ and $\mu_{2,\omega_k}$ is not $1$. Set $\nu_n\equiv 1$ for all $1\leq n < k$.
Now, if $(z,\eta)\in \Gamma_{\omega_k}=\Gamma(\rho_{1,\omega_k},\rho_{2,\omega_k}) \times \Gamma(\mu_{1,\omega_k},\mu_{2,\omega_k})$, then $\rho_{1,\omega_k}\leq|z|\leq\rho_{2,\omega_k}$ and $\mu_{1,\omega_k}\leq |\eta| \leq \mu_{2,\omega_k}$ and it implies that $|z|^{2m_1}\leq \omega_k(m_1,m_2)$ and $|\eta|^{2m_2}\leq \omega_k(m_1,m_2)$ for all $(m_1,m_2)\in\mathbb{Z}^2$. So, $|z|^{m_1}|\eta|^{m_2}\leq\omega_k(m_1,m_2)$ for all $(m_1,m_2)\in\mathbb{Z}^2$. Since $f\in\ell^{p_k}_{\omega_k}(\mathbb{Z}^2,\mathcal{A})$ and $p_k<1$, it follows using \eqref{eq:ineqp<1} that 
\begin{align*} 
\sum_{(m_1,m_2)\in\mathbb{Z}^2}\|f(m_1,m_2)\| |z|^{m_1}|\eta|^{m_2} 
&\leq \sum_{(m_1,m_2)\in F}\|f(m_1,m_2)\| \omega_k(m_1,m_2) \\
&\leq \left( \sum_{(m_1,m_2)\in\mathbb{Z}^2}\|f(m_1,m_2)\|^{p_k}\omega_k(m_1,m_2)^{p_k} \right)^\frac{1}{p_k} <\infty. 
\end{align*} 
It means $\sum_{(m_1,m_2)\in\mathbb{Z}^2}\|f(m_1,m_2)\| |z|^{m_1}|\eta|^{m_2}$ converges for all $(z,\eta)\in \Gamma_{\omega_k}$ and so $\widehat f$ can be extended to $\Gamma_{\omega_k}$. Since $\widehat f$ is continuous and $\widehat f(z,\eta)$ is left invertible for all $(z,\eta)\in\mathbb{T}^2$, there are positive reals $r_{k_1}, r_{k_2}, s_{k_1}, s_{k_2}$ such that
\begin{align*}
    \rho_{1,\omega_k}\leq r_{k_1} \leq 1 \leq r_{k_2} \leq \rho_{2,\omega_k}, \quad \mu_{1,\omega_k}\leq s_{k_1} \leq 1 \leq s_{k_2} \leq \mu_{2,\omega_k},
\end{align*}
and $\widehat f(z,\eta)$ is left invertible for all $(z,\eta)\in \Gamma(r_{k_1},r_{k_2}) \times \Gamma(s_{k_1},s_{k_2})$. Notice that if $\rho_{1,\omega_k}<1$, then we get $\rho_{1,\omega_k}\leq r_{k_1} <1$, and if $\rho_{2,\omega_k}>1$, then we get $1<r_{k_2}\leq\rho_{2,\omega_k}$. The same goes for $s_{k_1}$ and $s_{k_2}$. Define $\nu_k:\mathbb{Z}^2\to[1,\infty)$ by
\begin{align*}
\nu_k(m_1,m_2)=\begin{cases} 
r_{k_1}^m s_{k_1}^n & \text{if} \ \  m\leq0, n\leq0 \\ 
r_{k_1}^m s_{k_2}^n & \text{if} \ \ m\leq0, n>0 \\ 
r_{k_2}^m s_{k_1}^n & \text{if} \ \ m>0, n\leq0 \\ 
r_{k_2}^m s_{k_2}^n & \text{if} \ \ m>0, n>0 \end{cases}.    
\end{align*}
Now, $\omega_k\leq\omega_{k+1}$ implies there are positive reals $r_{(k+1)_1}, r_{(k+1)_2}, s_{(k+1)_1}, s_{(k+1)_2}$ similarly as above along with
\begin{align*}
    &\rho_{1,\omega_{k+1}} \leq r_{(k+1)_1} \leq r_{k_1} \leq 1 \leq r_{k_2} \leq r_{(k+1)_2} \leq \rho_{2,\omega_{k+1}} \\
    \text{and} \quad &\mu_{1,\omega_{k+1}}\leq  s_{(k+1)_1} \leq s_{k_1} \leq 1 \leq s_{k_2} \leq s_{(k+1)_2} \leq \mu_{2,\omega_{k+1}}.
\end{align*}
And construct $\nu_{k+1}$ as $\nu_k$ using $r_{(k+1)_1}, r_{(k+1)_2}, s_{(k+1)_1}, s_{(k+1)_2}$. Rest of the argument is similar to the one given in Theorem~\ref{thm:inductive limit d=1}.  This concludes the proof.
\end{proof}

\begin{theorem} \label{thm:inductive limit d=2}
Let $\displaystyle \mathcal{X}=\bigcup_{n\in\mathbb{N}}\ell^{p_n}_{\omega_n}(\mathbb{Z}^2,\mathcal{A})$ be an algebra of Type-II. Suppose that, for each $n\in\mathbb{N}$, $\omega_n$ either satisfies property $\ast_{\omega_n}$ or at least one of $\rho_{1,\omega_n},\rho_{2,\omega_n},\mu_{1,\omega_n}$ and $\mu_{2,\omega_n}$ is not $1$. Let $f\in\mathcal{X}$ be such that $\widehat{f}(z)$ is left invertible (respectively, right invertible, invertible) in $\mathcal{A}$ for all $z\in\mathbb{T}^2$. Assume further that if $f\in\ell^{p_j}_{\omega_j}(\mathbb{Z},\mathcal{A})$ for some $j\in\mathbb{N}$ such that $\omega_j$ is not admissible and $\omega_{j+1}$ is admissible, then the sequence $\{p_n\}_{n>j}$ is strictly increasing. Then there exists a family of weights $\{\nu_n\}_{n\in\mathbb{N}}$ on $\mathbb{Z}^2$ such that 
\begin{enumerate}
    \item $\nu_n\leq\omega_n$ for all $n\in \mathbb{N}$, 
    \item $\nu_{n+1} \leq \nu_n$ for all $n\in \mathbb{N}$ and 
    \item a left inverse (respectively, right inverse, inverse) of $f$ is in $\bigcup_{n\in\mathbb{N}}\ell^{p_n}_{\nu_n}(\mathbb{Z}^2,\mathcal{A})$, which is an algebra of Type-II.
\end{enumerate}
\end{theorem}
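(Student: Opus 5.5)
The plan is to follow the proof of Theorem~\ref{thm:inductive limit d=1}, using the two-variable Gel'fand space machinery from the proof of Theorem~\ref{thm:proj limit d=2}. First let $k$ be the smallest index with $f\in\ell^{p_k}_{\omega_k}(\mathbb{Z}^2,\mathcal{A})$. Since $\omega_{n+1}\le\omega_n$ and $p_n\le p_{n+1}$, Lemma~\ref{lem:weighted Fourier algebras inclusions} gives $f\in\ell^{p_n}_{\omega_n}(\mathbb{Z}^2,\mathcal{A})$ for all $n\ge k$. For $l<k$ set $\nu_l=\omega_l$; then $\nu_k\le\omega_k\le\omega_{k-1}=\nu_{k-1}$ holds automatically once $\nu_k\le\omega_k$ is arranged.

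If $\omega_k$ is admissible, i.e.\ all four of $\rho_{1,\omega_k},\rho_{2,\omega_k},\mu_{1,\omega_k},\mu_{2,\omega_k}$ equal $1$, then by hypothesis $\omega_k$ has property $\ast_{\omega_k}$. Lemma~\ref{lem:weights on Z^2 admissible}(3) shows that every $\omega_n$ with $n\ge k$ is admissible, hence also has property $\ast$. Theorem~\ref{2var} then gives a left inverse of $f$ in $\ell^{p_n}_{\nu}$ for some weight $\nu\le\omega_n$. In this case I would try to take $\nu_n=\omega_n$, appealing to Remark~\ref{rem:inverse closedness if omega is GRS} when $\omega_n$ satisfies GRS. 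If it does not, I would instead take $\nu_n\equiv1$ for all $n\ge k$. This choice is monotone, satisfies $\nu_n\le\omega_n$, and gives a left inverse in $\ell^{p_k}(\mathbb{Z}^2,\mathcal{A})$ by the vector-valued Wiener theorem (Theorem~\ref{2var} with trivial weight). The union stays a Type-II algebra because the $\nu_l$ for $l<k$ differ.

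If $\omega_k$ is not admissible, repeat the extension argument of Theorem~\ref{thm:proj limit d=2}. For $(z,\eta)\in\Gamma(\rho_{1,\omega_k},\rho_{2,\omega_k})\times\Gamma(\mu_{1,\omega_k},\mu_{2,\omega_k})$ we have $|z|^{m_1}|\eta|^{m_2}\le\omega_k(m_1,m_2)$, so $\widehat f$ extends continuously to this product of annuli. Openness of the left invertibles and compactness of $\mathbb{T}^2$ then give radii $r_{k_1}\le1\le r_{k_2}$ and $s_{k_1}\le1\le s_{k_2}$ inside the $\rho,\mu$ bounds, with strict inequality wherever the corresponding bound is not $1$, such that $\widehat f$ is left invertible on the product of annuli. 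Define $\nu_k$ by the four-case exponential formula. Inductively, if $\omega_{k+1}$ is also non-admissible, the inequality \eqref{eq:rho inequality for decreasing weights on Z^2} shows that its $\rho,\mu$ intervals are nested inside those of $\omega_k$. I would choose new radii within both the new intervals and the previous radii, namely $r_{k_1}\le r_{(k+1)_1}\le1\le r_{(k+1)_2}\le r_{k_2}$ and similarly for $s$. Shrinking the annuli preserves left invertibility, so $\nu_{k+1}\le\nu_k$ and $\nu_{k+1}\le\omega_{k+1}$; the latter holds since $|z|^{m_1}|\eta|^{m_2}\le\omega_{k+1}$ on the new annuli and $\nu$ is attained at boundary points. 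If some first $j>k$ has $\omega_j$ admissible, set $\nu_l\equiv1$ for $l\ge j$. The extra hypothesis that $\{p_n\}_{n>j}$ is strictly increasing keeps the resulting union a genuine Type-II algebra rather than a finite union.

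Finally, for every $n\ge k$ the weight $\nu_n$ is of the kind produced in the proof of Theorem~\ref{2var}, so $f$ is left invertible in $\ell^{p_n}_{\nu_n}(\mathbb{Z}^2,\mathcal{A})$ and hence in the union. The right-invertible and invertible cases are identical. The main obstacle is the step certifying that $f$ has a left inverse in $\ell^{p_n}_{\nu_n}$ for the exponential weight $\nu_n$. This needs the identification of the Gel'fand space of $\ell^{p}_{\nu_n}(\mathbb{Z}^2)$ with $\Gamma(r_{n_1},r_{n_2})\times\Gamma(s_{n_1},s_{n_2})$ together with the Bochner--Phillips-type left-invertibility result. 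I would take this directly from the construction inside the proof of Theorem~\ref{2var}, as the paper does in dimension one.
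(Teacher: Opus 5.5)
Your proposal is correct and follows essentially the paper's route: the paper's proof just says to run the inductive-limit argument of Theorem~\ref{thm:inductive limit d=1} with the product-of-annuli weights built in Theorem~\ref{thm:proj limit d=2}, which is what you spell out, and your trivial-weight fallback in the admissible non-GRS case mirrors the choice $\nu_n\equiv 1$ that Theorem~\ref{thm:proj limit d=2} makes for its admissible weights. One small correction: in that fallback the union is not kept non-degenerate ``because the $\nu_l$ for $l<k$ differ'', since every $\ell^{p_l}_{\omega_l}(\mathbb{Z}^2,\mathcal{A})$ with $l<k$ already lies inside $\ell^{p_k}(\mathbb{Z}^2,\mathcal{A})$; the union is nonetheless still of Type-II in the sense of Definition~\ref{def:Type-I and Type-II}, which allows non-strict monotonicity.
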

\begin{proof}
The proof follows using techniques of Theorem~\ref{thm:inductive limit d=1} along with the weights constructed as in Theorem~\ref{thm:proj limit d=2}.
\end{proof}

\subsection{Infinite dimensional case} \label{subsec:d=infty}
Lastly, in this subsection we provide the theorems for $\mathbb{Z}^\mathbb{N}$. Again, we will just provide the construction of weights $\nu_n$ for non admissible weights $\omega_n$ as rest of the details can be verified as per previous cases.

\begin{theorem} \label{thm:proj limit d=infty}
Let $\mathcal{X}=\bigcap_{n\in\mathbb{N}}\ell^{p_n}_{\omega_n}(\mathbb{Z}^\mathbb{N},\omega_n,\mathcal{A})$ be an algebra of Type-I. If $f\in\mathcal{X}$ is such that $\widehat{f}(z)$ is left invertible (respectively, right invertible, invertible) in $\mathcal{A}$ for all $z\in\mathbb{T}^\infty$, then, for each $n\in\mathbb{N}$, there there exists a family of weights $\{\nu_n\}_{n\in\mathbb{N}}$ on $\mathbb{Z}^\mathbb{N}$ such that 
\begin{enumerate}
    \item $\nu_n\leq\omega_n$ for all $n\in \mathbb{N}$,
    \item $\nu_n\leq\nu_{n+1}$ for all $n\in \mathbb{N}$, and 
    \item a left inverse (respectively, right inverse, inverse) of $f$ is in $\bigcap_{n\in\mathbb{N}}\ell^{p_n}_{\nu_n}(\mathbb{Z}^\mathbb{N},\mathcal{A})$, which is an algebra of Type-I.
\end{enumerate} 
Suppose that $n\in\mathbb{N}$ is the first integer such that $\omega_n$ is not an admissible weight. If for some $m\in\mathbb{N}$, $\omega_n$ is $m$-nonadmissible, then given any $l\in\{1,2,\dots,m\}$, the family $\{\nu_n\}$ can be constructed so that $\nu_j$ ($j\geq n$) are of order $l$; and if $\omega_n$ is $\infty$-nonadmissible, then given any $l\in\mathbb{N}$ the family $\{\nu_n\}$ can be constructed so that $\nu_j$ ($j\geq n$) are of order $l$.
\end{theorem}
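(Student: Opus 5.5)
The plan is to mirror the proof of Theorem~\ref{thm:proj limit d=1}, replacing the annulus by a product of annuli indexed by the coordinates in $S_{\omega}$ and using Theorem~\ref{thi} as the engine for the left invertibility in each layer.

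\textbf{Admissible layers.} If every $\omega_n$ is admissible, set $\nu_n=\omega_n$. By Remark~\ref{rem:inverse closedness if omega is GRS} and Theorem~\ref{thi}, $f$ is left invertible in each $\ell^{p_n}_{\omega_n}(\mathbb{Z}^\mathbb{N},\mathcal{A})$, hence in the intersection. Otherwise, let $n$ be the first index with $\omega_n$ nonadmissible. Put $\nu_j\equiv1$ for $j<n$, which is monotone and dominated by $\omega_j$. By Lemma~\ref{lem:weights on Z^N admissible}, $S_{\omega_n}\subset S_{\omega_j}$ for all $j\geq n$.

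\textbf{The first nonadmissible layer.} Fix $l$ as in the statement and choose indices $i_1<\dots<i_l$ in $S_{\omega_n}$. For $z\in\mathbb{T}^\infty$ with $\rho_{i,\omega_n}\le |z_i|\le\mu_{i,\omega_n}$ for $i\in\{i_1,\dots,i_l\}$ and $|z_i|=1$ otherwise, one checks from \eqref{def:rho for Z^infty} that $|z^\alpha|\le\omega_n(\alpha)$, up to the same normalisation as in the two-variable case.

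\begin{itemize}
\item By \eqref{eq:ineqp<1} with $p_n\le1$, the series $\sum_\alpha f(\alpha)z^\alpha$ converges absolutely on this set, so $\widehat f$ extends continuously to it.
\item Left invertibility is open in $\mathcal{A}$ and holds on $\mathbb{T}^\infty$. Using compactness of $\mathbb{T}^\infty$ (Tychonoff) and the fact that only the finitely many coordinates $i_1,\dots,i_l$ are perturbed, we obtain $r_{n,j}\le1\le s_{n,j}$ for $j=1,\dots,l$.
\item These satisfy $\rho_{i_j,\omega_n}\le r_{n,j}$ and $s_{n,j}\le\mu_{i_j,\omega_n}$, with at least one of $r_{n,j},s_{n,j}$ different from $1$ for each $j$. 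Moreover, $\widehat f(z)$ is left invertible whenever $r_{n,j}\le|z_{i_j}|\le s_{n,j}$ and the other coordinates are unimodular.
\item Define $\nu_n$ as the weight of order $l$ built from $(r_{n,j},s_{n,j})$ at positions $i_j$. Then $\nu_n\le\omega_n$, and by the construction underlying Theorem~\ref{thi}, $f$ is left invertible in $\ell^{p_n}_{\nu_n}(\mathbb{Z}^\mathbb{N},\mathcal{A})$.
\end{itemize}

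\textbf{Induction.} For $j>n$, \eqref{eq:rho inequality for increasing weights on Z^N} gives $\rho_{i,\omega_{j+1}}\le\rho_{i,\omega_j}$ and $\mu_{i,\omega_j}\le\mu_{i,\omega_{j+1}}$. The region for $\omega_{j+1}$ therefore contains that for $\omega_j$, so we may take $r_{j+1,t}\le r_{j,t}$ and $s_{j,t}\le s_{j+1,t}$, keeping the same coordinates $i_t$. For instance, we can simply reuse the previous values, since left invertibility on the polyannulus is already guaranteed. This yields $\nu_j\le\nu_{j+1}\le\omega_{j+1}$, and $\nu_n\le\nu_{n+1}$ holds likewise. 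Comparing with the constant weights, $\nu_{n-1}\equiv1\le\nu_n$ because each $\nu_n\ge1$.

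\textbf{Conclusion and main obstacle.} Left invertibility in every layer gives a left inverse in each $\ell^{p_j}_{\nu_j}(\mathbb{Z}^\mathbb{N},\mathcal{A})$. The obstacle is that left inverses need not be unique in the noncommutative case, so a single element of the intersection is needed. I would handle it as follows.

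\begin{itemize}
\item The layers are nested: $\ell^{p_{j+1}}_{\nu_{j+1}}\subset\ell^{p_j}_{\nu_j}$ by Lemma~\ref{lem:weighted Fourier algebras inclusions}.
\item In the right- or two-sided-invertible case, the inverse is unique in $\ell^1$, so the same element lies in every layer.
\item In the genuinely one-sided case, I would argue as the paper does via Theorem~\ref{thi}: a left inverse is produced in the largest weighted algebra whose Gel'fand-type region (the polyannulus) lies in the invertibility domain of $\widehat f$. This means constructing it once, on the largest polyannulus used, which lies in the smallest algebra. Concretely, I would cap the radii at the first chosen values ($r_{j,t}=r_{n,t}$, $s_{j,t}=s_{n,t}$ for all $j\ge n$). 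All $\nu_j$ with $j\ge n$ then coincide, and a left inverse in $\ell^{p_\infty}_{\nu_n}$, where $p_\infty=\lim p_j>0$, lies in every layer.
\end{itemize}
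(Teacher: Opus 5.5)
Your construction of the $\nu_j$ is the paper's construction:
\begin{itemize}
\item the first nonadmissible index $n$, with $\nu_j\equiv1$ for $j<n$;
\item coordinates $i_1<\dots<i_l$ in $S_{\omega_n}$, with the $l$-th-root normalisation;
\item extension of $\widehat f$ via \eqref{eq:ineqp<1}, then openness of the left invertibles plus compactness;
\item order-$l$ weights with monotone radii.
\end{itemize}
Freezing the radii is allowed, since Definition~\ref{def:Type-I and Type-II} asks only for non-strict monotonicity. You are also right that the paper's last step (left invertible in every layer, hence in the intersection) is only asserted, and that it needs an argument for one-sided inverses.

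The gap is in your repair. To get a left inverse in $\ell^{p_\infty}_{\nu_n}(\mathbb{Z}^\mathbb{N},\mathcal{A})$ from Theorem~\ref{thi}, you need $f\in\ell^{p_\infty}_{\nu_n}(\mathbb{Z}^\mathbb{N},\mathcal{A})$. The hypothesis only gives $f\in\ell^{p_j}_{\nu_n}$ for each $j$, and when $p_j$ decreases strictly to $p_\infty$ this is strictly weaker. For a concrete case, take:
\begin{itemize}
\item $p_j\downarrow\frac12$ strictly and $\omega_j(\alpha)=e^{|\alpha_1|}$ for all $j$;
\item $f=\delta_0+h$, where $h(0,k,0,\dots)=c\,k^{-2}1_\mathcal{A}$ for $k\in\mathbb{N}$, $h=0$ elsewhere, and $0<c<6/\pi^2$.
\end{itemize}
Then $f\in\mathcal{X}$ and $\widehat f$ is invertible on $\mathbb{T}^\infty$, yet $f$ lies in no $\ell^{1/2}_\nu(\mathbb{Z}^\mathbb{N},\mathcal{A})$ with $\nu\geq1$. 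Also, $p_\infty>0$ is not part of Definition~\ref{def:Type-I and Type-II}. So your argument covers only the case where $\{p_j\}$ is eventually constant, where you use the terminal exponent. In general you still need a way to produce a single left inverse lying in all $\ell^{p_j}_{\nu_n}$ at once. That is, you need a one-sided Wiener theorem for the Fr\'echet algebra $\bigcap_j\ell^{p_j}_{\nu_n}(\mathbb{Z}^\mathbb{N},\mathcal{A})$ itself, not layer by layer.

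Two smaller points. First, right inverses are no more unique than left inverses when $\mathcal{A}$ is noncommutative. Only the two-sided case is settled by uniqueness; the right-invertible case has exactly the same obstacle as the left one.

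Second, in the all-admissible case your appeal to Remark~\ref{rem:inverse closedness if omega is GRS} does not apply; this flaw is inherited from the paper. On $\mathbb{Z}^\mathbb{N}$, the condition $S_\omega=\emptyset$ is weaker than GRS (Remark~\ref{rem:particular not true for higher dimensions}). For $\omega(\alpha)=e^{|\alpha_1+\alpha_2|}$ one has $S_\omega=\emptyset$. Let $g(\mathbf0)=1$, $g(1,1,0,\dots)=-\tfrac12$ and $g=0$ elsewhere. Then $\widehat g(z)=1-\tfrac12z_1z_2\neq0$ on $\mathbb{T}^\infty$. Yet $g$ is not invertible in $\ell^1_\omega(\mathbb{Z}^\mathbb{N})$, because the bounded character $\alpha\mapsto2^{(\alpha_1+\alpha_2)/2}$ annihilates it. So $\nu_n=\omega_n$ is not a valid choice there, and you should take $\nu_n\equiv1$ instead.
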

\begin{proof}
If $\omega_n$ is such that $\mu_{i,\omega_n}=1=\rho_{i,\omega_n}$ for all $n,i\in\mathbb{N}$, then just take $\nu_n=\omega_n$. Let $k\in\mathbb{N}$ be the first integer such that $\omega_k$ is $m$-nonadmissible for some $m\in\mathbb{N}$. Then $S_{\omega_k}=\{i_1,i_2,\dots,i_m\}$, where $i_1<i_2<\dots<i_m$ are positive integers, and at least one of $\mu_{i_t,\omega_k}$ and $\rho_{i_t,\omega_k}$ is not $1$ for all $1\leq t \leq m$. Fix $l\in\{1,2,\dots,m\}$. 
From \eqref{def:rho for Z^infty}, we have
\begin{align*}
    &\mu_{i,\omega_k}^\frac{1}{l}=\left(\sup\{\omega(\alpha_1,\alpha_2,\dots,\alpha_{i-1},-\alpha_i,\alpha_{i+1},\dots)^{-\frac{1}{\alpha_i}}:\alpha_i\in\mathbb{N},\alpha_j\in\mathbb{Z}\ (j\neq i)\}\right)^\frac{1}{l}, \\
    \text{and} \quad &\rho_{i,\omega_k}^\frac{1}{l}=\left(\inf\{\omega(\alpha_1,\alpha_2,\dots,\alpha_{i-1},\alpha_i,\alpha_{i+1},\dots)^{\frac{1}{\alpha_i}}:\alpha_i\in\mathbb{N},\alpha_j\in\mathbb{Z}\ (j\neq i)\}\right)^\frac{1}{l}.
\end{align*}
Let $z\in D=\Pi_{n\in\mathbb{N}} A_n$, where $A_{i_t}=\Gamma(\mu_{i_t}^\frac{1}{l},\rho_{i_t}^\frac{1}{l})$ and $A_n=\mathbb{T} \ (n\neq i_t)$ for $1\leq t\leq l$. Then $|z_{i_t}|^{l\alpha_{i_t}}\leq\omega(\alpha)$ for $1\leq t \leq l$ and so $|z_{i_1}|^{\alpha_{i_1}}|z_{i_2}|^{\alpha_{i_2}}\cdots|z_{i_l}|^{\alpha_{i_l}} \leq \omega_k(\alpha)$ for all $\alpha=(\alpha_1,\alpha_2,\alpha_3,\dots)\in\mathbb{Z}^\mathbb{N}$. 
As $f\in \ell^{p_k}_{\omega_k}(\mathbb{Z}^\mathbb{N},\mathcal{A})$, $\sum_{\alpha\in\mathbb{Z}^\mathbb{N}} \|f(\alpha)\|^{p_k}\omega_k(\alpha)^{p_k} <\infty$. This implies that 
\begin{align*} 
\sum_{\alpha\in\mathbb{Z}^\mathbb{N}} \|f(\alpha)\||z^\alpha| 
&\leq \sum_{\alpha\in\mathbb{Z}^\mathbb{N}} \|f(\alpha)\||z_{i_1}|^{\alpha_{i_1}}|z_{i_2}|^{\alpha_{i_2}}\cdots|z_{i_l}|^{\alpha_{i_l}} \\ 
& \leq \left( \sum_{\alpha\in\mathbb{Z}^\mathbb{N}} \|f(\alpha)\|^{p_k}\omega_k(\alpha)^{p_k} \right)^\frac{1}{p_k} <\infty. \end{align*}So, $\sum_{\alpha\in\mathbb{Z}^\mathbb{N}} f(\alpha)z^\alpha$ converges for all $z\in D$. 
Since $\widehat f$ is continuous, $\widehat f(z)$ is left invertible for all $z\in\mathbb{T}^\infty$ and set of left invertible elements is open, there are $r_{t,k},s_{t,k}$ $(1\leq t\leq l)$ such that $\mu_{i_t,\omega_k}\leq r_{t,k} \leq 1 \leq s_{t,k} \leq \rho_{i_t,\omega_k}$ and $\widehat f(z)$ is left invertible for all $z\in \Gamma=\Pi_{n\in\mathbb{N}} A_n$, where $A_{i_t}=\Gamma(r_{t,k},s_{t,k})$ and $A_n=\mathbb{T}$ if $n\neq i_t$ for $1\leq t\leq l$. If $\mu_{i_t,\omega_k}<1$, then $r_{t,k}<1$, and if $\rho_{i_t,\omega_k}>1$, then $s_{t,k}>1$. Define the weight $\nu_k$ on $\mathbb{Z}^\mathbb{N}$ as follows
\begin{align*}
    \nu_k(\alpha)= \prod_{t=1}^l h_{t,k}^{\alpha_{i_t}}, \quad  \text{where} \quad 
    h_{t,k}=\begin{cases}
    r_{t,k}, \ \text{if} \ \alpha_{i_t}\leq 0 \\ 
    s_{t,k}, \ \text{if} \ \alpha_{i_t}\geq 0    
    \end{cases}
    \text{and} \,\, \alpha_j\in\mathbb{Z} \ \text{if} \,\, j\notin\{i_1,i_2,\dots,i_l\}.
\end{align*}
Then $\nu_k$ is a weight of order $l$ on $\mathbb{Z}^\mathbb{N}$. For $1\leq t\leq l$, using \eqref{eq:rho inequality for increasing weights on Z^N}, we get the positive reals $r_{t,k+1}$ and $s_{t,k+1}$ such that 
$$\mu_{i_t,\omega_{k+1}}\leq r_{t,k+1} \leq r_{t,k} \leq 1 \leq s_{t,k} \leq s_{t,k+1} \leq \rho_{i_t,\omega_{k+1}}.$$ 
Inductively, for each $1\leq t\leq l$, we get the sequences of positive reals $\{r_{t,j}\}_{j\geq k}$ and $\{s_{t,j}\}_{j\geq k}$ depending on the subfamily of weights $\{\omega_j\}_{j\geq k}$ such that
\begin{align*}
& r_{t,j+1} \leq r_{t,j} \leq 1 \leq s_{t,j} \leq s_{t,j+1}  \\
\text{and} \quad &\mu_{i_t,\omega_j}\leq r_{t,j} \leq 1 \leq s_{t,j} \leq \rho_{i_t,\omega_j}
\end{align*}
for all $j\geq k$. For $j\geq k$, construct the weight $\nu_{j}$ on $\mathbb{Z}^\mathbb{N}$ by 
\begin{align*}
    \nu_{j}(\alpha)= \prod_{t=1}^l h_{t,j}^{\alpha_{i_t}}, \quad  \text{where} \quad 
    h_{t,j}=\begin{cases}
    r_{t,j}, \ \text{if} \ \alpha_{i_t}\leq 0 \\ 
    s_{t,j}, \ \text{if} \ \alpha_{i_t}\geq 0    
    \end{cases}
    \text{and} \,\, \alpha_i\in\mathbb{Z} \ \text{if} \,\, i\notin\{i_1,i_2,\dots,i_l\}.
\end{align*}
Then by construction it is clear that $\nu_j$ is a weight of order $l$ for all $j\geq k$; and $\nu_n\leq \omega_n$ and $\nu_n\leq\nu_{n+1}$ for all $n\in\mathbb{N}$. Rest of the proof follows goes along the same lines as that of Theorem~\ref{thi}.

The case when $\omega_k$ is $\infty$-nonadmissible can be dealt with in a similar fashion with the only change that we fix $l$ from $\mathbb{N}$ in place of $\{1,2,\dots,m\}$.
\end{proof}

\begin{remark}
In the above theorem, we can not define the family of weights $\{\nu_n\}$ such that $\nu_j$ for $j\geq k$ depends on infinitely many indices as we need to make use of the numbers $\rho_{i,\omega_j}^\frac{1}{l}$ and $\mu_{i,\omega_j}^\frac{1}{l}$ for defining the weights $\nu_j$ ($j\geq k$). So, taking $l=\infty$ will give these numbers to be $1$ which in turn will give all the weights $\nu_j\equiv1$ for $j\geq k$. 
\end{remark}

In view of Remark~\ref{rem:remarks for X' to be algebra of Type-II} and Remark~\ref{rem:interesting case all weight not admissible}, we shall take all $\omega_n$ to be not admissible in the following result. 

\begin{theorem} \label{thm:inductive limit d=infty}
Let $\mathcal{X}=\bigcup_{n\in\mathbb{N}}\ell^{p_n}_{\omega_n}(\mathbb{Z}^\mathbb{N},\mathcal{A})$ be an algebra of Type-II. Assume that $\omega_n$ is not admissible for all $n\in\mathbb{N}$. If $f\in\mathcal{X}$ is such that $\widehat{f}(z)$ is left invertible (respectively, right invertible, invertible) in $\mathcal{A}$ for all $z\in\mathbb{T}^\infty$. Then there exists a family of weights $\{\nu_n\}_{n\in\mathbb{N}}$ on $\mathbb{Z}^\mathbb{N}$ such that
\begin{enumerate}
    \item $\nu_n\leq\omega_n$ for all $n\in \mathbb{N}$,
    \item $\nu_{n+1} \leq \nu_n$ for all $n\in \mathbb{N}$, and 
    \item a left inverse (respectively, right inverse, inverse) of $f$ is in $\bigcup_{n\in\mathbb{N}}\ell^{p_n}_{\nu_n}(\mathbb{Z}^\mathbb{N},\mathcal{A})$, which is an algebra of Type-II.
\end{enumerate}
Moreover, if for some $m\in\mathbb{N}$, all $\omega_n$ are $m$-nonadmissible, then given any $l\in\{1,2,\dots,m\}$, the family $\{\nu_n\}$ can be chosen so that all $\nu_n$ are of order $l$; and if all $\omega_n$ are $\infty$-nonadmissible, then given any $l\in\mathbb{N}$ the family $\{\nu_n\}$ can be chosen so that all $\nu_n$ are of order $l$.
\end{theorem}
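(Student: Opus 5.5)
Fix $k$ minimal with $f\in\ell^{p_k}_{\omega_k}(\mathbb{Z}^\mathbb{N},\mathcal{A})$. Since $\omega_{n+1}\le\omega_n$ and $p_n\le p_{n+1}$, Lemma~\ref{lem:weighted Fourier algebras inclusions} gives $f\in\ell^{p_n}_{\omega_n}(\mathbb{Z}^\mathbb{N},\mathcal{A})$ for all $n\ge k$. I will combine two earlier constructions. The order-$l$ weights come from the proof of Theorem~\ref{thm:proj limit d=infty}. The handling of a decreasing family comes from the proof of Theorem~\ref{thm:inductive limit d=1}.

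The decreasing family forces the nested choice of the exponent set. By Lemma~\ref{lem:weights on Z^N admissible}\eqref{lem:decreasing weights on Z^N not admissible}, via \eqref{eq:rho inequality for decreasing weights on Z^N}, we have $S_{\omega_{n+1}}\subset S_{\omega_n}$ for every $n$. So I choose the $l$ indices $i_1<\dots<i_l$ inside $\bigcap_{n\ge k} S_{\omega_n}$. In the ``moreover'' part, each $S_{\omega_n}$ has exactly $m$ elements and they are nested, so they all coincide and any $l\le m$ indices work. In the $\infty$-nonadmissible case I must assume the intersection is large enough (or pick $l$ coordinates lying in every $S_{\omega_n}$). \emph{This is the main obstacle}: for the bare statement (i)--(iii) one only needs some order, and for the nested sets I would take $l=1$ with an index common to all $S_{\omega_n}$. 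If no such common index exists, I would let the $\nu_n$ for large $n$ degenerate towards constant weights, as in the proof of Theorem~\ref{thm:inductive limit d=1}.

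For the base step at $n=k$, I follow the proof of Theorem~\ref{thm:proj limit d=infty} verbatim. Using the numbers $\rho_{i_t,\omega_k}^{1/l}$ and $\mu_{i_t,\omega_k}^{1/l}$, $\widehat f$ extends continuously to the product set $D$, which has annuli in the coordinates $i_t$ and $\mathbb{T}$ elsewhere. The set of left invertible elements of $\mathcal{A}$ is open and $\mathbb{T}^\infty$ is compact in the product topology, so we obtain radii $r_{t,k}\le1\le s_{t,k}$, strict where the corresponding $\rho$ or $\mu$ is not $1$. These define an order-$l$ weight $\nu_k\le\omega_k$ such that $\widehat f(z)$ is left invertible on the associated product of annuli. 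Theorem~\ref{thi} then gives a left inverse of $f$ in $\ell^{p_k}_{\nu_k}(\mathbb{Z}^\mathbb{N},\mathcal{A})$.

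For the inductive step with $j>k$, by \eqref{eq:rho inequality for decreasing weights on Z^N} the admissible range for $\omega_{j}$ lies inside that for $\omega_{j-1}$. I choose
\[
r_{t,j-1}\le r_{t,j}\le1\le s_{t,j}\le s_{t,j-1},
\]
keeping them inside the $\omega_j$-range and strict where possible, and define $\nu_j$ by the same formula. The region of left invertibility for $\nu_j$ is contained in that for $\nu_{j-1}$, so $\widehat f$ stays left invertible there. This gives $\nu_{j}\le\nu_{j-1}$ and $\nu_j\le\omega_j$.

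For $n<k$ I set $\nu_n=\omega_n$. Then $\nu_k\le\omega_k\le\omega_{k-1}=\nu_{k-1}$, so monotonicity holds throughout. The resulting union $\bigcup_n\ell^{p_n}_{\nu_n}(\mathbb{Z}^\mathbb{N},\mathcal{A})$ is of Type-II and contains a left inverse of $f$. The right-inverse and two-sided cases are identical, using the open sets of right invertible and of invertible elements.
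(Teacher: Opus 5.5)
Your proof follows the paper's. Both use the nesting $S_{\omega_{n+1}}\subset S_{\omega_n}$ from Lemma~\ref{lem:weights on Z^N admissible}\eqref{lem:decreasing weights on Z^N not admissible} and take indices in $\bigcap_n S_{\omega_n}$. Both build the first order-$l$ weight from $\rho_{i_t,\omega_k}^{1/l},\mu_{i_t,\omega_k}^{1/l}$ and compactness, as in Theorem~\ref{thm:proj limit d=infty}. Both then nest the radii in the opposite direction using \eqref{eq:rho inequality for decreasing weights on Z^N}.

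There is one small slip. Setting $\nu_n=\omega_n$ for $n<k$, copied from Theorem~\ref{thm:inductive limit d=1}, violates ``all $\nu_n$ are of order $l$''. Put $\nu_n:=\nu_k$ for $n<k$ instead. This is allowed because $\nu_k\le\omega_k\le\omega_n$, and the paper's proof indexes the radii by all $n\in\mathbb{N}$.

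The obstacle you flag in the $\infty$-nonadmissible case is real, and the paper does not overcome it. Its proof asserts that $\bigcap_n S_{\omega_n}$ has at least $m$ elements as a consequence of the nesting. That is valid only when the $S_{\omega_n}$ are finite, since a decreasing chain of finite sets stabilizes.

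For a counterexample, take $p_n\equiv1$ and $\omega_n(\alpha)=2^{\sum_{i\ge n}|\alpha_i|}$. Then $\omega_{n+1}\le\omega_n$. For $i\ge n$ one has $\rho_{i,\omega_n}=\tfrac12$ and $\mu_{i,\omega_n}=2$, while for $i<n$ both equal $1$. So every $\omega_n$ is $\infty$-nonadmissible with $S_{\omega_n}=\{n,n+1,\dots\}$, and $\bigcap_n S_{\omega_n}=\emptyset$.

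No decreasing family of genuine order-$l$ weights, meaning $(r_j,s_j)\neq(1,1)$ for each $j$, can then satisfy (i) and (ii):
\begin{enumerate}
\item For such a weight one computes $\rho_{i_j,\nu}=r_j$ and $\mu_{i_j,\nu}=s_j$, so $S_\nu$ is exactly its index set.
\item Since $\nu\le\omega$ implies $S_\nu\subset S_\omega$ (the computation behind Lemma~\ref{lem:weights on Z^N admissible}), $\nu_{n+1}\le\nu_n$ forces all the $l$-element index sets to coincide with a single set $I$.
\item Then $\nu_n\le\omega_n$ forces $I\subset\bigcap_n S_{\omega_n}=\emptyset$, a contradiction.
\end{enumerate}

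So the $\infty$ clause holds only in the degenerate sense the definition permits, namely $r_j=s_j=1$, e.g.\ $\nu_n\equiv 1$. Likewise (i)--(iii) alone are already met by $\nu_n\equiv1$, using Theorem~\ref{thi} with the trivial weight. Your fallback is therefore formally fine, and the real content of the theorem lies in nondegeneracy.

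The hypothesis you propose, $\bigl|\bigcap_n S_{\omega_n}\bigr|\ge l$, is the right one. The argument above shows it is necessary, and your construction (with the $n<k$ fix) shows it is sufficient.
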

\begin{proof}
We just show the cases when all $\omega_n$ are at least $m$-nonadmissible for some $m\in\mathbb{N}$. In that case, from Lemma~\ref{lem:weights on Z^N admissible} \eqref{lem:decreasing weights on Z^N not admissible}, it follows that $\bigcap_{n\in\mathbb{N}} S_{\omega_n}$ has cardinality at least $m$. Let $l\in\{1,2,\dots,m\}$. Choose $i_1<i_2<\dots<i_l$ from $\bigcap_{n\in\mathbb{N}} S_{\omega_n}$. Now for each $1\leq t\leq l$, as done in previous theorem, using \eqref{eq:rho inequality for decreasing weights on Z^N} in place of \eqref{eq:rho inequality for increasing weights on Z^N}, we get the sequences of positive reals $\{r_{t,n}\}_{n\in\mathbb{N}}$ and $\{s_{t,n}\}_{n\in\mathbb{N}}$ such that
\begin{align*}
& r_{t,n} \leq r_{t,n+1} \leq 1 \leq s_{t,n+1} \leq s_{t,n} \\
\text{and} \quad &\mu_{i_t,\omega_n}\leq r_{t,n} \leq 1 \leq s_{t,n} \leq \rho_{i_t,\omega_n} \quad \text{for all} \quad n\in\mathbb{N}.
\end{align*} 
The rest of the proof follows by constructing the weights as in the previous theorem and using arguments similar to the ones given in the proof of Theorem~\ref{thm:inductive limit d=1}.
\end{proof}

\begin{remark} \label{rem:particular not true for higher dimensions}
    The particular cases in one dimension case for admissible weights, Theorem~\ref{thm:proj limit d=1} and Theorem~\ref{thm:inductive limit d=1}, do not follow for the multi-dimensional cases as the definition of admissible weights for weights on $\mathbb{Z}^d$ ($d>2$) and $\mathbb{Z}^\mathbb{N}$ considered here differs from the usual definition with the GRS condition.
\end{remark}

Extend the definition of the extended GRS condition from $\mathbb{Z}^d$ to $\mathbb{Z}^\mathbb{N}$ as follows: a family of weights $\{\omega_n\}_{n\in\mathbb{N}}$ on $\mathbb{Z}^\mathbb{N}$ the extended GRS condition if
\begin{align*} 
    \inf_{n\in\mathbb{N}} \left( \lim_{k\to\infty} \omega_n(km)^\frac{1}{k} \right) =1 \quad \text{for all} \quad m\in\mathbb{Z}^\mathbb{N}.
\end{align*} 
Then we have the following theorem whose proof can be verified easily using Proposition~\ref{prop:inverse-closedness} and techniques of Theorem~\ref{thm:Fageot et. al. Type-II} with Lemma~\ref{lem:l^p(X,C) subset l^p(X,A)}.

\begin{theorem} [Inverse-closedness in higher dimension] \label{thm:inverse-closedness in higher dimension}
\noindent
\begin{enumerate}
    \item \label{1thm:inverse-closedness in higher dimension} The algebras of Type-I: 
    \begin{align*}
        \bigcap_{n\in\mathbb{N}}\ell^{p_n}_{\omega_n}(\mathbb{Z}^d,\mathcal{A}) \quad \text{and} \quad  \bigcap_{n\in\mathbb{N}}\ell^{p_n}_{\omega_n}(\mathbb{Z}^\mathbb{N},\mathcal{A})
    \end{align*}
    are inverse-closed if and only if each $\omega_n$ satisfies the GRS condition for all $n\in\mathbb{N}$.

    \item \label{2thm:inverse-closedness in higher dimension} The algebras of Type-II: 
        \begin{align*}
        \bigcup_{n\in\mathbb{N}}\ell^{p_n}_{\omega_n}(\mathbb{Z}^d,\mathcal{A}) \quad \text{and} \quad  \bigcup_{n\in\mathbb{N}}\ell^{p_n}_{\omega_n}(\mathbb{Z}^\mathbb{N},\mathcal{A})
    \end{align*}
    are inverse-closed if and only if $\{\omega_n\}_{n\in \mathbb{N}}$ satisfies the extended GRS condition.
\end{enumerate}
\end{theorem}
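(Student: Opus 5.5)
The plan is to reduce both parts to the scalar, single-weight results via Lemma~\ref{lem:l^p(X,C) subset l^p(X,A)}, Proposition~\ref{prop:inverse-closedness} and Lemma~\ref{lem:weighted Fourier algebras inclusions}. The guiding observation is that inverse-closedness of a projective or inductive limit only involves invertibility of scalar-type elements $f\,1_\mathcal{A}$ together with a Fourier-transform criterion. Throughout I write $\mathbb{X}$ for $\mathbb{Z}^d$ or $\mathbb{Z}^\mathbb{N}$.

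\emph{Part (1).} For sufficiency, assume every $\omega_n$ satisfies the GRS condition. Let $f\in\bigcap_n\ell^{p_n}_{\omega_n}(\mathbb{X},\mathcal{A})$ be invertible in $\ell^1(\mathbb{X},\mathcal{A})$ with inverse $g$. By Proposition~\ref{prop:inverse-closedness}, each $\ell^{p_n}_{\omega_n}(\mathbb{X},\mathcal{A})$ is inverse-closed, so $f$ has an inverse $g_n$ in it. Since $\ell^{p_n}_{\omega_n}\subset\ell^1$ and inverses in the unital algebra $\ell^1(\mathbb{X},\mathcal{A})$ are unique, $g_n=g$ for all $n$. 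Hence $g$ lies in the intersection.

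For necessity, suppose some $\omega_k$ fails GRS. By Proposition~\ref{prop:inverse-closedness} and its scalar form (Theorem~\ref{thm:Wiener-Domar-Zelazko}, or the $\mathbb{Z}^\mathbb{N}$ fact quoted in its proof), there is a scalar $h$ that is invertible in $\ell^1$ but not in $\ell^{p_k}_{\omega_k}(\mathbb{X},\mathbb{C})$. The main obstacle is that $h$ must also lie in every $\ell^{p_n}_{\omega_n}$, and for $n>k$ the weights grow. I would therefore build $h$ concretely. The failure of GRS gives some $m$ with $\lim_j\omega_k(jm)^{1/j}=R>1$; after replacing $m$ by $-m$ if necessary, I can assume $R>1$ in the direction used below. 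Take $h=\delta_0-c\,\delta_m$ with $R^{-1}<|c|<1$. This $h$ is finitely supported, so it lies in every $\ell^{p_n}_{\omega_n}$ and hence in the intersection. Its Fourier transform is $1-cz^m$, which is nonvanishing on the torus because $|c|<1$, so $h$ is invertible in $\ell^1$.

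Next I check that $h$ is not invertible in $\ell^{p_k}_{\omega_k}$, and so not in the intersection. The $\ell^1$-inverse of $h$ is $\sum_j c^j\delta_{jm}$. Its $\ell^{p_k}_{\omega_k}$ quasi-norm is $\sum_j|c|^{jp_k}\omega_k(jm)^{p_k}$, and $\omega_k(jm)\ge R^j$ by submultiplicativity (Fekete). Since $|c|R>1$, this sum diverges. Uniqueness of inverses in $\ell^1$ then finishes the argument. Finally, Lemma~\ref{lem:l^p(X,C) subset l^p(X,A)} transfers the counterexample to the $\mathcal{A}$-valued setting via $h\mapsto h\,1_\mathcal{A}$.

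\emph{Part (2).} For sufficiency, assume the extended GRS condition \eqref{def:extended GRS condition}. Let $f\in\ell^{p_k}_{\omega_k}(\mathbb{X},\mathcal{A})$ have $\widehat f(z)$ invertible for all $z$. I would follow the proof of Theorem~\ref{thm:Fageot et. al. Type-II}: the Gelfand-type spectrum of the inductive limit is the intersection over $n$ of the polyannuli attached to $\omega_n$, and the extended GRS condition collapses this intersection to the torus. Concretely, the paper's construction gives annuli $\Gamma(r_n,s_n)$ on which $\widehat f$ is invertible (as in Theorems~\ref{thm:inductive limit d=1} and \ref{thm:inductive limit d=infty}), so $f$ is invertible in $\ell^{p_n}_{\nu_n}$ with $\nu_n\le\omega_n$. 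To land inside the original union, I choose $n$ large enough that the exponential growth rates of $\omega_n$ in each direction are within the invertibility margin of $\widehat f$. The extended GRS condition permits this, and it requires an $\mathcal{A}$-valued Bochner–Phillips version of the Fageot et al.\ argument applied at a single level $n\ge k$, with $p_n\ge p_k$. In the $\mathbb{Z}^\mathbb{N}$ case $f$ need not be finitely supported, so I expect this margin/compactness step over infinitely many coordinates to be the hardest. There I would first try to use that only finitely many coordinates matter up to a small $\ell^1$ tail, combined with Lemma~\ref{lem:norm less than 1 invertible}.

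For necessity, if the extended GRS condition fails at some $m$, then $\lim_j\omega_n(jm)^{1/j}\ge R>1$ for all $n$, with the infimum over $n$ of these limits still at least $R$; again I flip the sign of $m$ if needed. The same element $h=\delta_0-c\delta_m$ with $R^{-1}<|c|<1$ lies in the union. By the divergence computation in Part (1), its $\ell^1$-inverse lies in no $\ell^{p_n}_{\omega_n}$, so the union is not inverse-closed. Lemma~\ref{lem:l^p(X,C) subset l^p(X,A)} again lifts this to $\mathcal{A}$.
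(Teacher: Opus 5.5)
Part (1) and the necessity half of Part (2) are correct. Your finitely supported witness $h=\delta_0-c\,\delta_m$, with the Fekete bound $\omega_n(jm)\ge R^j$, is more careful than the paper's bare pointer to Proposition~\ref{prop:inverse-closedness}, because it guarantees the witness lies in every level.

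The gap is the sufficiency half of Part (2): as written, it never places an inverse of $f$ in any $\ell^{p_n}_{\omega_n}(\mathbb{X},\mathcal{A})$. The detour through $\nu_n\le\omega_n$ points the wrong way. Since $\ell^{p_n}_{\omega_n}\subseteq\ell^{p_n}_{\nu_n}$, an inverse in $\ell^{p_n}_{\nu_n}$ says nothing about membership in $\bigcup_n\ell^{p_n}_{\omega_n}$. Your key step is to choose one $n$ so that the growth rates of $\omega_n$ ``in each direction'' lie within the invertibility margin of $\widehat f$. This does not follow from the extended GRS condition, which is only pointwise in $m$. On $\mathbb{Z}^\mathbb{N}$ the step is false: $\omega_n(\alpha)=e^{\sum_{i\ge n}|\alpha_i|}$ decreases in $n$ and satisfies the extended GRS condition, yet for every $n$ all directions $\pm e_i$ with $i\ge n$ keep growth rate $e$. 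On $\mathbb{Z}^d$ the step can be rescued by using only the $2d$ directions $\pm e_1,\dots,\pm e_d$.

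The clean way to select $n$ is compactness. Let $K_n=\{z:|z^\alpha|\le\omega_n(\alpha)\text{ for all }\alpha\}$, the Gelfand space of $\ell^{p_n}_{\omega_n}(\mathbb{X})$. These sets are compact in the product topology and decrease in $n$. The extended GRS condition at the directions $\pm e_i$ gives $\bigcap_nK_n=\mathbb{T}^d$ (resp.\ $\mathbb{T}^\infty$). For $f\in\ell^{p_k}_{\omega_k}(\mathbb{X},\mathcal{A})$, the series for $\widehat f$ converges uniformly on $K_k$. Hence the set of $z\in K_k$ where $\widehat f(z)$ is not invertible is compact and misses the torus, so it misses some $K_n$.

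Your tail idea can also do this, but only when applied to $\widehat f$ itself. Let $E_N=\{\alpha:\alpha_i=0\text{ for }i>N\}$ and let $f_N$ be the restriction of $f$ to $E_N$. Then $\|\widehat f(z)-\widehat{f_N}(z)\|\le\sum_{\alpha\notin E_N}\|f(\alpha)\|\,\omega_k(\alpha)$ uniformly on $K_k$, which reduces matters to the finitely many directions $\pm e_1,\dots,\pm e_N$. By contrast, applying Lemma~\ref{lem:norm less than 1 invertible} to inverses of truncations in $\ell^{p_n}_{\omega_n}$ would need a bound on their norms uniform in $N$, which you do not have.

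The decisive missing ingredient is a full-spectrum vector-valued Wiener theorem: if $\widehat f(z)$ is invertible for every $z\in K_n$, then $f$ is invertible in $\ell^{p_n}_{\omega_n}(\mathbb{X},\mathcal{A})$. This is a Bochner--Phillips/Allan local-principle statement, and it must be proved for $p$-Banach algebras or cited. Theorems~\ref{thm1}, \ref{2var} and \ref{thi} only produce some smaller weight $\nu$. Lemma~\ref{lem:l^p(X,C) subset l^p(X,A)}, which the paper's one-line justification also invokes, covers only scalar-valued $f$.
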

% \begin{proof}
% The proof of \eqref{1thm:inverse-closedness in higher dimension} follows from Theorem~\ref{thm1}, Theorem~\ref{2var} and Theorem~\ref{thi}. The proof of \eqref{2thm:inverse-closedness in higher dimension} follows using Theorem~\ref{thm:Fageot et. al. Type-II} along with Lemma~\ref{lem:l^p(X,C) subset l^p(X,A)}.
% \end{proof}

\section{Hierarchy of inverse-closed algebras in quasi framework} \label{sec:application}
For this section, $\mathbb{X}$ is $\mathbb{Z}^d$ or $\mathbb{Z}^\mathbb{N}$, and we consider the following weights on $\mathbb{X}$.
\begin{enumerate}
    \item Polynomial weights: $\omega_s(m)=(1+|m|)^s$ $(m\in \mathbb{X})$ for some $s\geq0$.
    \item Exponential weights: $\nu_r(m)=e^\frac{|m|}{r+1}$ $(m\in\mathbb{X})$ for some $r\geq0$.
    \item Subexponential weights: $\eta_{a,b}(m)=e^{a|m|^b}$ $(m\in \mathbb{X})$ for some $a>0$ and $0<b<1$.
\end{enumerate}

We have the following lemma already known in the literature, but we still provide proof for completeness.

\begin{lemma}
The weights $\omega_s$ and $\eta_{a,b}$ are admissible for all $s\geq0$, $a>0$ and $0<b<1$. And the family of weights $\{\nu_n\}_{n\in\mathbb{N}}$ satisfies the extended GRS condition.
\end{lemma}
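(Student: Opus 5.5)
The proof splits into two independent parts. First I show that each $\omega_s$ and each $\eta_{a,b}$ satisfies the GRS condition $\lim_{k\to\infty}\omega(km)^{1/k}=1$ for every fixed $m\in\mathbb{X}$. Then I verify the extended GRS condition \eqref{def:extended GRS condition} for the family $\{\nu_n\}$. Throughout, $|m|$ is understood as a norm on $\mathbb{X}$, say $|m|=\sum_i|m_i|$, which is a finite sum when $\mathbb{X}=\mathbb{Z}^\mathbb{N}$ since elements are finitely supported. This choice makes $|km|=k|m|$ for $k\in\mathbb{N}$.

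\textbf{Weight property.} Before checking any limits, I confirm that each function is a weight in the sense of \eqref{def:weight}.
\begin{itemize}
\item For $\omega_s$, use $1+|m+n|\le (1+|m|)(1+|n|)$.
\item For $\nu_r$, it follows from the triangle inequality.
\item For $\eta_{a,b}$, use the subadditivity $|m+n|^b\le |m|^b+|n|^b$, which is \eqref{eq:ineqp<1} with $p=b$.
\end{itemize}

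\textbf{GRS for $\omega_s$ and $\eta_{a,b}$.} Fix $m\neq\mathbf 0$; the case $m=\mathbf 0$ is trivial.
\begin{itemize}
\item For the polynomial weight, $\omega_s(km)^{1/k}=\exp\!\big(\tfrac{s}{k}\log(1+k|m|)\big)$, and $\tfrac{\log(1+k|m|)}{k}\to0$ as $k\to\infty$. Hence the limit is $1$.
\item For the subexponential weight, $\eta_{a,b}(km)^{1/k}=\exp\!\big(a|m|^b k^{b-1}\big)$. Since $b<1$, we have $k^{b-1}\to0$, so the limit is $1$.
\end{itemize}
By the remark preceding the definition of admissibility on $\mathbb{Z}^2$ and $\mathbb{Z}^\mathbb{N}$, GRS implies admissibility in the paper's sense, and on $\mathbb{Z}$ the two notions coincide. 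So both weights are admissible. If needed, one can also check the $\rho$ and $\mu$ quantities directly, since $\omega(m)^{1/m_i}\to1$ along rays.

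\textbf{Extended GRS for $\{\nu_n\}$.} For fixed $n$ and $m$, the limit is computed exactly:
$$\lim_{k\to\infty}\nu_n(km)^{1/k}=\lim_{k\to\infty}e^{\frac{k|m|}{k(n+1)}}=e^{\frac{|m|}{n+1}}.$$
This value is not $1$ for $m\ne\mathbf 0$, so no single $\nu_n$ is GRS. However, $\inf_{n}e^{|m|/(n+1)}=1$ because $|m|/(n+1)\to0$, which is exactly \eqref{def:extended GRS condition}.

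I also note that $\nu_{n+1}\le\nu_n$, so this family fits the Type-II setting.

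No step is a real obstacle. The only point needing care is the meaning of $|m|$ on $\mathbb{Z}^d$ and $\mathbb{Z}^\mathbb{N}$. If the paper intends the Euclidean norm instead, the same computations go through, since homogeneity $|km|=k|m|$ and the triangle inequality are all that is used.
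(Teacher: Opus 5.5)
Your proposal is correct and follows the paper's proof. It computes $\lim_{k\to\infty}\omega_s(km)^{1/k}=1$ and $\lim_{k\to\infty}\eta_{a,b}(km)^{1/k}=1$ directly, and for the family obtains $\lim_{k\to\infty}\nu_n(km)^{1/k}=e^{|m|/(n+1)}$, whose infimum over $n$ is $1$; your extra checks (submultiplicativity of the three weights, and that GRS gives the paper's $\rho/\mu$-admissibility on $\mathbb{Z}^2$ and $\mathbb{Z}^\mathbb{N}$) are details the paper leaves implicit, and they are sound.
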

\begin{proof}
For each $m\in \mathbb{X}$,
\begin{align*}
    &\lim_{k\to\infty} \omega_s(km)^\frac{1}{k} = \lim_{k\to\infty} (1+k|m|)^\frac{s}{k} = 1, \\
    &\lim_{k\to\infty} \eta_{a,b}(km)^\frac{1}{k} = \lim_{k\to\infty} e^\frac{a|km|^b}{k} = \lim_{k\to\infty} e^\frac{a|m|^b}{k^{1-b}} = 1,
\end{align*}
and
\begin{align*}
    \lim_{n\to\infty} \left( \lim_{k\to\infty} \nu_n(km)^\frac{1}{k} \right) = \lim_{n\to\infty} \left( \lim_{k\to\infty} \left(e^\frac{|km|}{(n+1)}\right)^\frac{1}{k} \right) = \lim_{n\to\infty} \left( \lim_{k\to\infty} e^\frac{|m|}{(n+1)} \right) = 1,
\end{align*}
that is, $\inf_{n\in\mathbb{N}} (\lim_{k\to\infty} \nu_n(km)^\frac{1}{k})=1$. Thus, $\omega_s$ and $\eta_{a,b}$ satisfies the GRS condition and the family $\{\nu_n\}_{n\in\mathbb{N}}$ satisfies the extended GRS condition.
\end{proof}

The spaces of rapidly decreasing and exponentially decreasing sequences are characterized as
\begin{align*}
\mathcal{S}(\mathbb{Z}^d) = \bigcap_{n\in\mathbb{N}}\ell^1_{\omega_n}(\mathbb{Z}^d) \quad \text{and} \quad \mathcal{E}(\mathbb{Z}^d) = \bigcup_{n\in\mathbb{N}}\ell^1_{\nu_n}(\mathbb{Z}^d),
\end{align*}
respectively.

Here, we extend this definition of $\mathcal{S}(\mathbb{Z}^d)$ and $\mathcal{E}(\mathbb{Z}^d)$ to vector-valued and using the quasi-Beurling algebras in the following manner.

\begin{definition} \label{def:S(X,A) and E(X,A)}
For a unital Banach algebra $\mathcal{A}$, define
\begin{enumerate}
    \item the space of rapidly decreasing sequences:
    $$\mathcal{S}^{\{p_n\}}(\mathbb{X},\mathcal{A}) = \bigcap_{n\in\mathbb{N}}\ell^{p_n}_{\omega_n}(\mathbb{X},\mathcal{A}),$$ 
    where $\{p_n\}_{n\in\mathbb{N}}$ is a decreasing sequence in $(0,1]$ and $\{\omega_n\}_{n\in\mathbb{N}}$ is a family of polynomial weights on $\mathbb{X}$.

    \item the space of exponentially decreasing sequences:
    $$\mathcal{E}^{\{q_n\}}(\mathbb{X},\mathcal{A})=\bigcup_{n\in\mathbb{N}}\ell^{q_n}_{\nu_n}(\mathbb{X},\mathcal{A})$$ 
    where $\{q_n\}_{n\in\mathbb{N}}$ is an increasing sequence in $(0,1]$ and $\{\nu_n\}_{n\in\mathbb{N}}$ is a family of exponential weights on $\mathbb{X}$.
\end{enumerate}
\end{definition}

Taking $\mathcal{A=\mathbb{C}}$, $\mathbb{X}=\mathbb{Z}^d$ and $p_n=1=q_n$ for all $n\in\mathbb{N}$, we retrive back the usual $\mathcal{S}(\mathbb{Z}^d)$ and $\mathcal{E}(\mathbb{Z}^d)$. It is clear that the spaces $\mathcal{S}^{\{p_n\}}(\mathbb{X},\mathcal{A})$ and $\mathcal{E}^{\{q_n\}}(\mathbb{X},\mathcal{A})$ are algebras of Type-I and Type-II, respectively, as $\omega_n\leq\omega_{n+1}$ and $\nu_{n+1}\leq\nu_n$ for all $n\in\mathbb{N}$. Thus, we have the following theorem as an immediate consequence of Theorem~\ref{thm:inverse-closedness in higher dimension} in view of Definition~\ref{def:inverse-closed}.

\begin{theorem} \label{thm:S(X,A) and E(X,A) ere inverse-cloed} 
The algebras $\mathcal{S}^{\{p_n\}}(\mathbb{X},\mathcal{A})$ and $\mathcal{E}^{\{q_n\}}(\mathbb{X},\mathcal{A})$ are inverse-closed.
\end{theorem}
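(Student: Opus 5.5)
The plan is to deduce the statement directly from Theorem~\ref{thm:inverse-closedness in higher dimension}, after checking that both algebras are of the right type and that their weights satisfy the relevant growth conditions. The order is: structural check, weight verification, then the two cases.

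\textbf{Step 1: both algebras are of the right type.} For $\mathcal{S}^{\{p_n\}}(\mathbb{X},\mathcal{A})$ we take the weights in increasing order, for instance $\omega_n=\omega_{s_n}$ with $s_n$ nondecreasing. Then $\omega_n\leq\omega_{n+1}$, and $\{p_n\}$ is decreasing, so this is an algebra of Type-I. For $\mathcal{E}^{\{q_n\}}(\mathbb{X},\mathcal{A})$ we have $\nu_{n+1}\leq\nu_n$ because $\frac{1}{n+2}<\frac{1}{n+1}$, and $\{q_n\}$ is increasing, so this is an algebra of Type-II. By Lemma~\ref{lem:weighted Fourier algebras inclusions} both sit inside $\ell^1(\mathbb{X},\mathcal{A})$, so Definition~\ref{def:inverse-closed} applies to them.

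\textbf{Step 2: the weights satisfy the growth conditions.} Take $f$ in either algebra with $\widehat f(z)$ invertible in $\mathcal{A}$ for every $z$ (in $\mathbb{T}^d$ or $\mathbb{T}^\infty$). By the preceding lemma, each polynomial weight $\omega_s$ satisfies the GRS condition, since $\lim_k(1+k|m|)^{s/k}=1$. The same lemma shows that the exponential family $\{\nu_n\}$ satisfies the extended GRS condition, because $\lim_k\nu_n(km)^{1/k}=e^{|m|/(n+1)}$, whose infimum over $n$ is $1$.

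\textbf{Step 3: Type-I case.} By part~\eqref{1thm:inverse-closedness in higher dimension} of Theorem~\ref{thm:inverse-closedness in higher dimension}, every $f$ as above is invertible in $\mathcal{S}^{\{p_n\}}(\mathbb{X},\mathcal{A})$. Concretely, Proposition~\ref{prop:inverse-closedness} makes each $\ell^{p_n}_{\omega_n}(\mathbb{X},\mathcal{A})$ inverse-closed. The inverse of $f$ in each of these spaces coincides with its unique inverse in $\ell^1(\mathbb{X},\mathcal{A})$, so it lies in the intersection.

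\textbf{Step 4: Type-II case.} By part~\eqref{2thm:inverse-closedness in higher dimension} of Theorem~\ref{thm:inverse-closedness in higher dimension}, $f$ has an inverse in some $\ell^{q_N}_{\nu_N}(\mathbb{X},\mathcal{A})$, hence in the union $\mathcal{E}^{\{q_n\}}(\mathbb{X},\mathcal{A})$.

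\textbf{Main obstacle.} The real content is the Type-II case with vector values and $q_n<1$, that is, the extension of Theorem~\ref{thm:Fageot et. al. Type-II}. If one does not want to rely solely on the theorem as stated, I would argue directly. Write $f=\sum_m f(m)\delta_m$ with $f\in\ell^{q_k}_{\nu_k}$. Then $\widehat f$ extends continuously to the polyannulus (or product) where $|z_i|\in[e^{-1/(k+1)},e^{1/(k+1)}]$. By compactness and openness of the invertible group, $\widehat f$ stays invertible on a thinner polyannulus, of width $e^{\pm1/(N+1)}$ for some $N\geq k$. The argument of Theorem~\ref{thm:proj limit d=1} and Theorem~\ref{thm1}, with $\nu_N$ in place of $\omega_k$, then places the inverse in $\ell^{q_N}_{\nu_N}(\mathbb{X},\mathcal{A})$, using $q_N\geq q_k$.

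This is straightforward for $\mathbb{Z}^d$. For $\mathbb{Z}^\mathbb{N}$ it needs care: the polyannulus has infinitely many coordinates, and the weight $e^{|m|/(N+1)}$ involves all of them. Because of this I expect to lean on Theorem~\ref{thm:inverse-closedness in higher dimension} as stated rather than reprove it.
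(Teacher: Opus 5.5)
Your proposal is correct and follows the paper's route. The paper also notes that $\mathcal{S}^{\{p_n\}}(\mathbb{X},\mathcal{A})$ and $\mathcal{E}^{\{q_n\}}(\mathbb{X},\mathcal{A})$ are of Type-I and Type-II, uses the preceding lemma for the GRS and extended GRS conditions, and then treats the statement as an immediate consequence of Theorem~\ref{thm:inverse-closedness in higher dimension} with Definition~\ref{def:inverse-closed}, so your optional direct Type-II argument is not needed.
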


Next, we have the following theorem for the hierarchy of inverse-closed algebras in quasi framework. 

\begin{theorem}
If the sequences $\{p_n\}$ and $\{q_n\}$ are such that $0<q_n \leq p \leq p_n\leq1$ for all $n\in\mathbb{N}$, then we have the following hierarchy of inverse-closed algebras:
\begin{align*}
\mathcal{E}^{\{q_n\}}(\mathbb{X},\mathcal{A}) 
\subset \ell^{p}_{\eta_{a,b}}(\mathbb{X},\mathcal{A}) 
\subset  \mathcal{S}^{\{p_n\}}(\mathbb{X},\mathcal{A}) 
\subset \ell^{p_{n}}_{\omega_{s_{n}}}(\mathbb{X},\mathcal{A}) 
\subset \ell^{p_1}_{\omega_{s_1}}(\mathbb{X},\mathcal{A}) 
\subset \ell^1(\mathbb{X},\mathcal{A}),
\end{align*}    
for all $n\in\mathbb{N}$ with $n-1<s_n\leq n$.
\end{theorem}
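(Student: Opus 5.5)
Each of the five inclusions will be reduced to a pointwise comparison of weights plus Lemma~\ref{lem:weighted Fourier algebras inclusions}, whose proof in fact gives $\ell^{p}_{\omega}\subset\ell^{q}_{\nu}$ with $\|f\|_{\ell^q_\nu}^{1/q}\le\|f\|_{\ell^p_\omega}^{1/p}$ whenever $p\le q$ and $\nu\le\omega$. Inverse-closedness of every member is then quoted. For $\mathcal{S}^{\{p_n\}}$ and $\mathcal{E}^{\{q_n\}}$ we use Theorem~\ref{thm:S(X,A) and E(X,A) ere inverse-cloed}. For $\ell^p_{\eta_{a,b}}$ and $\ell^{p_n}_{\omega_{s_n}}$ we combine Proposition~\ref{prop:inverse-closedness} with the preceding lemma that $\omega_s$ and $\eta_{a,b}$ satisfy the GRS condition. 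The space $\ell^1(\mathbb{X},\mathcal{A})$ is inverse-closed trivially. Throughout, the weights defining $\mathcal{S}^{\{p_n\}}$ are taken to be $\omega_{s_n}$ with $n-1<s_n\le n$, so they increase and are unbounded.

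\emph{Rightmost inclusions.} Since $s_1\le s_n$ we have $\omega_{s_1}\le\omega_{s_n}$, and $p_n\le p_1$ because $\{p_n\}$ decreases. The lemma therefore gives $\ell^{p_n}_{\omega_{s_n}}\subset\ell^{p_1}_{\omega_{s_1}}\subset\ell^1$. The inclusion $\mathcal{S}^{\{p_n\}}\subset\ell^{p_n}_{\omega_{s_n}}$ holds because an intersection is contained in each of its members.

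\emph{Middle inclusion $\ell^p_{\eta_{a,b}}\subset\mathcal{S}^{\{p_n\}}$.} We must show $\ell^p_{\eta_{a,b}}\subset\ell^{p_n}_{\omega_{s_n}}$ for every $n$. From $p\le p_n$ and the lemma it suffices to have $\omega_{s_n}\le\eta_{a,b}$. This fails pointwise, but $(1+|m|)^{s}\le C_{s}\,e^{a|m|^b}$ holds for all $m$, with $C_s<\infty$ because polynomials are dominated by $e^{a t^b}$ as $t\to\infty$. The constant is harmless: $\sum\|f(m)\|^{p_n}\omega_{s_n}(m)^{p_n}\le C^{p_n}\sum\|f(m)\|^{p_n}\eta_{a,b}(m)^{p_n}$, and the passage from exponent $p$ to $p_n$ uses \eqref{eq:ineqp<1} exactly as in the lemma's proof.

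\emph{Left inclusion $\mathcal{E}^{\{q_n\}}\subset\ell^p_{\eta_{a,b}}$.} Take $f\in\ell^{q_n}_{\nu_n}$, where $\nu_n(m)=e^{|m|/(n+1)}$ and $q_n\le p$. Since $b<1$, we have $a|m|^b\le |m|/(n+1)+c_n$ for some constant $c_n$, so $\eta_{a,b}\le e^{c_n}\nu_n$. Together with $q_n\le p$ and the lemma, this gives $f\in\ell^p_{\eta_{a,b}}$. For $\mathbb{X}=\mathbb{Z}^{\mathbb{N}}$ the same estimates work verbatim, since they involve only $|m|$.

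The main obstacle is the failure of pointwise comparability of the weights, which means the inclusion lemma cannot be applied literally. I would first isolate the observation that $\ell^p_\omega=\ell^p_{C\omega}$ as sets for any constant $C\ge1$, and then verify the two elementary asymptotic bounds $\omega_s\lesssim\eta_{a,b}\lesssim\nu_n$.
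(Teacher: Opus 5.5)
Your proposal is correct and follows the paper's proof. Both dominate the weights up to multiplicative constants ($\eta_{a,b}\le K_n\nu_n$ and $\omega_{s}\le N\eta_{a,b}$) and then apply Lemma~\ref{lem:weighted Fourier algebras inclusions} together with $q_n\le p\le p_n$ and the monotonicity of $\{p_n\}$. Bounding the one-variable functions of $t=|m|$ directly, as you do, is slightly cleaner than the paper's appeal to Lemma~\ref{lem:weights bounded on compact sets}, because for $\mathbb{X}=\mathbb{Z}^{\mathbb{N}}$ the set $\{m:|m|\le M\}$ is infinite.
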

\begin{proof}
Let $n\in\mathbb{N}$. Then $\eta_{a,b}(m)=e^{a|m|^b} \leq e^\frac{|m|}{(n+1)} = \nu_n(m)$ for sufficiently large $|m|$, that is, there is some $M_n\in\mathbb{N}$ such that $\eta_{a,b}(m) \leq \nu_n(m)$ for all $|m|>M_n$. By Lemma~\ref{lem:weights bounded on compact sets}, there is some $K_n>0$ such that $\eta_{a,b}(m) \leq K_n\nu_n(m)$ for all $|m|\leq M_n$. So, we have $\eta_{a,b}(m) \leq K_n\nu_n(m)$ for all $m\in \mathbb{X}$. Thus, by Lemma~\ref{lem:weighted Fourier algebras inclusions}, $\ell^{q_n}_{\nu_n}(\mathbb{X},\mathcal{A})\subset\ell^{q_n}_{\eta_{a,b}}(\mathbb{X},\mathcal{A})$, and $q_n\leq p$ implies that $\ell^{q_n}_{\eta_{a,b}}(\mathbb{X},\mathcal{A})\subset\ell^p_{\eta_{a,b}}(\mathbb{X},\mathcal{A})$. This gives the inclusion $\mathcal{E}^{\{q_n\}}(\mathbb{X},\mathcal{A}) \subset \ell^p_{\eta_{a,b}}(\mathbb{X},\mathcal{A})$.

For each $n\in\mathbb{N}$, there exists $M_n,K_n>0$ such that $\omega_n(m)=(1+|m|)^n \leq e^{a|m|^b} = \eta_{a,b}(m)$ for $|m|>M_n$, and $\omega_n(m) \leq K_n \eta_{a,b}(m)$ for $|m|\leq M$, which gives $\omega_n\leq N_n \eta_{a,b}$. It along with $p\leq p_n$ and Lemma ~\ref{lem:weighted Fourier algebras inclusions} gives the inclusion $\ell^{p}_{\eta_{a,b}}(\mathbb{X},\mathcal{A}) \subset  \mathcal{S}^{\{p_n\}}(\mathbb{X},\mathcal{A})$.

The rest of the inclusions can be verified using the same techniques.
\end{proof}

\begin{remark}
The Definition~\ref{def:inverse-closed} allowed us to move ahead of $\ell^1(\mathbb{Z}^d)$ as shown in Lemma~\ref{lem:l^p(X,C) subset l^p(X,A)} which says that $\ell^1(\mathbb{Z}^d)$ is itself a inverse-closed algebra contained in $\ell^1(\mathbb{Z},\mathcal{A})$. In particular, for $n>1$, considering $\ell^1(\mathbb{Z}^d,\mathbb{C}^n)$, we get infinitely many algebras which strictly contain $\ell^1(\mathbb{Z}^d)$ and are inverse-closed themselves. And replacing $\mathbb{Z}^d$ by $\mathbb{Z}^\mathbb{N}$, these results still hold and its adds to the novelty.
\end{remark}

\section{Maximizing the weight} \label{sec:Maximizing weight}
In this section, we present the results for the maximization of the weight $\nu$ obtained in Theorem~\ref{thm1} (discrete case) and Theorem~\ref{thr} (continuous case).

For a weight $\omega$ on $\mathbb{Z}$, let $\Gamma_\omega=\Gamma(\rho_{1,\omega},\rho_{2,\omega})$. By $A^\mathrm{o}$ and $\overline{A}$ denote the interior and closure, respectively, of a set $A\subset\mathbb{C}$. The following is our first main theorem in this section which is for the discrete case. 

\begin{theorem} \label{thm:max weight discrete}
Let $0<p\leq1$, $\omega$ be a weight on $\mathbb{Z}$, $\mathcal A$ be a unital Banach algebra, and let $f \in \ell^p_\omega(\mathbb{Z},\mathcal A)$. If $\widehat{f}(z)$ is left invertible (respectively, right invertible, invertible) in $\mathcal A$ for all $z \in \mathbb{T}$, then there is a weight $\eta$ on $\mathbb{Z}$ satisfying the following properties:
\begin{enumerate}
\item $\eta\leq\omega$,
\item $\eta$ is constant (nonconstant) if $\omega$ is constant (nonconstant),
\item $\eta$ is admissible (nonadmissible) if $\omega$ is admissible (nonadmissible),
\item if $\nu$ is a weight on $\mathbb{Z}$ satisfying $\nu \leq M\omega$ for some $M>0$ and a left inverse (respectively, right inverse, inverse) of $f$ is in $\ell^p_\nu(\mathbb{Z},\mathcal A)$, then $\nu\leq K_\nu \eta$ for some constant $K_\nu\geq1$ which depends on $\nu$.
\end{enumerate}
In particular, if $\omega$ is admissible, then $\eta=\omega$, and if $\omega$ is not admissible with $f$ not invertible in $\ell^p_\omega(\mathbb{Z},\mathcal{A})$, then $\ell^p_\eta(\mathbb{Z},\mathcal{A})$ is the largest algebra in the chain $\ell^p_\omega(\mathbb{Z},\mathcal{A})\subset\ell^1(\mathbb{Z},\mathcal{A})$ that does not contain an inverse of $f$.
\end{theorem}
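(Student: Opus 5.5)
The plan is to build $\eta$ from the largest closed annulus inside $\Gamma_\omega$, containing $\mathbb{T}$, on which $F(z)=\sum_n f(n)z^n$ stays left invertible. We then check that every admissible $\nu$ corresponds to an annulus inside that one.

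\textbf{Step 1: the maximal annulus.} As in the proof of Theorem~\ref{thm:proj limit d=1}, $F$ converges absolutely and is continuous on $\Gamma_\omega$, and $F=\widehat f$ on $\mathbb{T}$. Let $U_l$ denote the open set of left invertible elements of $\mathcal{A}$. Set
\[
r_0=\inf\{r\in[\rho_{1,\omega},1]: F(z)\in U_l \text{ for all } z\in\Gamma(r,1)\},
\]
and define $s_0$ symmetrically as the sup over $s\in[1,\rho_{2,\omega}]$. Define $\eta(n)=r_0^n$ for $n\le 0$ and $\eta(n)=s_0^n$ for $n\ge0$. Since $r_0\le 1\le s_0$, $\eta$ is a weight. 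Because $\rho_{1,\omega}\le r_0$ and $s_0\le\rho_{2,\omega}$, we get $\eta\le\omega$, which is (i).

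For (ii) and (iii): if $\omega$ is admissible, then $\Gamma_\omega=\mathbb{T}$ and $\eta\equiv1$. In that case I would instead take $\eta=\omega$, as the ``in particular'' clause indicates; then (iv) follows from Theorem~\ref{thm1} and Remark~\ref{rem:inverse closedness if omega is GRS}, since any admissible $\nu\le M\omega$ gives $\nu\le M\omega$ directly. If $\omega$ is nonadmissible, then $\rho_{1,\omega}<1$ or $\rho_{2,\omega}>1$. Openness of $U_l$ and compactness of $\mathbb{T}$ give $r_0<1$ or $s_0>1$ correspondingly, so $\eta$ is nonconstant and nonadmissible.

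\textbf{Step 2: domination of an arbitrary $\nu$ (property (iv)).} Let $\nu\le M\omega$ and suppose $g\in\ell^p_\nu(\mathbb{Z},\mathcal{A})$ satisfies $g\star f=\delta_0$. Then $G(z)=\sum_n g(n)z^n$ converges on $\Gamma_\nu$. On $\Gamma_\nu$ we also have $|z|^n\le\nu(n)\le M\omega(n)$, and applying this along $n\to\pm\infty$ via $\nu(n)^{1/n}$ shows $\Gamma_\nu\subset\Gamma_\omega$. On $\Gamma_\nu$ both series converge absolutely, so the Cauchy product gives $G(z)F(z)=1_\mathcal{A}$, and $F(z)\in U_l$ for every $z\in\Gamma_\nu$. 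Since $\Gamma_\nu$ is an annulus containing $\mathbb{T}$, the definitions give $r_0\le\rho_{1,\nu}$ and $\rho_{2,\nu}\le s_0$.

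It remains to compare $\nu$ with $\eta$, and this is the main obstacle. From $\rho_{1,\nu}\ge r_0$ we only get $\nu(n)\ge\rho_{1,\nu}^n$ for $n\le0$, which is a lower bound, whereas we need $\nu(n)\le K\eta(n)$. The weights $\rho^n$ are the extremal \emph{lower} bounds, so an upper bound must come from elsewhere. I would try to extract it from $g\in\ell^p_\nu$ together with the identity $g\star f=\delta_0$, aiming for an estimate of the form $\nu(n)\lesssim$ (something controlled by $F^{-1}$ on $\Gamma(r_0,s_0)$). If that fails, I would fall back on the constraint $\nu\le M\omega$ combined with an explicit choice such as $\eta=\min(\omega,\text{exponential envelope})$. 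That is, I would redefine $\eta(n)=\min\{\omega(n),C r_0^n\}$ for $n\le0$, and similarly for $n\ge0$, checking submultiplicativity, and then $K_\nu$ absorbs $M$. I expect the exact form of $\eta$ to need adjusting at this stage.

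\textbf{Step 3: the chain statement.} If $f$ is not invertible in $\ell^p_\omega$, then $F$ fails to be invertible somewhere on $\Gamma_\omega$, so the maximal annulus is attained at its boundary: $F(z)$ fails to be left invertible at some $z$ with $|z|=r_0$ or $|z|=s_0$. Hence no inverse lies in $\ell^p_\eta$, because an inverse there would force invertibility on all of $\Gamma_\eta$. Any algebra $\ell^p_\nu$ in the chain containing an inverse satisfies $\nu\le K\eta$ by Step 2, so $\ell^p_\eta\subset\ell^p_\nu$. This gives the maximality claim.

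The continuous analogue for Theorem~\ref{thr} would follow by the same scheme with the strips $T_\omega$ in place of annuli.
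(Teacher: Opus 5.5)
Your proposal has a genuine gap at the point you flag yourself: property (iv) is never proved, because you miss the idea that $\rho_{2,\nu}\le s_0$ is \emph{strict} whenever $s_0<\rho_{2,\omega}$. Here is why it is strict. Since $g\in\ell^p_\nu\subset\ell^1_\nu$ and $|z|^n\le\nu(n)$ on $\Gamma_\nu$, the series $G(z)=\sum_n g(n)z^n$ converges absolutely on the \emph{closed} annulus $\Gamma_\nu$, including the circle $|z|=\rho_{2,\nu}$, and $G(z)F(z)=1_{\mathcal A}$ there. If $\rho_{2,\nu}=s_0$, then $F(z)\in U_l$ on the whole circle $|z|=s_0$. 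Openness of $U_l$, continuity of $F$ on $\Gamma_\omega$ and compactness then give $F(z)\in U_l$ on $\Gamma(1,s_0+\epsilon)$ for some $\epsilon>0$, contradicting the definition of $s_0$. (This is what the paper's step ``$\Gamma_\nu\subset D$'' encodes, $D$ being an annulus that omits the circle where left invertibility fails.) Since $\log\nu$ is subadditive, Fekete's lemma gives $\lim_{n\to\infty}\nu(n)^{1/n}=\rho_{2,\nu}<s_0$. Hence $\nu(n)\le s_0^n=\eta(n)$ for all $n\ge n_0$, the finitely many remaining $n$ go into $K_\nu$, and $n\to-\infty$ is symmetric. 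So the upper bound comes from a strict gap between exponential growth rates, not from any lower bound on $\|g(n)\|$. The non-strict inequality you derived is useless on its own, since for example $(1+n)s_0^n$ has growth rate exactly $s_0$.

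The same analysis shows that your Step 1 weight fails (iv) when the maximal annulus reaches $\partial\Gamma_\omega$ with $F$ still left invertible on that boundary circle, because then $\rho_{2,\nu}=s_0$ can occur. For instance, take $\omega(n)=e^{|n|}(1+|n|)$ and $f=\delta_0$. Then $\omega$ is nonadmissible with $\rho_{1,\omega}=e^{-1}$ and $\rho_{2,\omega}=e$, so $r_0=e^{-1}$, $s_0=e$, and your $\eta(n)=e^{|n|}$. Yet $\nu=\omega$ satisfies the hypotheses of (iv), while $\nu(n)/\eta(n)=1+|n|$ is unbounded.

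The paper avoids this in two ways. It takes $\eta=\omega$ when $F$ is left invertible on all of $\Gamma_\omega^{\mathrm o}$. In the mixed cases it lets $\eta$ agree with $\omega$ on the half-line where $r=\rho_{1,\omega}$ (respectively $s=\rho_{2,\omega}$), where (iv) follows at once from $\nu\le M\omega$. It uses $r^n$ or $s^n$ only on a side where the annulus stops strictly inside $\Gamma_\omega$, so the strictness argument above applies. If you adopt this, note that gluing $\omega$ on one half-line to $s^n$ on the other is not automatically submultiplicative, so that needs a separate check. Finally, your Step 3 relies on Step 2, so the maximality statement is also unproved as written.
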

\begin{proof}
As in Theorem~\ref{thm:proj limit d=1}, $\widehat f$ can be extended on $\Gamma(r_1,r_2)$ for some $\rho_{1,\omega}\leq r_1\leq 1 \leq r_2\leq \rho_{2,\omega}$ such that this extension $\widehat{f}(z)$ is left invertible for all $z \in \Gamma(r_1,r_2)$. Let $r$ be the infimum of all such $r_1$ and $s$ be the supremum of all such $r_2$. Then $\rho_{1,\omega}\leq r\leq1\leq s\leq\rho_{2,\omega}$. 

\textit{Case-1:} If $\omega$ is admissible or $\widehat f(z)$ is well defined and left invertible for all $z\in\Gamma_\omega^\mathrm{o}=\{z\in\mathbb{C}:\rho_{1,\omega}<|z|<\rho_{2,\omega}\}$, then $r=\rho_{1,\omega}$ and $s=\rho_{2,\omega}$, and in this case take $\eta=\omega$. 

\textit{Case-2:} Suppose that $\omega$ is not admissible and $\widehat f(z)$ is not invertible for some $z\in\Gamma_\omega^\mathrm{o}$. Then $\rho_{1,\omega}<r$ or $s<\rho_{2,\omega}$. Let 
\begin{align*}
    D=
    \begin{cases}
        \{z\in\mathbb{C}:1\leq |z|< s\}, & \text{if} \quad r=1<s, \\
        \{z\in\mathbb{C}:r<|z|\leq 1\}, & \text{if} \quad r<1=s,   \\ 
        \{z\in\mathbb{C}:r<|z|<s\}, & \text{if} \quad r<1<s.
    \end{cases}
\end{align*} Then in any case $\widehat f(z)$ is left invertible for all $z\in D$ and there is some $z_0\in\overline D$ such that $\widehat f(z_0)$ is not left invertible. Define $\eta : \mathbb{Z} \to[1,\infty)$ as follows:\\
If $\rho_{1,\omega}=r \leq 1<s<\rho_{2,\omega}$, then define 
\begin{align*}
    \eta(n)=\begin{cases} \omega(n), & n\leq 0, \\ s^n, & n>0. \end{cases}
\end{align*}
If $\rho_{1,\omega}< r<1\leq s=\rho_{2,\omega}$, then define 
\begin{align*}
    \eta(n)=\begin{cases} r^n, & n<0, \\ \omega(n), & n\geq0. \end{cases}
\end{align*}
If $\rho_{1,\omega}<r<1<s<\rho_{2,\omega}$, then define 
\begin{align*}
    \eta(n)=\begin{cases} r^n, & n\leq 0, \\ s^n, & n\geq 0. \end{cases}
\end{align*}
Then in any case $\eta$ is a weight on $\mathbb{Z}$ which is constant (admissible) if and only if $\omega$ is constant (admissible), $\eta \leq \omega$, $D\subset\Gamma_\eta$ and $\overline D=\Gamma_\eta$. 

Let $\nu$ be a weight on $\mathbb{Z}$ such that $\nu\leq M\omega$ for some $M>0$ and $\ell^p_\nu(\mathbb{Z},\mathcal A)$ contains a left inverse of $f$. Then it follows that $\rho_{1,\omega}\leq\rho_{1,\nu}\leq 1\leq\rho_{2,\nu}\leq \rho_{2,\omega}$ and $\widehat f(z)$ is left invertible in $\mathcal{A}$ for all $z\in\Gamma_\nu$. So, $\Gamma_\nu\subset D$. By definition of $r,s$ and $\eta$, 
\begin{align*}
    r=\rho_{1,\eta} \leq \rho_{1,\nu} \leq 1 \leq \rho_{2,\nu}\leq \rho_{2,\eta}=s.
\end{align*}
This means that 
\begin{align*}
    \lim_{n\to\infty}\eta(-n)^{-1/n}\leq\lim_{n\to\infty}\nu(-n)^{-1/n} \quad \text{and} \quad \lim_{n\to\infty}\eta(n)^{1/n}\geq\lim_{n\to\infty}\nu(n)^{1/n}.
\end{align*} So, there is $n_0\in\mathbb{N}$ such that $\nu(n)\leq\eta(n)$ for all $|n|>n_0$. Also, there exists some $K_\nu\geq1$ such that $\nu(n)\leq K_\nu \eta(n)$ for all $|n|\leq n_0$. So, $\nu(n)\leq K_\nu \eta(n)$ for all $n\in\mathbb{Z}$. This completes the proof.
\end{proof}

Next, we have the following result for the maximization of weight $\nu$ obtained in Theorem~\ref{thr} and it can be seen as a continuous analogue of Theorems~\ref{thm:max weight discrete} for $p=1$.

\begin{theorem} \label{thm:max weight continuous}
Let $\omega$ be a weight on $\mathbb{R}$, $\mathcal{A}$ be a unital Banach algebra with unit element $1_\mathcal{A}$, and let $f\in L^1_\omega(\mathbb{R},\mathcal{A})$. If $1_\mathcal{A}+\widehat f(it)$ is left invertible (respectively, right invertible, invertible) in $\mathcal{A}$ for all $t\in\mathbb{R}$, then there is a weight $\eta$ on $\mathbb{R}$ satisfying the following properties:
\begin{enumerate}
\item $\eta\leq\omega$,
\item $\eta$ is constant (nonconstant) if $\omega$ is constant (nonconstant),
\item $\eta$ is admissible (nonadmissible) if $\omega$ is admissible (nonadmissible), and
\item if $\nu$ is a weight on $\mathbb{R}$ such that $\nu \leq M\omega$ for some $M>0$ and a left inverse (respectively, right inverse, inverse) of $\mathbf1+f$ is in $f\in L^1_\nu(\mathbb{R},\mathcal{A})_\mathbf1$, then $\nu\leq K_\nu \eta$ for some constant $K_\nu\geq1$ which depends on $\nu$.
\end{enumerate}
In particular, if $\omega$ is admissible, then $\eta=\omega$, and if $\omega$ is not admissible with $\mathbf{1}+f$ not invertible in $L^1_\omega(\mathbb{R},\mathcal{A})_\mathbf{1}$, then $L^1_\eta(\mathbb{R},\mathcal{A})$ is the largest algebra in the chain $L^1_\omega(\mathbb{R},\mathcal{A})\subset L^1(\mathbb{R},\mathcal{A})$ that does not contain an inverse of $\mathbf{1}+f$.
\end{theorem}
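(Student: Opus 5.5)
We will transfer the proof of Theorem~\ref{thm:max weight discrete} from the annulus picture to the strip picture. For $f\in L^1_\omega(\mathbb{R},\mathcal{A})$ the integral $\widehat f(z)=\int_\mathbb{R} f(x)e^{zx}\,dx$ converges absolutely on the closed strip $T_\omega$, since $e^{ax}\le\omega(x)$ whenever $\rho_{1,\omega}\le a\le\rho_{2,\omega}$. Dominated convergence then makes $z\mapsto \widehat f(z)$ continuous on $T_\omega$.

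\textbf{Step 1: the two extremal exponents.} The set $U_l$ of left invertible elements of $\mathcal{A}$ is open, and $1_\mathcal{A}+\widehat f(it)\in U_l$ for every $t\in\mathbb{R}$. The Riemann--Lebesgue lemma, applied uniformly on vertical lines of a compact sub-strip, gives $\widehat f(a+it)\to 0$ as $|t|\to\infty$ uniformly for $a$ in compact subsets of $[\rho_{1,\omega},\rho_{2,\omega}]$. Hence $1_\mathcal{A}+\widehat f(z)$ is left invertible for large $|t|$ by Lemma~\ref{lem:norm less than 1 invertible}. On the remaining compact piece we use continuity and openness of $U_l$, as in Theorem~\ref{thm:proj limit d=1}. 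Together these give $a_1\le 0\le a_2$ with $1_\mathcal{A}+\widehat f$ left invertible on the strip $\{a_1\le \mathrm{Re}\, z\le a_2\}\subset T_\omega$. Let $r$ be the infimum of all admissible $a_1$ and $s$ the supremum of all admissible $a_2$, so that $\rho_{1,\omega}\le r\le 0\le s\le\rho_{2,\omega}$.

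\textbf{Step 2: definition of $\eta$.} If $\omega$ is admissible, or if $1_\mathcal{A}+\widehat f$ is left invertible on the whole open strip, we set $\eta=\omega$. Otherwise we mimic the three cases of Theorem~\ref{thm:max weight discrete}:
\[
\eta(x)=\omega(x)\ (x\le0),\ e^{sx}\ (x>0)\quad\text{if } r=\rho_{1,\omega}\text{ and } s<\rho_{2,\omega};
\]
symmetrically $\eta(x)=e^{rx}$ for $x<0$ and $\eta=\omega$ for $x\ge0$ in the mirror case; and $\eta(x)=e^{rx}$ for $x\le0$, $e^{sx}$ for $x\ge0$ when both are strict.

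We then verify the following.
\begin{itemize}
\item $\eta$ is submultiplicative. For mixed signs of $x,y$ this uses $e^{sx}\le\omega(x)$ and $\omega(x)\ge e^{\rho_{1,\omega}x}\ge e^{rx}$ for $x<0$.
\item $\eta\ge 1$ and $\eta\le\omega$, from the definitions of $\rho_{i,\omega}$ in \eqref{eq:rho for R}.
\item $\rho_{1,\eta}=r$ and $\rho_{2,\eta}=s$.
\end{itemize}
The constant and admissible equivalences follow because $r<0$ or $s>0$ exactly when $\omega$ is nonadmissible in Case~2.

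\textbf{Step 3: maximality.} Suppose $\nu\le M\omega$ and $\mathbf 1+f$ has a left inverse $\mathbf 1+g$ with $g\in L^1_\nu$. The inequality $e^{ax}\le\nu(x)$ for $a\in[\rho_{1,\nu},\rho_{2,\nu}]$ lets us take Fourier transforms on $T_\nu$. The identity $(1_\mathcal{A}+\widehat g(z))(1_\mathcal{A}+\widehat f(z))=1_\mathcal{A}$ then shows $1_\mathcal{A}+\widehat f$ is left invertible on $T_\nu$. Also, $\nu\le M\omega$ gives $\rho_{1,\omega}\le\rho_{1,\nu}$ and $\rho_{2,\nu}\le\rho_{2,\omega}$, because the factor $M^{1/x}\to1$ is handled through the limit form of $\rho$. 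By the definition of $r$ and $s$ we get $r\le\rho_{1,\nu}\le0\le\rho_{2,\nu}\le s$.

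It remains to convert this into $\nu\le K_\nu\eta$. We use $\rho_{2,\nu}=\lim_{x\to\infty}\frac{\log\nu(x)}{x}$ (Fekete, via submultiplicativity), and likewise at $-\infty$. When $\eta=e^{sx}$ on $x>0$, we would like $\nu(x)\le C e^{sx}$. The main obstacle is that Fekete only yields $\nu(x)\le e^{(s+\epsilon)x}$ eventually, not a bounded ratio. Here I would instead use the infimum form directly: $\rho_{2,\nu}=\inf_{x>0}\frac{\log\nu(x)}{x}$ gives $e^{\rho_{2,\nu}x}\le\nu(x)$, which points the wrong way. So I would argue as in the discrete proof and accept the same limit-based comparison. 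On the $\omega$-branches, $\nu\le M\omega=M\eta$ is immediate. On compact sets both weights are bounded by Lemma~\ref{lem:weights bounded on compact sets}, which absorbs the constant into $K_\nu$.

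The "in particular" statement then follows from Lemma~\ref{lem:weighted Fourier algebras inclusions}, read in the $L^1$ setting.
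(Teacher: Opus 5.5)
Your route is the paper's own: uniform Riemann--Lebesgue plus compactness to produce $r,s$, the same three-case $\eta$, and the comparison $r\le\rho_{1,\nu}\le 0\le\rho_{2,\nu}\le s$. However, there are two gaps.

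\textbf{The final comparison in Step~3.} Accepting the limit-based comparison does not close this step. The inequality $\rho_{2,\nu}\le s$ alone does not give $\nu\le Ce^{sx}$ on $(0,\infty)$: the weight $\nu(x)=e^{\max(0,sx)}(1+|x|)$ has $\rho_{2,\nu}=s$ while $\nu(x)e^{-sx}$ is unbounded. The paper's proof makes the same jump when it passes from the inequality of limits to ``$\nu(x)\le\eta(x)$ for $|x|>\delta$''.

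The missing idea is that the inequality is \emph{strict} on each side where $\eta$ is exponential. Suppose $s<\rho_{2,\omega}$. Your Step~1 argument works on all of $T_\omega$, since $[\rho_{1,\omega},\rho_{2,\omega}]$ is compact. It shows that the set of $a$ for which $1_\mathcal{A}+\widehat f$ is left invertible on the whole line $\mathrm{Re}\,z=a$ is relatively open in $[\rho_{1,\omega},\rho_{2,\omega}]$. Maximality of $s$ then forces a point $z_0$ with $\mathrm{Re}\,z_0=s$ at which $1_\mathcal{A}+\widehat f(z_0)$ is not left invertible. A left inverse in $L^1_\nu(\mathbb{R},\mathcal{A})_\mathbf{1}$ makes $1_\mathcal{A}+\widehat f$ left invertible on the closed strip $T_\nu$, so $\rho_{2,\nu}<s$. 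Fekete's lemma (valid because $\nu$ is bounded on compacta) gives $\log\nu(x)/x\to\rho_{2,\nu}<s$, so $\nu(x)\le e^{sx}$ for large $x$. Lemma~\ref{lem:weights bounded on compact sets} then yields $\nu(x)\le Ce^{sx}$ for all $x\ge0$. Symmetrically, $\rho_{1,\nu}>r$ when $r>\rho_{1,\omega}$.

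\textbf{Submultiplicativity of $\eta$ in the mixed cases.} Your verification fails there, and the paper only asserts it. When $\eta=\omega$ on $(-\infty,0]$ and $\eta(x)=e^{sx}$ for $x>0$, the case $x>0>y$ with $x+y\le 0$ requires $\omega(x+y)\le e^{sx}\omega(y)$. The facts you cite do not give this, and it can be false. Take $\omega(x)=e^{2|x|}(1+|\sin 10x|)$ and the paper's $f(x)=-e^{-x}$ on $[0,1]$, $0$ elsewhere. Then $1+\widehat f(z)=0$ iff $e^{z-1}=z$, which has the root $z=1$ and none with $\mathrm{Re}\,z<1$. So $r=-2=\rho_{1,\omega}$ and $s=1<2=\rho_{2,\omega}$. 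Now $x=\pi/20$, $y=-\pi/10$ give $\eta(x+y)=2e^{\pi/10}>e^{\pi/4}=\eta(x)\eta(y)$. The doubly exponential case is fine, since $\max(rx,sx)$ is subadditive.

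A repair is $\eta(x)=\inf_{y\le x}\omega(y)e^{s(x-y)}$.
\begin{itemize}
\item Its logarithm is an infimal convolution of subadditive functions, so $\eta$ is submultiplicative.
\item $\log\eta(x)-sx$ is nonincreasing, so $\eta$ is Borel.
\item $e^{\max(rx,sx)}\le\eta(x)\le\omega(x)$ for all $x$, and $\eta(x)\le\omega(0)e^{sx}$ for $x\ge0$. These give $1\le\eta\le\omega$, $\rho_{1,\eta}=r$ and $\rho_{2,\eta}=s$.
\item Maximality follows from the strict inequality above, via $\nu(x)\le\nu(y)\nu(x-y)\le MC\,\omega(y)e^{s(x-y)}$ for all $y\le x$.
\end{itemize}
The mirror case is handled symmetrically with $\inf_{y\ge x}\omega(y)e^{r(x-y)}$.
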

\begin{proof}
Let $T_\omega=\{x+iy\in\mathbb{C}:\rho_{1,\omega}\leq x \leq\rho_{2,\omega}, y\in\mathbb{R}\}$. Since $f\in L^1_\omega(\mathbb{R},\mathcal{A})$, by an application of the Riemann-Lebesgue lemma, 
\begin{align*}
    \lim_{|z|\to\infty} \|\widehat{f}(z)\|=0.
\end{align*}
This implies that there is some $M>0$ such that $$\|1_\mathcal{A}-(1_\mathcal{A}+\widehat{f}(x+iy))\|=\|\widehat{f}(x+iy)\|<1$$ for all $x+iy\in T_\omega$ with $|y|>M.$ By Lemma~\ref{lem:norm less than 1 invertible}, $1_\mathcal{A}+\widehat{f}(x+iy)$ is left invertible in $\mathcal{A}$ for all $x+iy\in T_\omega$ with $|y|>M.$ Let $y\in\mathbb{R}$ with $|y|\leq M$. Then by hypothesis $1_\mathcal{A}+\widehat{f}(iy)$ is left invertible in $\mathcal{A}$. Since the set of all left invertible elements of $\mathcal{A}$ is open and $1_\mathcal{A}+\widehat{f}$ is continuous in $T_\omega$, there is $\delta_y>0$ such that $1_\mathcal{A}+\widehat{f}(z)$ is left invertible in $\mathcal{A}$ for all $z\in B(iy,\delta_y) \cap T_\omega$. By compactness of $\{iy:|y|\leq M\}$, we get $r_1,r_2\in\mathbb{R}$ such that $\rho_{1,\omega}\leq r_1 \leq 0 \leq r_2 \leq \rho_{2,\omega}$ and $1_\mathcal{A}+\widehat{f}(z)$ is left invertible in $\mathcal{A}$ for all $z=x+iy\in [r_1,r_2] + i [-M,M]$. Thus, we get $r_1,r_2\in\mathbb{R}$ such that $\rho_{1,\omega}\leq r_1 \leq 0 \leq r_2 \leq \rho_{2,\omega}$ and $1_\mathcal{A}+\widehat{f}(z)$ is left invertible in $\mathcal{A}$ for all $z=x+iy\in [r_1,r_2] + i\mathbb R$. 

Let $r$ be the infimum of all such $r_1$, and let $s$ be the supremum of all such $r_2$. Then $\rho_{1,\omega}\leq r\leq 0 \leq s\leq\rho_{2,\omega}$. 

\textit{Case-1:} If $\omega$ is admissible or $1_\mathcal{A}+\widehat f(z)$ is left invertible for all $z\in T_\omega^\mathrm{o}=\{x+iy\in\mathbb{C}:\rho_{1,\omega}<x<\rho_{2,\omega}\}$, then $r=\rho_{1,\omega}$ and $s=\rho_{2,\omega}$. Take $\eta=\omega$. 

\textit{Case-2:} Suppose that $\omega$ is not admissible and $1_\mathcal{A}+\widehat f(z)$ is not invertible for some $z\in T_\omega^\mathrm{o}$. Then $\rho_{1,\omega}<r$ or $s<\rho_{2,\omega}$. Take 
\begin{align*}
    D=
    \begin{cases}
        \{x+iy\in\mathbb{C}:0\leq x< s\}, & \text{if} \quad r=0<s,\\
        \{x+iy\in\mathbb{C}:r<x\leq 0\}, & \text{if} \quad r<0=s,\\
        \{x+iy\in\mathbb{C}:r<x<s\}, & \text{if} \quad r<0<s.
    \end{cases}
\end{align*}
 Define $\eta:\mathbb{R}\to [1,\infty)$ as follows:\\
If $\rho_{1,\omega}=r\leq0<s<\rho_{2,\omega}$, then $$\eta(x)=\begin{cases} \omega(x) & x\leq0 \\ e^{r_2x} & x>0 \end{cases}.$$
If $\rho_{1,\omega}<r<0\leq s=\rho_{2,\omega}$, then $$\eta(x)=\begin{cases} e^{r_1x} & x\leq0 \\ \omega(x) & x>0 \end{cases}.$$
If $\rho_{1,\omega}<r<0<s<\rho_{2,\omega}$, then $$\eta(x)=\begin{cases} e^{r_1x} & x\leq0 \\ e^{r_2x} & x>0 \end{cases}.$$
Then in any case $\eta$ is a weight on $\mathbb{R}$ which is constant (admissible) if and only if $\omega$ is constant (admissible), $\eta \leq \omega$, $D\subset T_\eta$ and $\overline D=T_\eta$. 

Let $\nu$ be a weight on $\mathbb{R}$ such that $\nu\leq M\omega$ for some $M>0$ and $L^1_\nu(\mathbb{R},\mathcal{A})_\mathbf1$ contains a left inverse of $\mathbf1+f$. Then $\rho_{1,\omega}\leq\rho_{1,\nu}\leq1\leq\rho_{2,\nu}\leq \rho_{2,\omega}$ and $1_\mathcal{A}+\widehat f(z)$ is left invertible in $\mathcal{A}$ for all $z\in S_\nu$. So, $S_\nu\subset D$. Also, 
\begin{align*}
    r=\rho_{2,\eta} \leq \rho_{1,\nu} \leq 1 \leq \rho_{2,\nu}\leq s=\rho_{2,\eta}
\end{align*} 
implies that 
\begin{align*}
    \lim_{x\to\infty}\eta(-x)^{-1/x}\leq\lim_{x\to\infty}\nu(-x)^{-1/x} \quad \text{and} \quad \lim_{x\to\infty}\eta(x)^{1/x}\geq\lim_{x\to\infty}\nu(x)^{1/x}.
\end{align*}
So, there is some $\delta>0$ such that $\nu(x)\leq\eta(x)$ for all $|x|>\delta$. By Lemma~\ref{lem:weights bounded on compact sets}, $\eta$ and $\nu$ are bounded on $[-\delta,\delta]$. Thus, there is $K_\nu\geq1$ such that $\nu(x)\leq K_\nu \eta(x)$ for all $|x|\leq \delta$. So, $\nu(x)\leq K_\nu \eta(x)$ for all $x\in\mathbb{R}$. This concludes the proof.
\end{proof}

\begin{remark} The following explains why, in certain cases, we are unable to obtain the smallest algebra in the given chain that contains an inverse of the given function.
\begin{enumerate}
    \item In Theorem~\ref{thm:max weight discrete}, $f$ may not have an inverse in $\ell^p_\eta(\mathbb{Z},\mathcal{A})$. For example, consider the function $f\in\ell^1_\omega(\mathbb{Z})$ given by $f(0)=2$, $f(1)=-1$ and zero elsewhere, where $\omega(n)=e^{|n|}$ $(n\in\mathbb{Z})$. Its Fourier transform will be $\widehat{f}(z)=2-z$ for $z\in\mathbb{T}$. Then the maximized weight $\eta$ on $\mathbb{Z}$ will be defined as $\eta(n)=2^n$ $(n\geq0)$ and $\eta(n)=e^{-n}$ $(n\leq0)$. Note that $\frac{1}{f}$ is not in $\ell^1_\eta(\mathbb{Z})$.

    \item Similarly, in Theorem~\ref{thm:max weight continuous}, $\mathbf{1}+f$ may not have an inverse in $L^1_\eta(\mathbb{R},\mathcal{A})_\mathbf{1}$. For example, consider the function $f(x)=-e^{-x}$ for $0\leq x \leq 1$ and 0 otherwise in $L^1_\omega(\mathbb{R},\mathbb{C})$ for $\omega(x)=e^{2|x|}$ $(x\in\mathbb{R})$. Then the maximized weight $\eta$ on $\mathbb{R}$ will be defined as $\eta(x)=e^x$ $(x\geq0)$ and $\eta(x)=e^{-2x}$ $(x\leq0)$. Note that the function $\mathbf{1}+f$ is not invertible in $L^1_\eta(\mathbb{R},\mathbb{C})_\mathbf{1}$.
\end{enumerate}
\end{remark}

We state two corollaries for algebras of Type-I and Type-II in one dimension combining Theorem~\ref{thm:proj limit d=1} and Theorem~\ref{thm:inductive limit d=1} with Theorem~\ref{thm:max weight discrete}.

\begin{corollary} \label{cor:Type-I}
Let $\mathcal{X}=\bigcap_{n\in\mathbb{N}}\ell^{p_n}_{\omega_n}(\mathbb{Z},\mathcal{A})$ be an algebra of Type-I, and let $f\in\mathcal{X}$ be such that $\widehat{f}(z)$ is left invertible (respectively, right invertible, invertible) in $\mathcal{A}$ for all $z\in\mathbb{T}$. Further, assume that 
\begin{enumerate}
    \item \label{(i) further assumption Type-I} $\omega_n$ is not admissible for all $n\in\mathbb{N}$  and $f\in\ell^{p_1}_{\omega_1}(\mathbb{Z},\mathcal{A})$; and
    \item \label{(ii) further assumption Type-I} either $\{p_n\}_{n\in\mathbb{N}}$ is strictly increasing or $\{\omega_n\}_{n\in\mathbb{N}}$ is such that $1\neq\rho_{1,\omega_{n+1}}<\rho_{1,\omega_n}$ or $\rho_{2,\omega_n}<\rho_{2,\omega_{n+1}}\neq1$ for all $n\in\mathbb{N}$.
\end{enumerate}
Then there exists a family of weights $\{\eta_n\}_{n\in\mathbb{N}}$ on $\mathbb{Z}$ such that for all $n\in \mathbb N$,
\begin{enumerate}
    \item $\eta_n\leq\omega_n$,
    \item $\eta_n$ is constant (nonconstant) if $\omega_n$ is constant (nonconstant),
    \item $\eta_n$ is admissible (nonadmissible) if $\omega_n$ is admissible (nonadmissible),
    \item $\eta_n\leq\eta_{n+1}$, and 
    \item if $\{\nu_n\}_{n\in\mathbb{N}}$ is a family of weights on $\mathbb{Z}$ such that $\nu_n \leq \omega_n$ and a left inverse (respectively, right inverse, inverse) of $f$ is in $\bigcap_{n\in\mathbb{N}}\ell^{p_n}_{\nu_n}(\mathbb{Z},\mathcal A)$, then $\nu_n\leq K_{\nu_n} \eta_n$ for some constant $K_{\nu_n}\geq1$ which depends on $\nu_n$ for all $n\in\mathbb{N}$.
\end{enumerate} 
\end{corollary}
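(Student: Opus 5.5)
The plan is to apply Theorem~\ref{thm:max weight discrete} separately to each $\omega_n$ with exponent $p_n$, and then check that the resulting maximized weights are increasing in $n$. Since $\{\omega_n\}$ increases and $\{p_n\}$ decreases, Lemma~\ref{lem:weighted Fourier algebras inclusions} does not give $f\in\ell^{p_n}_{\omega_n}(\mathbb{Z},\mathcal{A})$ for free. We simply use $f\in\mathcal{X}\subset\ell^{p_n}_{\omega_n}(\mathbb{Z},\mathcal{A})$, which holds for every $n$; assumption~\eqref{(i) further assumption Type-I} gives in addition $f\in\ell^{p_1}_{\omega_1}$. For each $n$, Theorem~\ref{thm:max weight discrete} then produces $r^{(n)}\le 1\le s^{(n)}$ and a weight $\eta_n\le\omega_n$ with the constancy and admissibility properties (ii)--(iii). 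Here $r^{(n)}$ is the infimum of admissible inner radii and $s^{(n)}$ the supremum of outer radii within $\Gamma_{\omega_n}$ on which the extension of $\widehat f$ is left invertible. This already yields (i)--(iii).

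For the maximality property (v), let $\{\nu_n\}$ be given with $\nu_n\le\omega_n$ and a left inverse $g$ of $f$ in $\bigcap_n\ell^{p_n}_{\nu_n}$. Then $g\in\ell^{p_n}_{\nu_n}(\mathbb{Z},\mathcal{A})$ for each fixed $n$. Applying item (iv) of Theorem~\ref{thm:max weight discrete} with $M=1$ gives $\nu_n\le K_{\nu_n}\eta_n$ directly, so (v) is a termwise consequence.

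The main work is (iv), namely $\eta_n\le\eta_{n+1}$. By \eqref{eq:rho inequality for increasing weights on Z}, $\Gamma_{\omega_n}\subset\Gamma_{\omega_{n+1}}$, and the maximal annulus of left invertibility of $\widehat f$ is intrinsic to $f$: any annulus where the Laurent series converges and is left invertible in $\Gamma_{\omega_n}$ is also admissible for $\omega_{n+1}$. Hence $r^{(n+1)}\le r^{(n)}$ and $s^{(n)}\le s^{(n+1)}$. In the case-3 form $\eta(m)=r^m$ for $m\le0$ and $s^m$ for $m\ge0$, this monotonicity immediately gives $\eta_n\le\eta_{n+1}$.

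The obstacle is the mixed cases, where $\eta_n$ coincides with $\omega_n$ on one half-line, and Case-1, where $\eta_n=\omega_n$. There one must compare, for instance, $\omega_n(m)$ with $s^{(n+1)m}$ for $m>0$. This fails in general, since $\omega_n$ may exceed $\rho_{2,\omega_n}^m$ far out, and that is exactly why assumption~\eqref{(ii) further assumption Type-I} is imposed. I would use the strict inequalities $\rho_{1,\omega_{n+1}}<\rho_{1,\omega_n}$ and $\rho_{2,\omega_n}<\rho_{2,\omega_{n+1}}$ to force the maximal annulus for $n+1$ either to stay strictly inside $\Gamma_{\omega_{n+1}}$, putting $\eta_{n+1}$ in exponential form, or to equal it. In the first situation I then compare $\omega_n(m)\ge\rho_{2,\omega_n}^m$ against $s^{(n+1)m}$. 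Where a pure pointwise comparison is unavailable, I would modify $\eta_{n+1}$ to $\max(\eta_n,\eta_{n+1})$. This is still a weight, since a maximum of submultiplicative functions is submultiplicative, and it is still $\le\omega_{n+1}$ because $\eta_n\le\omega_n\le\omega_{n+1}$. It also still satisfies (ii), (iii) and (v), since its $\rho$-values coincide with those of $\eta_{n+1}$ by the annulus monotonicity above. When $\{p_n\}$ is strictly monotone instead, I would perform the same max-modification; checking that this does not destroy property (v) is the step I expect to need the most care.
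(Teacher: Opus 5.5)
Your argument is correct once you apply the maximum in every case. With that change it becomes a genuinely different route from the paper's, and a sounder one. The paper gives no separate proof; it only ``combines'' Theorem~\ref{thm:proj limit d=1} with Theorem~\ref{thm:max weight discrete}. That is, it takes $\eta_n$ to be the weight of Theorem~\ref{thm:max weight discrete} for $(\omega_n,p_n)$ and reads off $\eta_n\le\eta_{n+1}$ from the nested radii $r^{(n+1)}\le r^{(n)}\le 1\le s^{(n)}\le s^{(n+1)}$.

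You correctly saw that this breaks in the mixed cases, and hypothesis (ii) does not repair it. Take $\omega_n(m)=(1+|m|)(n+1)^{\max(m,0)}$ and $f=(2\delta_0-\delta_1)1_{\mathcal{A}}$, so $\widehat f(z)=(2-z)1_{\mathcal{A}}$. Then $\rho_{1,\omega_n}=1$ and $\rho_{2,\omega_n}=n+1$ is strictly increasing, so (i) and (ii) hold. Case-1 gives $\eta_1=\omega_1$, but $\eta_2(m)=2^m$ for $m>0$, so $\eta_1(1)=4>2=\eta_2(1)$.

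Your attempted use of (ii) is not an argument. The radius $s^{(n+1)}$ is by definition either $<\rho_{2,\omega_{n+1}}$ or equal to it, whatever (ii) says. The bound $\omega_n(m)\ge\rho_{2,\omega_n}^m$ also points the wrong way for proving $\eta_n\le\eta_{n+1}$. So drop that part and set $\tilde\eta_n=\max_{k\le n}\eta_k$ for every $n$. Then:
\begin{enumerate}
\item $\tilde\eta_n$ is a weight, and $\tilde\eta_n\le\omega_n$ since $\eta_k\le\omega_k\le\omega_n$.
\item It is increasing by construction.
\item By Fekete, $\rho_{1,\omega}=\lim_m\omega(-m)^{-1/m}$ and $\rho_{2,\omega}=\lim_m\omega(m)^{1/m}$. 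Hence the radii of $\tilde\eta_n$ are $\min_{k\le n}r^{(k)}=r^{(n)}$ and $\max_{k\le n}s^{(k)}=s^{(n)}$.
\item So $\tilde\eta_n$ stays nonadmissible, hence nonconstant.
\end{enumerate}
Hypothesis (ii) is never used.

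The step you expected to need the most care, (v), is immediate. Enlarging $\eta_n$ only weakens (v), since $\nu_n\le K_{\nu_n}\eta_n\le K_{\nu_n}\tilde\eta_n$. In fact (i)--(v), as literally stated, already hold for $\eta_n=\omega_n$ with $K_{\nu_n}=1$.

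Your running maximum gives up only the sharper extremal meaning of the weight in Theorem~\ref{thm:max weight discrete}. In the example, $\tilde\eta_2=\omega_1$ rather than the Theorem~\ref{thm:max weight discrete} weight for $\omega_2$. The corollary's listed properties do not record that extremal meaning. The paper's unmodified choice would keep it, but does not deliver (iv).
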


\begin{corollary} \label{cor:Type-II}
Let $\mathcal{X}=\bigcup_{n\in\mathbb{N}}\ell^{p_n}_{\omega_n}(\mathbb{Z},\mathcal{A})$ be an algebra of Type-II, and let $f\in\mathcal{X}$ be such that $\widehat{f}(z)$ is left invertible (respectively, right invertible, invertible) in $\mathcal{A}$ for all $z\in\mathbb{T}$. Further, assume that 
\begin{enumerate}
    \item \label{(i) further assumption Type-II} $\omega_n$ is not admissible, and $f\in\ell^{p_n}_{\omega_n}(\mathbb{Z},\mathcal{A})$ for all $n\in\mathbb{N}$; and
    \item \label{(ii) further assumption Type-II} either $\{p_n\}_{n\in\mathbb{N}}$ is strictly increasing or $\{\omega_n\}_{n\in\mathbb{N}}$ is such that $\rho_{1,\omega_n}<\rho_{1,\omega_{n+1}}\neq1$ or $1\neq\rho_{2,\omega_{n+1}}<\rho_{2,\omega_n}$ for all $n\in\mathbb{N}$
\end{enumerate}
Then there exists a family of weights $\{\eta_n\}_{n\in\mathbb{N}}$ on $\mathbb{Z}$ such that for all $n\in \mathbb{N}$,
\begin{enumerate}
    \item $\eta_n\leq\omega_n$, 
    \item $\eta_n$ is constant (nonconstant) if $\omega_n$ is constant (nonconstant),
    \item $\eta_n$ is admissible (nonadmissible) if $\omega_n$ is admissible (nonadmissible),
    \item $\eta_{n+1} \leq \eta_n$, and 
    \item if $\{\nu_n\}_{n\in\mathbb{N}}$ is a family of weights on $\mathbb{Z}$ such that $\nu_n \leq \omega_n$ and a left inverse (respectively, right inverse, inverse) of $f$ is in $\bigcup_{n\in\mathbb{N}}\ell^{p_n}_{\nu_n}(\mathbb{Z},\mathcal A)$, then $\nu_n\leq K_{\nu_n} \eta_n$ for some constant $K_{\nu_n}\geq1$ which depends on $\nu_n$ for all $n\in\mathbb{N}$.
\end{enumerate}
\end{corollary}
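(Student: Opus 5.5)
The plan is to apply Theorem~\ref{thm:max weight discrete} at each level $n$ and then check that the resulting family is decreasing. By assumption (i), $f\in\ell^{p_n}_{\omega_n}(\mathbb{Z},\mathcal{A})$ for every $n$, so each pair $(p_n,\omega_n)$ satisfies the hypotheses of Theorem~\ref{thm:max weight discrete}.

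For each $n$, extend $\widehat f$ continuously to $\Gamma_{\omega_n}$ as in the proof of Theorem~\ref{thm:proj limit d=1}. Then set
\begin{align*}
r^{(n)}&=\inf\{r_1\ge\rho_{1,\omega_n}:\widehat f(z) \text{ left invertible on } \Gamma(r_1,1)\},\\
s^{(n)}&=\sup\{r_2\le\rho_{2,\omega_n}:\widehat f(z) \text{ left invertible on } \Gamma(1,r_2)\}.
\end{align*}
Let $\eta_n$ be the weight produced in the proof of Theorem~\ref{thm:max weight discrete} from these quantities. Properties (1), (2) and (3) are then immediate from that theorem.

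For (4), I would argue as follows. Since $\omega_{n+1}\le\omega_n$, \eqref{eq:rho inequality for decreasing weights on Z} gives $\Gamma_{\omega_{n+1}}\subset\Gamma_{\omega_n}$. The set on which the extension of $\widehat f$ is left invertible does not depend on $n$; only the admissible annulus shrinks. Hence $r^{(n+1)}=\max(r^{(n)},\rho_{1,\omega_{n+1}})$ and $s^{(n+1)}=\min(s^{(n)},\rho_{2,\omega_{n+1}})$, so $r^{(n)}\le r^{(n+1)}\le1\le s^{(n+1)}\le s^{(n)}$.

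Next I compare $\eta_{n+1}$ and $\eta_n$ on each half-line.
\begin{itemize}
\item If both are pure exponentials there, the inequality $\eta_{n+1}\le\eta_n$ is immediate.
\item If $\eta_{n+1}=\omega_{n+1}$ on a half-line, then $\omega_{n+1}\le\omega_n$ handles the case $\eta_n=\omega_n$. In the case $\eta_n(m)=s^{(n)m}$, equality $s^{(n+1)}=\rho_{2,\omega_{n+1}}$ must hold. Here I would use $s^{(n+1)}\le s^{(n)}$, but comparing $\omega_{n+1}$ with an exponential is the delicate point.
\end{itemize}
This is where assumption (ii) enters. With strict inequalities $\rho_{1,\omega_n}<\rho_{1,\omega_{n+1}}$ or $\rho_{2,\omega_{n+1}}<\rho_{2,\omega_n}$, I would replace $\eta_{n+1}$ on the problematic half-line by the exponential $m\mapsto(s^{(n+1)})^m$, or $(r^{(n+1)})^m$ on the negative side. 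This remains $\le\omega_{n+1}$ by the definition of $\rho_{2,\omega_{n+1}}$, so the modified family is monotone.

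If instead $\{p_n\}$ is strictly increasing, the same replacement still keeps properties (1)–(3). So in either case I would take all $\eta_n$, from the first index on, to be the two-sided exponentials $\eta_n(m)=(r^{(n)})^m$ for $m\le0$ and $(s^{(n)})^m$ for $m\ge0$. Monotonicity then follows from the chain of inequalities for $r^{(n)}$ and $s^{(n)}$. Nonadmissibility is preserved because assumption (ii) forces at least one of $r^{(n)}<1$ or $s^{(n)}>1$.

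For (5), take $\{\nu_n\}$ with $\nu_n\le\omega_n$ and a left inverse $g$ of $f$ in $\bigcup_n\ell^{p_n}_{\nu_n}$. Then $g\in\ell^{p_N}_{\nu_N}$ for some $N$, and hence in $\ell^{p_n}_{\nu_n}$ for all $n\ge N$ by Lemma~\ref{lem:weighted Fourier algebras inclusions}. For these $n$, Theorem~\ref{thm:max weight discrete}(4) gives $\nu_n\le K_{\nu_n}\eta_n$.

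For $n<N$, I would use the Gelfand-theory argument from that theorem: the extension of $\widehat f$ is left invertible on $\Gamma_{\nu_n}$ whenever $g$ lies in some $\ell^{p_j}_{\nu_j}$ with $\Gamma_{\nu_j}\supset\Gamma_{\nu_n}$. This fails in general, and it is the main obstacle. I expect to need to interpret (5) levelwise, i.e. $g\in\ell^{p_n}_{\nu_n}$ for the relevant $n$, or else obtain the bound from $\nu_n\le\omega_n$ together with $\rho_{i,\nu_n}$ lying between $r^{(n)}$ and $s^{(n)}$. The final step then uses growth-rate comparison on $|m|$ large and Lemma~\ref{lem:weights bounded on compact sets} on the finite remaining set to produce $K_{\nu_n}$.
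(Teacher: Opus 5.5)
The backbone of your plan is what the paper intends. It applies Theorem~\ref{thm:max weight discrete} levelwise and arranges monotonicity as in Theorem~\ref{thm:inductive limit d=1}. However, your repair of (4) is a genuine gap. Replacing every $\eta_n$ by the two-sided exponential $(r^{(n)})^m$ ($m\le0$), $(s^{(n)})^m$ ($m\ge0$) gives (1)--(4), but it destroys (5), even in the levelwise form you end up wanting.

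Theorem~\ref{thm:max weight discrete} keeps $\eta=\omega$ on a half-line exactly when $r=\rho_{1,\omega}$ (resp.\ $s=\rho_{2,\omega}$), because there competitors may be as large as $\omega$ itself. For instance, take $\omega_n(m)=(1+|m|)e^{|m|}$ for all $n$, $\{p_n\}$ strictly increasing, and $f=\delta_0$. Then $r^{(n)}=e^{-1}$ and $s^{(n)}=e$, so your $\eta_n(m)=e^{|m|}$. But $\nu_n=\omega_n$ is a valid competitor, since the inverse $\delta_0$ lies in every $\ell^{p_n}_{\omega_n}(\mathbb{Z},\mathcal{A})$. And $(1+|m|)e^{|m|}\le Ke^{|m|}$ fails for every $K$.

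Two further points on this step:
\begin{itemize}
\item Your argument for (5) quotes Theorem~\ref{thm:max weight discrete}(4), which concerns the unmodified weights, not the exponentials you finally chose.
\item Assumption (ii) justifies nothing here. Nonadmissibility of your $\eta_n$ comes from (i): since $\widehat f$ is left invertible on a neighbourhood of $\mathbb{T}$, $r^{(n)}<1$ iff $\rho_{1,\omega_n}<1$, and similarly for $s^{(n)}$. Per the paper's remark, (ii) only serves to make the spaces $\ell^{p_n}_{\eta_n}(\mathbb{Z},\mathcal{A})$ distinct.
\end{itemize}

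The obstacles you identified are nonetheless real, and the paper's one-line justification (combine Theorem~\ref{thm:inductive limit d=1} with Theorem~\ref{thm:max weight discrete}) does not address them.

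\emph{Monotonicity.} The levelwise maximizers $\tilde\eta_n$ need not decrease. Take $0<b<a<1$ and $f=\delta_0-a\delta_{-1}$, so $\widehat f$ vanishes at $z=a$. On $m\le0$ let $\omega_1(m)=(1+|m|)b^m$ and $\omega_n(m)=(1+|m|)a^m$ for $n\ge2$; on $m\ge0$ let all $\omega_n(m)=(1+|m|)e^m$. Then $\tilde\eta_1(m)=a^m$ on $m\le0$, but $\tilde\eta_2=\omega_2$, so $\tilde\eta_2(-1)=2/a>\tilde\eta_1(-1)$.

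The remedy is to \emph{enlarge} rather than shrink: set $\eta_n:=\sup_{k\ge n}\tilde\eta_k$.
\begin{itemize}
\item It is a weight, being a supremum of weights bounded by $\omega_n$ (since $\tilde\eta_k\le\omega_k\le\omega_n$ for $k\ge n$).
\item It is decreasing in $n$.
\item It dominates $\tilde\eta_n$, so it inherits nonadmissibility and levelwise maximality.
\end{itemize}

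\emph{Property (5) for $n<N$.} You are right here too. Choose $\nu_n=\omega_n$ and $\nu_k\equiv1$ for $k\ne n$. The hypothesis of (5) holds, because by Theorem~\ref{thm1} with the trivial weight, $f$ has a left inverse in $\ell^{p_k}(\mathbb{Z},\mathcal{A})$. So (1) together with (5) as literally written forces $K^{-1}\omega_n\le\eta_n\le\omega_n$. The literal statement is then met trivially by $\eta_n=\omega_n$. The meaningful content, and what the combination argument actually yields, is the levelwise version: if $\nu\le\omega_n$ and $\ell^{p_n}_\nu(\mathbb{Z},\mathcal{A})$ contains a left inverse of $f$, then $\nu\le K_\nu\eta_n$.
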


\begin{remark}
In further assumptions for the above corollaries, the first cannot be dropped as they are necessary for the existence of $\{\eta_n\}_{n\in\mathbb{N}}$ satisfying the required conditions; and the second ensures that all $\ell^{p_n}_{\eta_n}(\mathbb{Z},\mathcal{A})$ are distinct. Together, both ensure that $\bigcap_{n\in\mathbb{N}}\ell^{p_n}_{\eta_n}(\mathbb{Z},\mathcal{A})$ and $\bigcup_{n\in\mathbb{N}}\ell^{p_n}_{\eta_n}(\mathbb{Z},\mathcal{A})$ are indeed the algebras of Type-I and Type-II, respectively, acting like the largest algebra in the corresponding chain of algebras that does not contain an inverse of given $f$.
\end{remark}

\section{Non-quasi analogues and concluding remarks} \label{sec:concluding remarks}
We briefly mention the case of $p>1$ in one dimension. Let $1<p<\infty$, and let $p'$ be its conjugate index, that is, $\frac{1}{p}+\frac{1}{p'}=1$. A weight $\omega$ on $\mathbb{Z}$ is a \emph{$p$-almost monotone algebra weight} \cite{Da} if the following conditions hold:
\begin{enumerate}
\item \label{def:(i) for p-almost algebra weight} $\omega^{-p'}\star\omega^{-p'}\leq\omega^{-p'}$ and $\sum_{n \in \mathbb{Z}} \omega(n)^{-p'}<\infty$.
\item \label{def:(ii) for p-almost algebra weight} If $\rho_{1,\omega}=1$, then there is a positive constant $K$ such that $\omega(n)\leq K\omega(n+k)$ for all $-n,-k \in \mathbb{N}$.
\item \label{def:(iii) for p-almost algebra weight} If $\rho_{2,\omega}=1$, then there is a positive constant $K$ such that $\omega(n)\leq K\omega(n+k)$ for all $n,k \in \mathbb{N}_0$.
\end{enumerate}
The first condition in \eqref{def:(i) for p-almost algebra weight} is required to ensure that $\ell^p_\omega(\mathbb{Z,\mathcal{A}})$ is an algebra (see \cite{Ku}), while the second implies the inclusion $\ell^p_\omega(\mathbb{Z,\mathcal{A}})\subset\ell^1(\mathbb{Z},\mathcal{A})$, which in turn allows the Fourier transform to be defined for $f\in\ell^p(\mathbb{Z,\mathcal{A}})$. And conditions \eqref{def:(ii) for p-almost algebra weight} and \eqref{def:(iii) for p-almost algebra weight} are required for the construction of $p$-almost monotone algebra weight $\nu$ in the following theorem and hence in the consecutive theorems.

\begin{theorem} \cite[Theorem 2]{kb} \label{thm2}
Let $1<p<\infty$, $\omega$ be a $p$-almost monotone algebra weight on $\mathbb Z$, $\mathcal A$ be a unital Banach algebra, and let $f \in \ell^p_\omega(\mathbb{Z},\mathcal A)$. If $\widehat{f}(z)$ is left invertible (respectively, right invertible, invertible) in $\mathcal A$ for all $z \in \mathbb{T}$, then there exist a $p$-almost monotone algebra weight $\nu$ on $\mathbb{Z}$ such that $\nu \leq \omega$, $\nu$ is admissible if and only if $\omega$ is admissible and a left inverse (respectively, right inverse, inverse) of $f$ is in $\ell^p_\nu(\mathbb{Z},\mathcal A)$. In particular, if $\omega$ is an admissible weight, then a left inverse (respectively, right inverse, inverse) of $f$ is in $\ell^p_\omega(\mathbb{Z},\mathcal A)$.
\end{theorem}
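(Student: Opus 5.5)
We follow the Gel'fand-theoretic strategy of the $p\le 1$ case (Theorem~\ref{thm1}), but the $\ell^p$-summability is now handled through Hölder's inequality instead of \eqref{eq:ineqp<1}.

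\emph{Step 1: holomorphic extension of $\widehat f$.} For $z\in\Gamma_\omega=\Gamma(\rho_{1,\omega},\rho_{2,\omega})$ we have $|z|^n\le\omega(n)$ for all $n\in\mathbb{Z}$. Hence
\[
\sum_{n}\|f(n)\||z|^n\le \|f\|_{\ell^p_\omega}\Bigl(\sum_n \omega(n)^{-p'}\Bigr)^{1/p'}<\infty,
\]
using condition \eqref{def:(i) for p-almost algebra weight}. So $F(z)=\sum_n f(n)z^n$ converges absolutely on $\Gamma_\omega$, is continuous there, and agrees with $\widehat f$ on $\mathbb{T}$. The set of left invertible elements of $\mathcal{A}$ is open and $\mathbb{T}$ is compact. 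Therefore, exactly as in the proof of Theorem~\ref{thm:proj limit d=1}, there are $\rho_{1,\omega}\le r\le 1\le s\le\rho_{2,\omega}$ such that $F(z)$ is left invertible on $\Gamma(r,s)$. We take $r<1$ whenever $\rho_{1,\omega}<1$, and $s>1$ whenever $\rho_{2,\omega}>1$.

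\emph{Step 2: construct $\nu$.} The natural choice is to keep $\omega$ on each side where it is already admissible and to put the exponential piece on each nonadmissible side:
\[
\nu(n)=
\begin{cases}
r^n \text{ or } \omega(n), & n\le 0,\\
s^n \text{ or } \omega(n), & n\ge 0.
\end{cases}
\]
On a side where the corresponding $\rho_{i,\omega}=1$ we use $\omega$; on a side where $\rho_{i,\omega}\ne1$ we use $r^n$ or $s^n$.

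Pure exponentials are not $p$-almost monotone algebra weights, since $\sum_n\nu(n)^{-p'}$ diverges on a side where $\nu$ is constant, and $\nu^{-p'}\star\nu^{-p'}\le\nu^{-p'}$ fails for pure exponentials. So the exponential part has to be corrected. We set instead
\[
\nu(n)=\min\{\omega(n),\,c\,s^n(1+|n|)^{\gamma}\},
\]
and similarly on the negative side, with $\gamma>1/p'$ and $c$ chosen so that $\nu$ is submultiplicative and satisfies $\nu^{-p'}\star\nu^{-p'}\le\nu^{-p'}$. The polynomial factor $(1+|n|)^{\gamma}$ is the standard example of a weight satisfying the $\ell^p$-algebra condition, and the min with $\omega$ keeps $\nu\le\omega$. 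Next we verify conditions \eqref{def:(ii) for p-almost algebra weight}–\eqref{def:(iii) for p-almost algebra weight}. On an admissible side they are inherited from $\omega$. On a nonadmissible side $\rho_{i,\nu}\ne1$, so those conditions are vacuous. We also check that $\Gamma_\nu=\Gamma(r,s)$ and that $\nu$ is admissible iff $\omega$ is, which holds because $s^{n}(1+|n|)^\gamma$ has $n$-th root tending to $s$.

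\emph{Step 3: invertibility.} For $1<p<\infty$ and a $p$-almost monotone algebra weight $\nu$, the Gel'fand space of $\ell^p_\nu(\mathbb{Z})$ is identified with $\Gamma_\nu$ via $z\mapsto(g\mapsto \widehat g(z))$, as in \cite{Da}. The vector-valued Bochner–Phillips type argument then gives the conclusion: if $\widehat f(z)$ is left invertible for all $z\in\Gamma_\nu$, then $f$ is left invertible in $\ell^p_\nu(\mathbb{Z},\mathcal{A})$. Concretely, we apply the Bochner–Phillips theorem in the commutative-coefficient form (left ideals, maximal left ideals), as done for $p\le1$ in Theorem~\ref{thm1}. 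The admissible case $\Gamma_\nu=\mathbb{T}$ yields the particular statement.

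\emph{Main obstacle.} The main difficulty is Step 2: producing $\nu\le\omega$ that simultaneously satisfies $\nu^{-p'}\star\nu^{-p'}\le\nu^{-p'}$, the almost-monotonicity conditions, and $\Gamma_\nu=\Gamma(r,s)$. Taking a minimum of two weights can destroy the convolution inequality. If the min construction fails, we would instead use $\nu(n)=\omega(n)\cdot(s/\rho_{2,\omega})^{n}$ for $n\ge0$, and the analogue for $n\le 0$. This rescaling preserves the convolution inequality from $\omega$ up to checking the mixed-sign terms, and it shifts $\Gamma_\omega$ down to $\Gamma(r,s)$.
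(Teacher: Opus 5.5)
Your outline is the right one: extend $\widehat f$ to an annulus, build a weight $\nu$ whose annulus $\Gamma_\nu$ lies where $\widehat f$ is left invertible, then identify the Gel'fand space of $\ell^p_\nu(\mathbb{Z})$ with $\Gamma_\nu$ and run a Bochner--Phillips argument. The gap is Step~2, which is the whole new content for $p>1$. You do not carry it out, and neither of your candidates works as stated.

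\emph{The minimum.} A minimum of two weights need not be submultiplicative. Put $m(n)=c\,s^n(1+|n|)^\gamma$. If $\nu(x)=\omega(x)<m(x)$ and $\nu(y)=m(y)<\omega(y)$, you need $\nu(x+y)\le\omega(x)m(y)$. This follows neither from submultiplicativity of $\omega$ nor from that of $m$. On a nonadmissible side $\omega$ is only assumed submultiplicative, since conditions (ii)--(iii) concern only sides with $\rho_{i,\omega}=1$. So $\nu$ can genuinely alternate between $\omega$ and $m$, and enlarging $c$ only lengthens the stretch where $\nu=\omega$. Likewise $\nu^{-p'}=\max\{\omega^{-p'},m^{-p'}\}$ produces cross terms $\omega^{-p'}\star m^{-p'}$ that must be dominated by $\nu^{-p'}$, and you give no argument for this.

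\emph{The fallback.} The rescaling $\nu(n)=\omega(n)(s/\rho_{2,\omega})^n$ for $n\ge0$ breaks at exactly the mixed-sign check you postpone. For $x<0<y$ with $x+y\ge0$, submultiplicativity requires $\omega(x+y)(\rho_{2,\omega}/s)^{|x|}\le\nu(x)\,\omega(y)$. The extra factor grows exponentially in $|x|$, and $\omega(x+y)\le\omega(x)\omega(y)$ does not supply it, for instance when $\rho_{1,\omega}=1$ and $\nu(x)$ is subexponential. The same factor $(\rho_{2,\omega}/s)^{|k|p'}$ enters the mixed-sign terms of the convolution inequality.

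\emph{What the paper's construction does instead.} The construction the paper relies on is reproduced in the proof of Theorem~\ref{thm:p>1, proj limit d=1}. It regularizes $\omega$ on an admissible side and discards it on a nonadmissible one.
\begin{itemize}
\item If $\rho_{1,\omega}=1$, it uses the monotone envelope $\widetilde\omega(m)=\max\{\omega(l):m\le l\le -1\}$. By condition~(ii) and \cite[Lemma 2]{Da}, $\omega\le\widetilde\omega\le K\omega$. Because $\widetilde\omega$ is monotone, it glues submultiplicatively with the other half.
\item If $\rho_{2,\omega}>1$, it uses the explicit weight $s^{m/2}e^{|m|^\gamma/2}$ with $0<\gamma<1$. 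The subexponential factor removes the $(n+1)$ loss you noticed for pure exponentials. This weight is at most $\sqrt{K}\,\omega(m)$, being the geometric mean of $s^m\le\omega(m)$ and $e^{|m|^\gamma}\le K\omega(m)$; the latter holds because $\omega$ grows exponentially on that side. Its annulus $\Gamma(r^{1/2},s^{1/2})$ lies inside $\Gamma(r,s)$.
\end{itemize}
This only yields $\nu\le K\omega$, not $\nu\le\omega$, and that is how the bound is recorded in Theorem~\ref{thm:p>1, proj limit d=1}. Insisting on $\nu\le\omega$ exactly is what pushed you into the minimum.

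\emph{A smaller error in Step~1.} Your estimate is wrong for $|z|\ne1$. H\"older gives the factor $\bigl(\sum_n|z|^{np'}\omega(n)^{-p'}\bigr)^{1/p'}$, not $\bigl(\sum_n\omega(n)^{-p'}\bigr)^{1/p'}$. The fix is easy: take $r>\rho_{1,\omega}$ and $s<\rho_{2,\omega}$ whenever these differ from $1$. Then $|z|^n/\omega(n)$ decays exponentially on that side.
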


Using this result, together with the techniques used in Theorem~\ref{thm:proj limit d=1} and Theorem~\ref{thm:inductive limit d=1}, we obtain the following two theorems. Since constant weights, particularly the trivial weight $\omega\equiv1$, are not $p$-almost monotone algebra weights for any $p>1$, we impose an additional hypothesis. We provide only the constructions of the weights, as the remaining details can be verified easily. 

\begin{theorem} \label{thm:p>1, proj limit d=1}
Let $\displaystyle \mathcal{X}=\bigcap_{n\in\mathbb{N}}\ell^{p_n}_{\omega_n}(\mathbb{Z},\mathcal{A})$ be an algebra of Type-I, where $\{p_n\}_{n\in\mathbb{N}}$ is a decreasing sequence in $(1,\infty)$ and $\omega_n$ is a $p_n$-almost monotone algebra weight for all $n\in\mathbb{N}$. In addition, assume that all $\omega_n$ are admissible or none of them are. If $f\in\mathcal{X}$ is such that $\widehat{f}(z)$ is left invertible (respectively, right invertible, invertible) in $\mathcal{A}$ for all $z\in\mathbb{T}$, then there exists a family of weights $\{\nu_n\}_{n\in\mathbb{N}}$ on $\mathbb{Z}$ such that  for all $n\in \mathbb N$,
\begin{enumerate}
    \item $\nu_n\leq K_n \omega_n$ for some $K_n>1$,
    \item $\nu_n$ is $p_n$-almost monotone algebra weight,
    \item $\nu_n$ is admissible (nonadmissible) if $\omega_n$ is constant (nonadmissible),
    \item $\nu_n\leq\nu_{n+1}$, and 
    \item a left inverse of (respectively, right inverse, inverse) $f$ is in $\bigcap_{n\in\mathbb{N}}\ell^{p_n}_{\nu_n}(\mathbb{Z},\mathcal{A})$, which is an algebra of Type-I.
\end{enumerate} 
Moreover, $\mathcal{X}$ is inverse-closed if and only if $\omega_n$ is an admissible $p_n$-almost monotone algebra weight for all $n\in\mathbb{N}$.
\end{theorem}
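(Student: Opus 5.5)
We follow the scheme of Theorem~\ref{thm:proj limit d=1}, replacing Theorem~\ref{thm1} by Theorem~\ref{thm2}. We split into two cases according to the extra hypothesis.

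\textbf{All $\omega_n$ admissible.} By Theorem~\ref{thm2}, applied with $p=p_n$, a left inverse (respectively, right inverse, inverse) of $f$ lies in $\ell^{p_n}_{\omega_n}(\mathbb{Z},\mathcal{A})$ for every $n$. We set $\nu_n=\omega_n$, and (i)--(v) are immediate. We still need to know that these one-sided inverses can be taken to be the same element for all $n$. For invertible $\widehat f$ this holds because inverses in $\ell^1(\mathbb{Z},\mathcal{A})$ are unique and every $\ell^{p_n}_{\omega_n}(\mathbb{Z},\mathcal{A})$ embeds in $\ell^1(\mathbb{Z},\mathcal{A})$. For one-sided inverses we argue as in the proof of Theorem~\ref{thm:proj limit d=1}: the inverse produced by the Gel'fand-type argument is built from $\widehat f$ on a common annulus, and is therefore the same for all $n$. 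The ``moreover'' part follows from this case together with Theorem~\ref{thm2}, whose construction keeps $\nu\neq\omega$ (and not inverse-closed) whenever $\omega$ is nonadmissible.

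\textbf{No $\omega_n$ admissible.} We first extend $\widehat f$ continuously to $\Gamma(\rho_{1,\omega_n},\rho_{2,\omega_n})$. This uses the condition $\sum_n\omega(n)^{-p'}<\infty$ and H\"older's inequality, which together give $\sum_n\|f(n)\||z|^n<\infty$ there.
\begin{enumerate}
\item By openness of the left invertible elements and compactness of $\mathbb{T}$, we choose $\rho_{1,\omega_1}\le r_1\le1\le s_1\le\rho_{2,\omega_1}$ with $\widehat f$ left invertible on $\Gamma(r_1,s_1)$, and at least one of $r_1,s_1$ different from $1$.
\item Using \eqref{eq:rho inequality for increasing weights on Z}, we choose inductively $r_{n+1}\le r_n$ and $s_{n+1}\ge s_n$ within $[\rho_{1,\omega_{n+1}},\rho_{2,\omega_{n+1}}]$, exactly as in Theorem~\ref{thm:proj limit d=1}.
\end{enumerate}

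The pure exponential weight $r_n^m, s_n^m$ is not $p_n$-almost monotone, since the sum $\sum\nu^{-p'}$ diverges on a side where $r_n=1$ or $s_n=1$. So we instead take
\[
\nu_n=\max(\tau_n,\ \omega_1),
\]
where $\tau_n$ denotes the two-sided exponential weight with parameters $r_n,s_n$, possibly multiplied by the polynomially growing part already present in $\omega_1$. Alternatively, we could use the weight that Theorem~\ref{thm2} produces from $\omega_n$ and the annulus $\Gamma(r_n,s_n)$; its construction in \cite{kb} takes $\omega_n$ on the side where $\rho=1$ and the exponential on the other side. We modify that construction by replacing $\omega_n$ with $\omega_1$ on the admissible side. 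This makes monotonicity $\nu_n\le\nu_{n+1}$ automatic, since $\omega_1\le\omega_n$ and the exponential parameters are nested. It is also why only $\nu_n\le K_n\omega_n$, rather than $\nu_n\le\omega_n$, is claimed in (i).

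\textbf{Main obstacle.} The hard step is verifying that the modified $\nu_n$ is a genuine $p_n$-almost monotone algebra weight, in particular the convolution inequality $\nu_n^{-p_n'}\star\nu_n^{-p_n'}\le\nu_n^{-p_n'}$. Here $\omega_1$ is only known to satisfy the inequality with exponent $p_1'$, and we have $p_n\le p_1$, so $p_n'\ge p_1'$. I would first check that the inequality for exponent $p_1'$ implies it for every larger exponent. Failing that, I would use Theorem~\ref{thm2}'s own $\nu$ built from $\omega_n$, and restore monotonicity by passing to $\max_{j\le n}$ of these weights. This requires showing that a maximum of finitely many almost monotone algebra weights remains one, which holds up to a constant; that constant is absorbed into $K_n$. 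Finally, Theorem~\ref{thm2} applied to each $\ell^{p_n}_{\nu_n}$ gives the common one-sided inverse in the intersection, which is of Type-I, giving (v).
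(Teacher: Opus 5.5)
\paragraph{The maximum with $\omega_1$ breaks (v).} Your nonadmissible case has a genuine gap: the weight $\nu_n=\max(\tau_n,\omega_1)$ cannot give (v), however $\tau_n$ is chosen.

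\begin{itemize}
\item Since $\nu_n\ge\omega_1$ and $p_n\le p_1$, you have $\ell^{p_n}_{\nu_n}(\mathbb{Z},\mathcal{A})\subset\ell^{p_1}_{\omega_1}(\mathbb{Z},\mathcal{A})$.
\item For $\rho_{1,\omega_1}<|z|<\rho_{2,\omega_1}$, the sequence $|z|^m/\omega_1(m)$ decays geometrically in both directions. By H\"older, $g\mapsto\sum_m g(m)z^m$ is therefore a well-defined multiplicative map on $\ell^{p_1}_{\omega_1}(\mathbb{Z},\mathcal{A})$.
\item So a left inverse of $f$ in $\ell^{p_n}_{\nu_n}$ would force $\widehat f(z)$ to be left invertible on the whole open annulus of $\omega_1$. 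That is exactly what is not assumed.
\end{itemize}

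For example, take $\mathcal{A}=\mathbb{C}$, $\rho_{2,\omega_1}>1$, and $f=\lambda\delta_0-\delta_1$ with $1<\lambda<\rho_{2,\omega_1}$. Then $f\in\mathcal{X}$ and $\widehat f(z)=\lambda-z\neq0$ on $\mathbb{T}$, but $\widehat f(\lambda)=0$. So on every side where $\omega_n$ is nonadmissible, $\nu_n$ must have exponential rate within $[r_n,s_n]$.

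\paragraph{The exponential side needs a subexponential factor.} Your alternative uses $\omega_1$ only on a side where $\rho_{\cdot,\omega_n}=1$. That is safe, because $\omega_1$ is then admissible on that side too. On the other side, however, the pure exponential fails for a reason your diagnosis misses. It is not only that $\sum\nu^{-p'}$ may diverge. The convolution inequality fails even when $r_n<1<s_n$:
\[
\sum_{0\le k\le m}s_n^{-p_n'k}\,s_n^{-p_n'(m-k)}=(m+1)\,s_n^{-p_n'm}.
\]
The same computation breaks $\max(\tau_n,\omega_1)$ with exponential $\tau_n$ as soon as $s_n>\rho_{2,\omega_1}$, since then $\nu_n(m)=s_n^m$ for all large $m$. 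Also, a ``polynomial part of $\omega_1$'' is not defined for a general weight.

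The ingredient you are missing is the one the paper uses:
\begin{itemize}
\item On a nonadmissible side it takes $s_k^{m/2}e^{|m|^\gamma/2}$. This is the geometric mean of $s_k^m\le\omega_k(m)$ and $e^{|m|^\gamma}\le K_{k,2}\,\omega_k(m)$.
\item On a side with $\rho=1$ it takes the monotone majorant $\widetilde{\omega_k}\le K_{k,1}\omega_k$ from \cite[Lemma 2]{Da}.
\end{itemize}
These two estimates are what produce the constant $K_n$ in (i), not a replacement of $\omega_n$ by $\omega_1$.

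\paragraph{Your fallback works.} It is also easier than you feared. Let $\nu^{(j)}$ be the weights built from $\omega_j$ and the nested annuli $\Gamma(r_j,s_j)$, and set $W_n=\max_{j\le n}\nu^{(j)}$.
\begin{itemize}
\item The convolution inequality is preserved exactly, not just up to a constant:
\[
\min(a,b)\star\min(a,b)\le\min(a\star a,\,b\star b)\le\min(a,b).
\]
\item Passing from exponent $p_j'$ to $p_n'\ge p_j'$ costs nothing, because $\sum_k c_k^t\le(\sum_k c_k)^t$ for $t\ge1$.
\item Since $\rho_{2,W_n}=\max_{j\le n}\rho_{2,\nu^{(j)}}\le s_n$ and $\rho_{1,W_n}\ge r_n$, the Gel'fand annulus of $W_n$ stays inside $\Gamma(r_n,s_n)$.
\item $W_n\le W_{n+1}$ holds for free. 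This includes the case where a side switches from admissible to nonadmissible as $n$ grows, which a direct nested choice has to handle separately.
\end{itemize}
Make this your construction.

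\paragraph{Common one-sided inverse.} Your claim that the one-sided inverse is ``the same for all $n$'' is not supported by anything in the proof of Theorem~\ref{thm:proj limit d=1}. Bochner--Phillips-type left inverses are not canonical. The paper also takes the passage to the intersection for granted, so record this as an unproved step rather than citing it.
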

\begin{proof}
Fix $\gamma \in (0,1)$. Let $k\in\mathbb{N}$. Then, as in Theorem~\ref{thm:inductive limit d=1}, we may choose positive reals $r_k,s_k$ such that $\rho_{1,\omega_k} \leq r_k \leq 1 \leq s_k \leq \rho_{2,\omega_k}$ and $\widehat{f}(z)$ is left invertible in $\mathcal{A}$ for all $z \in \Gamma(r_k,s_k)$.

We consider the following three cases. 

\textit{Case-1:} If $\rho_{1,\omega_k}=1$ and $\rho_{2,\omega_k}>1$, then, by \cite[Lemma 2]{Da}, there is $K_{k,1}>1$ such that $\omega_k(m)\leq \widetilde{\omega_k}(m) \leq K_{k,1}\omega_k(m)$ for all $m<0$, where $\widetilde{\omega_k}(m)=\max\{\omega_k(l):m \leq l \leq -1\}$ for all $m<0$. Also, there exists $K_{k,2}>1$ such that $e^{|m|^\gamma} \leq K_{k,2}\omega_k(m)$ for all $n \in \mathbb{N}_0$. Set $K_k=\max\{K_{k,1},K_{k,2}\}$ and define 
\begin{align*}
    \nu_k(m)=
    \begin{cases} 
    \widetilde{\omega_k}(m), & m<0, \\ 
    s_k^\frac{m}{2} e^\frac{|m|^\gamma}{2}, & m\geq 0.
    \end{cases}
\end{align*}

\textit{Case-2:} Let $\rho_{1,\omega_k}<1$ and $\rho_{2,\omega_k}=1$. If $\widetilde{\omega_k}(m)=\max\{\omega_k(l):0\leq l \leq n\}$ for all $m\in \mathbb{N}_0$, then, again by \cite[Lemma 2]{Da}, there is $K_{k,1}>0$ such that $\omega_k(m)\leq \widetilde{\omega_k}(m) \leq M_{k,1}\omega_k(m)$ for all $m\in \mathbb{N}_0$. Also, there is a constant $K_{k,2}>0$ such that $e^{|m|^\gamma} \leq K_{k,2}\omega_k(m)$ for all $m<0$. Let $K_k=\max\{K_{k,1},K_{k,2}\}$ and define 
\begin{align*}
    \nu_k(m)=
    \begin{cases} 
    r_k^\frac{m}{2} e^\frac{|m|^\gamma}{2}, & m<0, \\ 
    \widetilde{\omega_k}(m), & m\geq 0.
    \end{cases}
\end{align*}

\textit{Case-3:} If $\rho_{1,\omega_k}<1<\rho_{2,\omega_k}$, then there is some $K_k>1$ such that $e^{|m|^\gamma} \leq K_k\omega_k(m)$ for all $m\in\mathbb{Z}$. Define 
\begin{align*}
    \nu_k(m)=
    \begin{cases} 
    r_k^\frac{m}{2} e^\frac{|m|^\gamma}{2}, & m<0, \\ 
    s_k^\frac{m}{2} e^\frac{|m|^\gamma}{2}, & m\geq 0.
    \end{cases}
\end{align*}
Then, in any case, $\nu_k$ is a $p_k$-almost monotone algebra weight on $\mathbb{Z}$ such that $\nu_k \leq K_k\omega_k$ for some $K_k>0$. By hypothesis all $\omega_n$ are not admissible and so, using techniques of Theorem~\ref{thm:proj limit d=1}, we may choose $r_n,s_n$ such that $r_{n+1}\leq r_n$ and $s_n\leq s_{n+1}$ for all $n\in\mathbb{N}$. Also, $\omega_n \leq \omega_{n+1}$ implies that $\widetilde{\omega_n} \leq \widetilde{\omega}_{n+1}$ for all $n\in\mathbb{N}$. Thus, $\nu_n\leq\nu_{n+1}$ for all $n\in\mathbb{N}$. The theorem follows from Theorem~\ref{thm2} as these weights are constructed in a similar manner.
\end{proof}

\begin{theorem} \label{thm:p>1 inductive limit d=1}
Let $\displaystyle \mathcal{X}=\bigcup_{n\in\mathbb{N}}\ell^{p_n}_{\omega_n}(\mathbb{Z},\mathcal{A})$ be an algebra of Type-II, where $\{p_n\}_{n\in\mathbb{N}}$ is an increasing sequence in $(1,\infty)$ and $\omega_n$ is a $p_n$-almost monotone algebra weight for all $n\in\mathbb{N}$. In addition, assume that all $\omega_n$ are admissible or none of them are. If $f\in\mathcal{X}$ is such that $\widehat{f}(z)$ is left invertible (respectively, right invertible, invertible) in $\mathcal{A}$ for all $z\in\mathbb{T}$, then there exists a family of weights $\{\nu_n\}_{n\in\mathbb{N}}$ on $\mathbb{Z}$ such that for all $n\in \mathbb{N}$,
\begin{enumerate}
    \item $\nu_n\leq\omega_n$, 
    \item $\nu_n$ is $p_n$-almost monotone algebra weight,
    \item $\nu_n$ is admissible (nonadmissible) if $\omega_n$ is constant (nonadmissible),
    \item $\nu_{n+1} \leq \nu_n$, and 
    \item a left inverse (respectively, right inverse, inverse) of $f$ is in $\bigcup_{n\in\mathbb{N}}\ell^{p_n}_{\nu_n}(\mathbb{Z},\mathcal{A})$, which is an algebra of Type-II.
\end{enumerate}
Moreover, $\mathcal{X}$ inverse-closed if and only if the family $\{\omega_n\}_{n\in \mathbb{N}}$ of $p_n$-almost monotone algebra weight satisfies the extended GRS condition \eqref{def:extended GRS condition}.
\end{theorem}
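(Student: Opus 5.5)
We follow the scheme of the proof of Theorem~\ref{thm:inductive limit d=1}, with the weight constructions of Theorem~\ref{thm:p>1, proj limit d=1} in place of the pure exponential weights. Let $k$ be the smallest integer with $f\in\ell^{p_k}_{\omega_k}(\mathbb{Z},\mathcal{A})$. We first need $f\in\ell^{p_n}_{\omega_n}(\mathbb{Z},\mathcal{A})$ for all $n\geq k$. Lemma~\ref{lem:weighted Fourier algebras inclusions} covers only $p\leq1$, so for $p_n>1$ we must check directly that $\ell^{p_k}_{\omega_k}\subset\ell^{p_n}_{\omega_n}$ when $\omega_n\leq\omega_k$ and $p_k\leq p_n$. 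Weight monotonicity is immediate. For the exponent, note that $\|f(m)\|\omega_k(m)\to0$, so all but finitely many terms are at most $1$, and raising such terms to the power $p_n\geq p_k$ only decreases them.

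If all $\omega_n$ are admissible, set $\nu_n=\omega_n$. Theorem~\ref{thm2} then gives a left inverse of $f$ in $\ell^{p_k}_{\omega_k}(\mathbb{Z},\mathcal{A})\subset\mathcal{X}$. Otherwise none of the $\omega_n$ is admissible, and we construct the $\nu_n$ inductively.

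\textbf{Choice of radii.} Fix $\gamma\in(0,1)$. By \eqref{eq:rho inequality for decreasing weights on Z}, the annuli $\Gamma_{\omega_n}$ shrink as $n$ grows. Using continuity of the extension of $\widehat f$ on $\Gamma_{\omega_k}$ and openness of the set of left invertibles, as in Theorem~\ref{thm:proj limit d=1}, we choose radii with
\[
\rho_{1,\omega_n}\leq r_n\leq r_{n+1}\leq1\leq s_{n+1}\leq s_n\leq\rho_{2,\omega_n}\qquad(n\geq k),
\]
such that $\widehat f$ is left invertible on $\Gamma(r_k,s_k)$, and hence on every smaller annulus.

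\textbf{Construction of the weights.} For each $n\geq k$ we define $\nu_n$ by the three cases of Theorem~\ref{thm:p>1, proj limit d=1}: the factors $r_n^{m/2}e^{|m|^\gamma/2}$ and $s_n^{m/2}e^{|m|^\gamma/2}$ on the nonadmissible sides, and the monotone envelope $\widetilde{\omega_n}$ from \cite[Lemma 2]{Da} on a side where $\rho_{i,\omega_n}=1$. For $n<k$ we set $\nu_n=\omega_n$ (or a suitably large admissible envelope), as in Theorem~\ref{thm:inductive limit d=1}. Monotonicity $\nu_{n+1}\leq\nu_n$ then follows from three facts: the radii move monotonically, $\omega_{n+1}\leq\omega_n$ gives $\widetilde{\omega}_{n+1}\leq\widetilde{\omega_n}$, and the subexponential factor is common to all $n$.

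\textbf{Inverse and inverse-closedness.} Theorem~\ref{thm2} applied to $\nu_k$ puts a left inverse of $f$ in $\ell^{p_k}_{\nu_k}(\mathbb{Z},\mathcal{A})$, hence in the Type-II algebra $\bigcup_n\ell^{p_n}_{\nu_n}(\mathbb{Z},\mathcal{A})$. For the ``moreover'' part we argue as in Theorem~\ref{thm:Fageot et. al. Type-II}, using Lemma~\ref{lem:l^p(X,C) subset l^p(X,A)}. If the extended GRS condition holds, then $\inf_n\rho_{2,\omega_n}=1=\sup_n\rho_{1,\omega_n}$. For the given $f$, the Fourier transform extends holomorphically and left invertibly to an annulus of positive width around $\mathbb{T}$, which contains $\Gamma_{\omega_N}$ for large $N$; from this one shows that the inverse lies in $\ell^{p_N}_{\omega_N}$. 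Conversely, if the extended GRS condition fails, then $\bigcap_n\Gamma_{\omega_n}$ contains a circle $|z|=\rho\neq1$, and the element $\delta_0-\rho^{-1}\delta_1$ (or its mirror image) is invertible in $\ell^1$ but not in $\mathcal{X}$.

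\textbf{Main obstacle.} We expect the hardest step to be the claim, in the $p>1$ setting, that the constructed $\nu_n$ satisfy both $\nu_n\leq\omega_n$ and $\nu_{n+1}\leq\nu_n$ simultaneously. The envelope $\widetilde{\omega_n}$ and the subexponential factor are dominated by $\omega_n$ only up to a constant $K_n$, so these constants have to be absorbed. Our first attempt is to check the inequality $\nu_n\leq\omega_n$ only up to constants, which suffices because $\ell^{p}_{c\omega}=\ell^{p}_{\omega}$ as sets. If that fails, we would instead replace $e^{|m|^\gamma/2}$ by $e^{c_n|m|^\gamma}$ with constants $c_n$ decreasing fast enough.
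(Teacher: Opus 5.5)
Your route is the one the paper intends. The paper gives no separate proof of this theorem; it only points to the weight construction in Theorem~\ref{thm:p>1, proj limit d=1}, glued as in Theorem~\ref{thm:inductive limit d=1}. Several of your steps are fine: the inclusion $\ell^{p_k}_{\omega_k}\subset\ell^{p_n}_{\omega_n}$ for exponents above $1$, the monotone choice of radii, and the example $\delta_0-\rho^{-1}\delta_1$ for the ``only if'' half of the last assertion. However, the obstacle you flag at the end is a genuine gap, and your remedies do not close it.

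\textbf{Item (i).} On a side where $\rho_{i,\omega_n}=1$ you use the envelope $\widetilde{\omega_n}$, which satisfies $\omega_n\le\widetilde{\omega_n}\le K\omega_n$. So $\nu_n>\omega_n$ wherever $\omega_n$ fails to be monotone, and (i) fails there. Shrinking the constant in $e^{c_n|m|^\gamma}$ does not touch that side.

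\textbf{Constants cannot be absorbed.} Weights must be $\ge1$, and the normalization $\nu^{-p'}\star\nu^{-p'}\le\nu^{-p'}$ is not invariant under $\nu\mapsto\nu/K$; it survives only with an extra factor $K^{p'}$. Worse, evaluating it at $0$ gives $\sum_{j\in\mathbb{Z}}\nu(j)^{-p'}\nu(-j)^{-p'}\le\nu(0)^{-p'}$, which is impossible when $\nu(0)=1$. In the first and third cases of the construction $\nu_n(0)=1$, so $\nu_n$ is not a $p_n$-almost monotone algebra weight until multiplied by a constant $>1$. That constant competes directly with $\nu_n\le\omega_n$ near $m=0$. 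Checking (i) ``up to constants'' therefore gives only $\nu_n\le K_n\omega_n$, which is what the paper states in the projective analogue. That is enough to place a left inverse of $f$ in $\bigcup_n\ell^{p_n}_{\nu_n}(\mathbb{Z},\mathcal{A})$, but it does not prove items (i) and (ii) as written.

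\textbf{Item (iv).} Your three facts only compare two weights built by the \emph{same} case. In a Type-II family, $\rho_{1,\omega_n}$ increases and $\rho_{2,\omega_n}$ decreases. Either can become exactly $1$ at a finite index $N$ while $\omega_N$ is still nonadmissible on the other side. On that side you must then compare $\nu_N=\widetilde{\omega_N}$ with $\nu_{N-1}(m)=r_{N-1}^{m/2}e^{|m|^\gamma/2}$, and these are comparable only for large $|m|$.

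For example, take $\omega_{N-1}(m)=C(1+|m|)^a\rho^{m}$ and $\omega_N(m)=C(1+|m|)^a$ for $m<0$. Let both equal $C(1+m)^a\sigma^m$ with $\sigma>1$ for $m\ge0$, with $a,C$ large and $\rho<1$ close to $1$. Then $r_{N-1}\ge\rho$ gives $\nu_{N-1}(-1)\le\rho^{-1/2}e^{1/2}<2^aC=\nu_N(-1)$.

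The same problem occurs at $n=k-1$: there $\nu_{k-1}=\omega_{k-1}$ can lie below $\widetilde{\omega_k}$, for instance if $\omega_{k-1}=\omega_k$ is not monotone. Your alternative of an ``admissible envelope'' for $n<k$ would violate (iii), since here every $\omega_n$ is nonadmissible.

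To prove the statement as written you need a genuinely different way of gluing the pieces across indices, and a way of handling the normalization at $m=0$. The paper does not supply this either. What your argument supports is at best a version in which (i) and (iv) hold only up to multiplicative constants, with the $\nu_n$ becoming algebra weights only after rescaling.
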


The following are some final remarks and open problems for future research.

\begin{remark}
The reason for not having maximizing results for the multi-dimensional scenario is again due to the adapted definition of admissible weights as mentioned in Remark~\ref{rem:particular not true for higher dimensions}. Indeed, it is because the proof of the versions of Wiener's theorem in Section~\ref{subsec:known analogues of Wiener's theorem} relies on the Gel'fand theory of Banach algebras and the Gel'fand space of $\ell^p_\omega(\mathbb{Z})$ is identified with $\Gamma(\rho_{1,\omega},\rho_{2,\omega})$, while it is not exactly known for higher dimensions, that is, the Gel'fand space of $\ell^p_\omega(\mathbb{Z}^2)$ may be larger than $\Gamma(\rho_{1,\omega},\rho_{2,\omega}) \times \Gamma(\mu_{1,\omega},\mu_{2,\omega})$, and the same follows for infinite dimensional analogue. The identification of the Gel'fand space of an algebra constitutes an important question in its own right. Dedania and Goswami \cite{DedaniaGoswami2022} worked for the identification of the Gel'fand space of $\ell^1_\omega(\mathbb{Z}^2)$ but it turned out to be more complicated than required. It is known that if $\omega$ is a weight on $\mathbb{Z}^d$ (or $\mathbb{Z}^\mathbb{N}$) that satisfies the GRS condition, then it is $\mathbb{T}^d$ (or $\mathbb{T}^\infty$). This naturally leads to the following open questions, which will eventually complete the study presented here.
\end{remark}

\begin{remark}
For $p>1$, the reason for not having maximizing results is that we need almost monotone algebra weights here and the techniques of Theorem~\ref{thm:max weight discrete} fail as the resulted weights may not be an almost monotone algebra weight. Moreover, the reason for not having the multi dimensional analogues is that the Gel'fand space of $\ell^p_\omega(\mathbb{Z}^d)$ is not known for a weight $\omega$ which is not admissible. This proposes the following question.
\end{remark}

\textbf{Question:} Can the Gel'fand space of $\ell^p_\omega(\mathbb{Z}^2)$ (or $\ell^p_\omega(\mathbb{Z}^\mathbb{N})$) be expressed as a product of two (or infinite) annuli when $\omega$ is a weight on $\mathbb{Z}^2$ (or $\mathbb{Z}^\mathbb{N}$) that does not satisfy the GRS condition? 

If the answer to the above question is negative, one may then pose the following question.

\textbf{Question:} For what class of nonadmissible weights, the Gel'fand space of $\ell^p_\omega(\mathbb{Z}^2)$ (or $\ell^p_\omega(\mathbb{Z}^\mathbb{N})$) be expressed as a product of two (or infinite) annuli?

Concerning the above question, it is known that if the weight under consideration factors as a product of weights in each coordinate, then the desired conclusion follows.

\section*{Statements and Declarations}
\textbf{Acknowledgment.} The first author is grateful to the National Board for Higher Mathematics (NBHM), India, for the research grant (02011/39/2025/NBHM(R. P.)/R$\&$D II/16090). The second author gratefully acknowledges the Post Doctoral Fellowship under the ISIRD project 9--551/2023/IITRPR/10229 IIT Ropar. This work was partially supported by the FIST program of the Department of Science and Technology, Government of India, Reference No. SR/FST/MS--I/2018/22(C).

\textbf{Funding.} No funding was received for conducting this study.

\textbf{Competing Interests.} The authors have no relevant financial or non-financial interests to disclose.

\textbf{Author Contributions.} All authors contributed equally.

\textbf{Availability of data and material.} No data were used for this study.

\textbf{Ethical Approval.} Not applicable.

\bibliography{References}

@article{Bal25,
  author = {K\"{o}hldorfer, Lukas and Balazs, Peter},
  title = {{W}iener Pairs of {B}anach Algebras of Operator-Valued Matrices},
  journal = {J. Math. Anal. Appl.},
  volume = {549},
  number = {2},
  year = {2025},
  pages = {Paper No. 129525},
  issn = {0022-247X},
  doi = {10.1016/j.jmaa.2025.129525},
  url = {https://doi.org/10.1016/j.jmaa.2025.129525},
  mrclass = {47A56 (42C15 46H05)}
}

@article{Ba,
  author = {Baskakov, A. G.},
  title = {Asymptotic Estimates for the Entries of the Matrices of Inverse Operators and Harmonic Analysis},
  journal = {Sib. Math. J.},
  volume = {38},
  number = {1},
  year = {1997},
  pages = {10--22}
}

@article{Badi,
  author = {Baskakov, A. G.},
  title = {Abstract Harmonic Analysis and Asymptotic Estimates of Elements of Inverse Matrices},
  journal = {Mat. Zh.},
  volume = {52},
  number = {2},
  year = {1992},
  pages = {17--26}
}

@article{De,
  author = {Bhatt, S. J. and Dabhi, P. A. and Dedania, H. V.},
  title = {{B}eurling Algebra Analogues of the Theorems of {W}iener--{L}\'evy--\.{Z}elazko and \.{Z}elazko},
  journal = {Studia Math.},
  volume = {195},
  number = {3},
  year = {2009},
  pages = {219--225}
}

@article{Bh,
  author = {Bhatt, S. J. and Dedania, H. V.},
  title = {{B}eurling Algebra Analogues of the Classical Theorems of {W}iener and {L}\'evy on Absolutely Convergent {F}ourier Series},
  journal = {Proc. Indian Acad. Sci. Math. Sci.},
  volume = {113},
  number = {2},
  year = {2003},
  pages = {179--182}
}

@article{Bo,
  author = {Bochner, S. and Phillips, R. S.},
  title = {Absolutely Convergent {F}ourier Expansions for Non-Commutative Normed Rings},
  journal = {Ann. of Math.},
  volume = {43},
  number = {3},
  year = {1942},
  pages = {409--418}
}

@article{Cuccagna,
author = {Cuccagna, Scipio and Tarulli, Mirko},
title = {On Asymptotic Stability of Standing Waves of Discrete {S}chrödinger Equation in $\mathbb{Z}$},
journal = {SIAM Journal on Mathematical Analysis},
volume = {41},
number = {3},
pages = {861-885},
year = {2009},
doi = {10.1137/080732821},
}

@article{Da,
  author = {Dabhi, P. A.},
  title = {On Weighted $\ell^p$-Convergence of {F}ourier Series: A Variant of the Theorems of {W}iener and {L}\'evy},
  journal = {Adv. Oper. Theory},
  volume = {5},
  year = {2020},
  pages = {1832--1838}
}

@article{Da2,
  author = {Dabhi, P. A.},
  title = {On Two Variable {B}eurling Algebra Analogues of Theorems of {W}iener and {L}\'evy on {F}ourier Series},
  journal = {Proc. Amer. Math. Soc.},
  volume = {150},
  number = {3},
  year = {2022},
  pages = {997--1008}
}

@article{kb,
  author = {Dabhi, P. A. and Solanki, K. B.},
  title = {Vector Valued {B}eurling Algebra Analogues of {W}iener's Theorem},
  journal = {Indian J. Pure Appl. Math.},
  volume = {56},
  year = {2025},
  pages = {448--461}
}

@article{kb2,
  author = {Dabhi, P. A. and Solanki, K. B.},
  title = {Multivariate Vector Valued {B}eurling Algebra Analogues of Theorems of {W}iener and \.{Z}elazko},
  journal = {Commun. Korean Math. Soc.},
  volume = {40},
  number = {3},
  year = {2025},
  pages = {687--697}
}

@article{kb3,
  author = {Dabhi, P. A. and Solanki, K. B.},
  title = {{W}iener Type Theorem for Some Operator Algebras and Elements of {B}anach Algebra},
  journal = {J. Math. Sci.},
  volume={290},
  year = {2025},
  pages = {155--170},
  doi = {10.1007/s10958-025-07577-8}
}

@article{DedaniaGoswami2022,
  author  = {Dedania, H. V. and Goswami, V. N.},
  title   = {The {G}el'fand spaces of discrete {B}eurling algebras on $\mathbb{Z}_+^2$ and $\mathbb{Z}^2$},
  journal = {Italian Journal of Pure and Applied Mathematics},
  number  = {47},
  year    = {2022},
  pages   = {1166--1182},
}

@article{Do,
  author = {Domar, Y.},
  title = {Harmonic Analysis Based on Certain Commutative {B}anach Algebras},
  journal = {Acta Math.},
  volume = {96},
  year = {1956},
  pages = {1--66}
}

@article{FeiWeight1979,
  author = {Feichtinger, Hans G.},
  title = {Gewichtsfunktionen auf Lokalkompakten Gruppen},
  journal = {\"{O}sterreich. Akad. Wiss. Math.-Natur. Kl. Sitzungsber. II},
  volume = {188},
  year = {1979},
  pages = {451--471}
}

@article{Fageotcountablelimits,
  author = {Fageot, J. and Unser, M. and Ward, J. P.},
  title = {Beyond {W}iener's Lemma: Nuclear Convolution Algebras and the Inversion of Digital Filters},
  journal = {J. {F}ourier Anal. Appl.},
  volume = {25},
  year = {2019},
  pages = {2037--2063}
}

@article{FernandezGalbisToft2014,
  author  = {Fern{\'a}ndez, C. and Galbis, A. and Toft, J.},
  title   = {Spectral Properties for Matrix Algebras},
  journal = {Journal of Fourier Analysis and Applications},
  volume  = {20},
  number  = {2},
  pages   = {362--383},
  year    = {2014}
}

@article{FernandezGalbisToft2015,
  author  = {Fern{\'a}ndez, C. and Galbis, A. and Toft, J.},
  title   = {Characterizations of {GRS}-Weights, and Consequences in Time-Frequency Analysis},
  journal = {Journal of Pseudo-Differential Operators and Applications},
  volume  = {3},
  number  = {6},
  pages   = {383--390},
  year    = {2015}
}

@book {GRS,
    AUTHOR = {Gel'fand, I. and Ra\v{i}kov, D. and \v{S}hilov, G.},
    TITLE = {Commutative normed rings},
    PUBLISHER = {Chelsea Publishing Co., New York},
    YEAR = {1964},
    PAGES = {306}
}

@book{grafakos2008modern,
  author    = {Loukas Grafakos},
  title     = {Modern {F}ourier Analysis},
  publisher = {Springer},
  year      = {2008}
}

@article{Gro,
  author = {Gr\"{o}chenig, K.},
  title = {Time-Frequency Analysis of {S}j\"{o}strand's Class},
  journal = {Rev. Mat. Iberoam.},
  volume = {22},
  number = {2},
  year = {2006},
  pages = {703--724}
}

@article{Gr,
  author = {Gr\"{o}chenig, K. and Klotz, A.},
  title = {Noncommutative Approximation: Inverse Closed Subalgebras and Off Diagonal Decay of Matrices},
  journal = {Constr. Approx.},
  volume = {32},
  number = {2},
  year = {2010},
  pages = {429--466}
}

@incollection{GroWeight2007,
  author = {Gr\"{o}chenig, K.},
  title = {Weight Functions in Time-Frequency Analysis},
  booktitle = {Pseudo-Differential Operators: Partial Differential Equations and Time-Frequency Analysis},
  series = {Fields Inst. Commun.},
  volume = {52},
  pages = {343--366},
  publisher = {Amer. Math. Soc., Providence, RI},
  year = {2007},
  mrclass = {42C15 (35S05 46B15 47B37 47G30)},
  url = {https://doi.org/10.1090/fic/052/16}
}

@incollection{Groadmissible,
  author = {Gr\"{o}chenig, K.},
  title = {{W}iener's Lemma: Theme and Variations. An Introduction to Spectral Invariance and Its Applications},
  booktitle = {Four Short Courses on Harmonic Analysis},
  pages = {175--234},
  publisher = {Springer, Berlin},
  year = {2010}
}

@book{Kan,
  author = {Kaniuth, E.},
  title = {A Course in Commutative {B}anach Algebras},
  series = {Graduate Texts in Mathematics},
  volume = {246},
  publisher = {Springer},
  year = {2009}
}

@article{Krishtal2011,
  author = {Krishtal, I.},
  title = {{W}iener's Lemma: Pictures at an Exhibition},
  journal = {Rev. Unión Mat. Argent.},
  volume = {52},
  pages = {61--79},
  year = {2011}
}

@article{Ku,
  author = {Kuznetsova, Yulia and Molitor-Braun, Carine},
  title = {Harmonic Analysis of Weighted {$L^p$}-Algebras},
  journal = {Expo. Math.},
  volume = {30},
  number = {2},
  year = {2012},
  pages = {124--153},
  doi = {10.1016/j.exmath.2012.01.002},
  url = {https://doi.org/10.1016/j.exmath.2012.01.002}
}

@article{PelinovskyStefanov,
    author = {Pelinovsky, D. E. and Stefanov, A.},
    title = {On the spectral theory and dispersive estimates for a discrete {S}chrödinger equation in one dimension},
    journal = {Journal of Mathematical Physics},
    volume = {49},
    number = {11},
    pages = {113501},
    year = {2008},
    month = {11},
    issn = {0022-2488},
    doi = {10.1063/1.3005597},
}

@article{reich2015nonanalytic,
  author = {Maximilian Reich and Michael Reissig and Winfried Sickel},
  title  = {Non‑analytic superposition results on modulation spaces with subexponential weights},
  journal = { Journal of Pseudo-Differential Operators and Applications },
volume={7},
pages={365--409},
  year   = {2016},
urldoi={https://doi.org/10.1007/s11868-016-0148-x},
}

@article{ShinSun2013,
  author = {Shin, C. E. and Sun, Q.},
  title = {{W}iener's Lemma: Localization and Various Approaches},
  journal = {Appl. Math. A J. Chin. Univ.},
  volume = {28},
  number = {4},
  pages = {465--484},
  year = {2013}
}

@article{Sun2007,
  author = {Sun, Q.},
  title = {{W}iener's Lemma for Infinite Matrices},
  journal = {Trans. Amer. Math. Soc.},
  volume = {359},
  number = {7},
  pages = {3099--3123},
  year = {2007}
}

@article{Sun2011,
  author = {Sun, Q.},
  title = {{W}iener's Lemma for Infinite Matrices {II}},
  journal = {Constr. Approx.},
  volume = {34},
  pages = {209--235},
  year = {2011}
}

@article{SUN2005567,
  author = {Sun, Q.},
  title = {Wiener's lemma for infinite matrices with polynomial off-diagonal decay},
  journal = {Comptes Rendus Mathematique},
  volume = {340}, 
  number = {8}, 
  pages = {567--570},
  year = {2005}
}

@article{wi,
  author = {Wiener, N.},
  title = {Tauberian Theorems},
  journal = {Ann. of Math.},
  volume = {33},
  pages = {1--100},
  year = {1932}
}

@article{Zag,
  author = {Zagorodnyuk, A. V. and Mitrofanov, M. A.},
  title = {An Analog of {W}iener's Theorem for Infinite-Dimensional {B}anach Spaces ({R}ussian. {R}ussian Summary)},
  journal = {Mat. Zametki},
  volume = {97},
  number = {2},
  pages = {191--202},
  year = {2015}
}

@book{Ze,
  author = {\.{Z}elazko, W.},
  title = {Selected Topics in Topological Algebras},
  series = {Lecture Notes Series},
  number = {31},
  publisher = {Aarhus Univ.},
  year = {1971}
}

@article{Rauhut2007Coorbit,
  author    = {Holger Rauhut},
  title     = {Coorbit space theory for quasi‑{B}anach spaces},
  journal   = {Studia Mathematica},
  volume    = {180},
  number    = {3},
  pages     = {237--253},
  year      = {2007},
  doi       = {10.4064/sm180-3-4}
}

@article{Rauhut2007Wiener,
  author    = {Holger Rauhut},
  title     = {Wiener amalgam spaces with respect to quasi‑{B}anach spaces},
  journal   = {Colloquium Mathematicum},
  volume    = {109},
  number    = {2},
  pages     = {345--362},
  year      = {2007}
}

@article{BastianoniCordero2022,
  author    = {Federico Bastianoni and Elena Cordero},
  title     = {Quasi‑{B}anach modulation spaces and localization operators on locally compact abelian groups},
  journal   = {Banach Journal of Mathematical Analysis},
  volume    = {16},
  article   = {52},
  pages     = {1--71},
  year      = {2022},
  doi       = {10.1007/s43037-022-00205-6}
}

@article{GrochenigPfeufferToft2024,
  author    = {Karlheinz Gr{\"o}chenig and Christine Pfeuffer and Joachim Toft},
  title     = {Spectral invariance of quasi‑{B}anach algebras of matrices and pseudodifferential operators},
  journal   = {Forum Mathematicum},
  volume    = {36},
  number    = {5},
  pages     = {1201--1224},
  year      = {2024},
  doi       = {10.1515/forum-2023-0212}
}
\bibliographystyle{amsplain}

\end{document}